\newcolumntype{L}{>{\displaystyle}l}
\newcolumntype{C}{>{\displaystyle}c}
\newcolumntype{R}{>{\displaystyle}r}
\def \kp {\kappa}
\def \g {\gamma}
\def \a {\alpha}
\def \b {\beta}
\def \R {\mathbb{R}}
\def \E {\mathcal{E}}
\def \M {\mathcal{M}}
\def \t {\times}
\def \> {\geq}
\def \< {\leq}
\def \E {\mathcal{E}}
\def \lb {\lambda}
\def \l {\langle}
\def \r {\rangle}
\def \rt {\rightarrow}
\def \Rl {\mathcal{R}}
\def \ph {\varphi}
\newtheorem {theorem} {Theorem}
\newtheorem {proposition} [theorem] {Proposition}
\newtheorem {corollary} [theorem] {Corollary}
\newtheorem {definition} {Definition}
\newtheorem {remark} {Remark}
\begin{document}

\title[Geometric deformations of curves in the Minkowski plane]
{Geometric deformations of curves in the Minkowski plane}

\author[A.P. Francisco]
{Alex Paulo Francisco}

\subjclass[2010]{53A35, 53B30, 58K60}

\keywords{Minkowski plane, Bifurcations, Plane curves, Caustics}

\maketitle

\section*{\bf Abstract}

In this paper, we extend the method developed in \cite{mostafa-tese,
mostafa} to curves in the Minkowski plane. The method proposes a way
to study deformations of plane curves taking into consideration
their geometry as well as their singularities. We deal in detail
with all local phenomena that occur generically in $2$-parameters
families of curves. In each case, we obtain the geometry of the
deformed curve, that is, information about inflections, vertices and
lightlike points. We also obtain the behavior of the evolute/caustic
of a curve at especial points and the bifurcations that can occur
when the curve is deformed.

\smallskip

\section{\bf Introduction}


Consider the orthogonal projections of a curve $C$ in $\R^3$ to planes.
The $\mathcal{A}$-classification of the singularities of such
projections was obtained by David in \cite{david}. The equivalence
relation by the group $\mathcal{A}$ is very important when we study
singularities of projections of a space curve. However,
diffeomorphisms destroy the geometry of the projected curve. In
\cite{nunu,raul,wall} is studied the geometry of a germ or
multi-germ of a curve together with its contact with lines.

However it is also of interest to study the geometry of curves that
are deformed. This lead to the following question \cite{mostafa}: \textit{is there
a deformations theory which takes into account the deformations of
singularity of a plane curve as well as its geometry?} This question
is still an open problem. In \cite{mostafa-tese,mostafa},
Salarinoghabi and Tari proposed a method to study such deformations
of curves in the Euclidian plane. They named such method
\textit{Flat and Round Singularity theory for plane curves} (FRS). See also \cite{wall} for an alternative approach using invariants.

In this paper, we extend the method introduced in
\cite{mostafa-tese,mostafa} to deal with curves in the Minkowski
plane. We study all the phenomena of codimension less or equal to
$2$. In section $2$ we present some preliminary concepts about the
Minkowski plane; in section $3$ we study the stable behavior of the
evolute/caustic; in section $4$ we define the concepts of
\textit{FRLS}-equivalence and \textit{FRLS}-genericity (F for flat, R for round, L for
lightlike and S for singular). In the rest of this paper we study
the geometric deformations of plane curve at a vertex of order $2$,
an inflection of order $2$ and $3$ (section $5$), a lightlike
ordinary inflection and a lightlike inflection of order $2$ (section
$6$), a non-lightlike ordinary cusp (section $7$), a lightlike
ordinary cusp (section $8$) and a non-lightlike ramphoid cusp
(section $9$).

\section{\bf Curves in the Minkowski plane}

The Minkowski plane, denoted by $\R_1^2$, is the plane $\R^2$
endowed with the metric induced by the pseudo scalar product
$\langle u,v \rangle = -u_1v_1 + u_2v_2$, where $u = (u_1,u_2)$ and
$v = (v_1,v_2)$. We say that a non-zero vector $u\in\R_1^2$ is
\textit{spacelike} if $\langle u,u \rangle > 0$, \textit{lightlike}
if $\langle u,u \rangle = 0$ and \textit{timelike} if $\langle u,u
\rangle < 0$. Given a vector $u = (u_1,u_2)\in\R_1^2$, we denote by
$u^\perp$ the vector given by $u^\perp = (u_2,u_1)$ which is
orthogonal to $u$. If $u$ is lightlike, then $u^\perp= \pm u$ and if
$u$ is spacelike (resp. timelike), then $u^\perp$ is timelike (resp.
spacelike).

Let $\gamma:I\rightarrow\R_1^2$ be a smooth and regular curve. We
say that $\gamma$ is \textit{spacelike} (resp. \textit{timelike}) if
$\gamma^\prime(t)$ is a spacelike (resp. timelike) vector for all
$t\in I$. A point $\gamma(t)$ is called a \textit{lightlike point}
if $\gamma^\prime(t)$ is a lightlike vector.

We denote by $T(t)$ the unit tangent vector of $\g$ at a
non-lightlike point $t\in I$ and consider $N(t)$ the unit normal
vector of $\g$ at $t\in I$ such that $\{T(t),N(t)\}$ is a positively
oriented basis of $\R_1^2$, that is,
$$T(t) = \frac{\g^\prime(t)}{\parallel\g^\prime(t)\parallel} \ \ \  \text{and} \ \ \ N(t) = \frac{\pm\g^\prime(t)^\perp}{\parallel\g^\prime(t)\parallel},$$
where we use $+$ if $\g(t)$ is timelike and $-$ if $\g(t)$ is
spacelike. Note that if $\g(t)$ is spacelike (resp. timelike), then
$N(t)$ is timelike (resp. spacelike).

The curvature function $\kp:I\rightarrow\R_1^2$ of $\g$ at $t$ is given by
$$\kp(t) = \frac{\langle\g^{\prime\prime}(t),\g^\prime(t)^\perp\rangle}{|\langle\g^\prime(t),\g^\prime(t)\rangle|^\frac{3}{2}}.$$

We say that a point $\g(t_0)$ is a \textit{vertex} of $\g$ if
$\kp^\prime(t_0) = 0$. Moreover, a vertex is called an
\textit{ordinary vertex} if $\kp^\prime(t_0) = 0$ and
$\kp^{\prime\prime}(t_0)\neq 0$ and a \textit{vertex of order $k$}
(with $k\geq 2$) if $\kp^\prime(t_0) = ... = \kp^{(k)}(t_0) = 0$ and
$\kp^{(k+1)}(t_0)\neq 0$. A point $\g(t_0)$ is called an
\textit{inflection} of $\g$ if $\kp(t_0) = 0$. Moreover, an
\textit{inflection} is called an \textit{ordinary inflection} if
$\kp(t_0) = 0$ and $\kp^\prime(t_0) \neq 0$ and an \textit{inflection
of order $k$} (with $k\geq 2$) if $\kp(t_0) = ... = \kp^{(k-1)}(t_0)
= 0$ and $\kp^{(k)}(t_0)\neq 0$.

The contact of $\g$ with lines orthogonal to a non-zero vector
$v\in\R_1^2$ is captured by the singularities of the height function
$h_v:I\rightarrow\R$ given by $h_v(t) = \l\g(t),v\r$. We say that a
curve $\g$ has an \textit{$A_k$-contact} (resp. \textit{$A_{\geq
k}$-contact}) with the line $l_v(t_0) = \{p\in\R_1^2;\ \l p,v\r = \l
\g(t_0),v\r\}$ at $t_0$ if $h_v$ has an $A_k$-singularity (resp.
$A_{\geq k}$-singularity) at $t_0$.

Using the contact of $\g$ with its tangent line, we can define an
inflection in a general way (including at lightlike points) as
follows. A point $t_0$ is called an \textit{inflection} if $\g$ has
an $A_{\geq 2}$-contact with its tangent line at $t_0$. Moreover, an
inflection is called an \textit{ordinary inflection} (resp.
\textit{lightlike ordinary inflection}) if the contact is of order
$2$ and $t_0$ is not (resp. is) a lightlike point and an
\textit{inflection of order $k$} (resp. \textit{lightlike inflection
of order $k$}), with $k\geq 2$, if the contact is of order $k+1$ and
$t_0$ is not (resp. is) a lightlike point. The two definitions are
equivalents at non-lightlike points.

The \textit{evolute} of $\g$ at a non-lightlike point and away from
inflections is the curve in $\R_1^2$ parametrized by
$$e(t) = \g(t) - \frac{1}{\kp(t)}N(t).$$

\begin{proposition}\label{prop:evoluta-fora}
Let $\g:I\rt\R_1^2$ be a smooth and regular curve and let $t_0\in I$
be a point which is neither a lightlike point nor an inflection point.
Then, the evolute of $\g$ lies locally in the half-plane
determined by the tangent line of $\g$ at $t_0$ which does not
contain the curve $\g$.
\end{proposition}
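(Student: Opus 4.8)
The plan is to reduce the statement to a local computation by choosing coordinates adapted to the tangent line at $t_0$, and then comparing the positions of $\g$ and its evolute $e$ relative to that line via the sign of the height function $h_{N(t_0)}$.

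First I would set $v = N(t_0)$, so that the tangent line $l_v(t_0)$ is precisely the level set $\{p : \l p, v\r = \l \g(t_0), v\r\}$, and the function measuring signed distance from this line is $h_v(t) = \l \g(t), v\r$. Since $t_0$ is neither lightlike nor an inflection, $\{T(t_0), N(t_0)\}$ is a genuine positively oriented basis and the curvature $\kp(t_0)$ is nonzero. The claim that the evolute lies on the opposite side of the tangent line from the curve then becomes a comparison of two signed quantities: the sign of $\l e(t) - \g(t_0), v\r$ against the sign of $\l \g(t) - \g(t_0), v\r$ for $t$ near $t_0$. I would compute the leading behavior of each. For the curve, $h_v'(t_0) = \l \g'(t_0), N(t_0)\r = 0$ (tangency), and $h_v''(t_0) = \l \g''(t_0), N(t_0)\r$, which is proportional to $\kp(t_0)$ by the definition of the curvature function given above; hence $\g$ lies locally on one fixed side determined by the sign of $\kp(t_0)$. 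For the evolute, the key observation is that $e(t) - \g(t) = -\frac{1}{\kp(t)}N(t)$, so $\l e(t_0) - \g(t_0), N(t_0)\r = -\frac{1}{\kp(t_0)}\l N(t_0), N(t_0)\r$, and $\l N(t_0), N(t_0)\r = \pm 1$ is $-1$ when $\g(t_0)$ is timelike and $+1$ when spacelike.

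The heart of the argument is then to check that these two signs are genuinely opposite. I would carry out the sign bookkeeping in the two cases (spacelike and timelike $\g'(t_0)$) separately, because the normalization of $N$ and the value of $\l N, N\r$ differ between them, and because in the Minkowski setting a timelike normal contributes an extra sign relative to the Euclidean case. Writing $\|\g'(t_0)\|^2 = |\l \g'(t_0), \g'(t_0)\r|$ and using the explicit normalization $N = \mp \g'^\perp / \|\g'\|$ from the preliminaries, I would express both $h_v''(t_0)$ and $\l e(t_0) - \g(t_0), N(t_0)\r$ in terms of the same curvature numerator $\l \g''(t_0), \g'(t_0)^\perp\r$ and verify that the product of their signs is negative.

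The main obstacle I anticipate is precisely this sign matching in the pseudo-Euclidean metric: because $\l N, N\r$ can be negative and the normal vector carries a case-dependent sign, a naive transcription of the Euclidean argument could give the wrong half-plane. So I would be careful to track every sign through the definitions of $T$, $N$, and $\kp$ simultaneously, and confirm that the two case computations collapse to the same conclusion, namely that $\l e(t_0) - \g(t_0), N(t_0)\r$ and $h_v''(t_0)$ have opposite signs. Once the leading-order signs are shown opposite and $\kp(t_0) \neq 0$ guarantees $h_v''(t_0) \neq 0$ (so $\g$ stays strictly on one side near $t_0$), a continuity argument places $e(t)$ on the opposite half-plane for $t$ near $t_0$, completing the proof.
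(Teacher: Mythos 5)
Your overall strategy is sound, and it is essentially the argument the paper leaves implicit: the paper's proof of this proposition is a one-line reference to the Euclidean case (Tenenblat, p.~47), so your plan of comparing the sign of the height function $h_{N(t_0)}$ along the curve with the sign of $\langle e(t)-\g(t_0),N(t_0)\rangle$, with a case-by-case treatment of the spacelike/timelike possibilities, is the natural way to fill in the details. Your closing continuity step is also fine: since $\kp(t_0)\neq 0$ and $\g'(t_0)$ is non-lightlike, the evolute is defined for $t$ near $t_0$, and a nonzero value of the linear functional $\langle\,\cdot\,-\g(t_0),N(t_0)\rangle$ at $t_0$ persists nearby.

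However, there is a concrete sign error at the heart of the very bookkeeping you rightly flag as the main obstacle: your stated values of $\langle N,N\rangle$ are swapped. From $u^\perp=(u_2,u_1)$ one gets $\langle u^\perp,u^\perp\rangle=-\langle u,u\rangle$, hence $\langle N,N\rangle=-\langle\g',\g'\rangle/|\langle\g',\g'\rangle|$, which is $+1$ when the curve is \emph{timelike} (then $N$ is spacelike, as the paper's Section 2 states) and $-1$ when it is \emph{spacelike} (then $N$ is timelike) --- the opposite of what you wrote. This is not harmless: running your final check with your values in the timelike case gives $h_v''(t_0)=\langle\g''(t_0),N(t_0)\rangle=+\kp(t_0)\|\g'(t_0)\|^2$ and $\langle e(t_0)-\g(t_0),N(t_0)\rangle=-\frac{1}{\kp(t_0)}\cdot(-1)=\frac{1}{\kp(t_0)}$, whose product is $+\|\g'(t_0)\|^2>0$, i.e., you would conclude the evolute lies on the \emph{same} side as the curve, contradicting the proposition (the spacelike case fails the same way). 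With the correct values the computation closes: since $N=\pm\g'^\perp/\|\g'\|$ with $+$ in the timelike and $-$ in the spacelike case, one has $h_v''(t_0)=\pm\kp(t_0)\|\g'(t_0)\|^2$ with the same case-dependent sign, while $\langle e(t_0)-\g(t_0),N(t_0)\rangle=-\langle N,N\rangle/\kp(t_0)$, and the product equals $-\|\g'(t_0)\|^2<0$ in both cases. So the plan is the right one, but as written the key sign check would fail; fix the values of $\langle N,N\rangle$ and record the case-dependent sign relating $h_v''(t_0)$ to $\kp(t_0)$.
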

\begin{proof} It follows similarly to that of curves in the Euclidian case (\cite{Keti}, p. 47).
\end{proof}

The family of distance squared function on a smooth and regular curve
$\g:I\rt\R_1^2$ is the family of maps $d:I\t\R_1^2\rt\R$ given by
$d(t,u) = \l\g(t)-u,\g(t)-u\r$. The \textit{caustic} of $\g$ is the
bifurcation set of the family $d$,
that is
$$Bif(d) = \{u\in\R_1^2;\ \exists\ t\in I \ \text{ such \ that} \ d_u^\prime(t) = d_u^{\prime\prime}(t) = 0\}$$
and is a locus of centers of pseudo circles which have an $A_{\geq
2}$-contact with $\g$. For more details about caustics and its
singularities see, for example,
\cite{Arnold72,Arnold,Saloom-tese,Minkowski,zak76}.

We have
$$Bif(d) = \{\g(t) - \lambda\g^\prime(t)^\perp; \ \text{such that} \  \lambda\l\g^\prime(t)^\perp,\g^{\prime\prime}(t)\r +
\l\g^\prime(t),\g^\prime(t)\r = 0,  \ t\in I\ ,  \lambda\in\R \}.$$ 

The caustic of $\g$
is well defined at lightlike and singular points and coincide with
the evolute away from such points (\cite{Fukunaga-takahashi, Minkowski}).

\section{\bf Stable configurations of the caustic}

The stable phenomena on curves in the Minkowski plane are lightlike points, ordinary vertices and
inflections. In this section we describe the configurations of the
caustic at such points. All figures are draw in blue and red, blue
for the curve and red for its caustic. Moreover, we represent
lightlike points, vertices and inflections, respectively, by stars,
discs and squares.

We start with lightlike points. For this we consider $\Omega$ the
set of smooth and regular curves $\g:S^1\rt \R_1^2$ which satisfies
$\langle\gamma^{\prime\prime}(t),\gamma^\prime(t)\rangle \neq 0$
when $\langle\gamma^\prime(t),\gamma^\prime(t)\rangle = 0$, that is,
their lightlike points are not lightlike inflections.

\begin{proposition} {\rm \text{(\cite{Minkowski})}}
Let $\g\in\Omega$. Then the caustic of $\g$ is a regular curve at
the lightlike point of $\g$ and it has an ordinary tangency with
$\g$ at such points. Moreover, $\g$ and its caustic lie locally
at opposite sides of the common tangent line at the lightlike point.
\end{proposition}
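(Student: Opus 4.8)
The plan is to work directly with the explicit parametrization of the caustic recalled above, namely (writing $t_0$ for the lightlike point)
$$c(t) = \g(t) - \lb(t)\g'(t)^\perp, \qquad \lb(t) = -\frac{\l\g'(t),\g'(t)\r}{\l\g'(t)^\perp,\g''(t)\r}.$$
The first thing I would check is that this is a genuine smooth local parametrization of the caustic through $\g(t_0)$. Since $\g'(t_0)$ is lightlike, $\g'(t_0)^\perp = \e\,\g'(t_0)$ with $\e = \pm 1$, so $\l\g'(t_0)^\perp,\g''(t_0)\r = \e\,\l\g'(t_0),\g''(t_0)\r$, which is nonzero precisely because $\g\in\Omega$. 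Hence the denominator of $\lb$ does not vanish near $t_0$, $\lb$ is smooth there, and $\lb(t_0) = 0$ because $\g'(t_0)$ is lightlike; in particular $c(t_0) = \g(t_0)$.

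For regularity and tangency I would compute $c'(t_0)$. Differentiating and using $\lb(t_0)=0$ gives $c'(t_0) = \g'(t_0) - \lb'(t_0)\g'(t_0)^\perp$. A short computation of $\lb'(t_0)$ (only the derivative of the numerator survives, since $\l\g',\g'\r$ vanishes at $t_0$) together with the identity $\g'(t_0)^\perp = \e\,\g'(t_0)$ collapses this to a nonzero scalar multiple of $\g'(t_0)$, the multiple coming out to $3$. This single line gives two conclusions at once: $c'(t_0)\neq 0$, so the caustic is regular at $t_0$; and $c'(t_0)$ is parallel to $\g'(t_0)$, so $\g$ and its caustic are tangent at $\g(t_0)$ and share the tangent line there.

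The remaining assertions, ordinary tangency and opposite sides, I would extract from a single second-order comparison. The key simplification, special to the lightlike case, is that the common tangent line is exactly the zero set of the \emph{linear} functional $H(p) = \l p - \g(t_0),\g'(t_0)\r$: indeed, because $\g'(t_0)$ is lightlike one has $\g'(t_0)^\perp = \e\,\g'(t_0)$, so the normal to the tangent line and the tangent direction are collinear and $H$ plays the role of a signed height over the tangent line. Taylor expanding, $H(\g(t)) = \tfrac12\l\g''(t_0),\g'(t_0)\r(t-t_0)^2 + O((t-t_0)^3)$ (the linear term drops out since $\g'(t_0)$ is lightlike), while a parallel expansion of $H(c(t))$, using $\lb(t_0)=0$ and the derivatives found above, yields a quadratic whose leading coefficient has the \emph{opposite} sign. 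Since both height functions have nondegenerate quadratic leading terms of opposite sign, $\g$ and $c$ lie locally on opposite sides of their common tangent line, and --- passing to graphs over the tangent direction via $c'(t_0) = 3\,\g'(t_0)$ --- their second-order coefficients remain distinct, which is exactly ordinary (first-order) tangency.

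I expect the only delicate point to be the bookkeeping around the lightlike direction: one must resist using the unit normal $N$ (undefined at $t_0$) and instead exploit $\g'(t_0)^\perp = \pm\g'(t_0)$ throughout, both to see that $\lb$ stays smooth (this is where $\g\in\Omega$ enters decisively) and to reduce every inner product evaluated at $t_0$ to a multiple of $\l\g''(t_0),\g'(t_0)\r$. Once that reduction is made, all three claims fall out of the signs and magnitudes of the two quadratic coefficients.
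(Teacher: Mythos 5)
Your proposal is correct, and a comparison with ``the paper's proof'' is largely moot here: the paper states this proposition as imported from \cite{Minkowski} and gives no internal proof, so yours is the only argument on the table and I checked it in detail. With $\lb(t)=-\l\g'(t),\g'(t)\r/\l\g'(t)^\perp,\g''(t)\r$ and $\g'(t_0)^\perp=\e\,\g'(t_0)$, $\e=\pm1$, the hypothesis $\g\in\Omega$ keeps the denominator nonvanishing near $t_0$; since $\l\g',\g'\r$ vanishes at $t_0$, indeed $\lb(t_0)=0$ and $\lb'(t_0)=-2\l\g''(t_0),\g'(t_0)\r/\big(\e\l\g''(t_0),\g'(t_0)\r\big)=-2\e$, whence $c'(t_0)=\g'(t_0)-\lb'(t_0)\g'(t_0)^\perp=3\g'(t_0)$, exactly the factor $3$ you claim; this gives regularity and tangency simultaneously. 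For $H(p)=\l p-\g(t_0),\g'(t_0)\r$, whose zero set is the tangent line precisely because the orthogonal complement of a lightlike vector is its own span, one gets $H(\g(t))=\tfrac{1}{2}A(t-t_0)^2+O((t-t_0)^3)$ with $A=\l\g''(t_0),\g'(t_0)\r\neq0$, while the correction term $\lb(t)\l\g'(t)^\perp,\g'(t_0)\r$ contributes $2A(t-t_0)^2+O((t-t_0)^3)$ (using $\l a^\perp,b\r=-\l a,b^\perp\r$), so $H(c(t))=-\tfrac{3}{2}A(t-t_0)^2+O((t-t_0)^3)$: nonzero coefficients of opposite sign, hence opposite sides. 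Passing to graphs over the tangent direction, the quadratic coefficients become $A/(2\alpha^2)$ for $\g$ and $-A/(6\alpha^2)$ for $c$ (the factor $3$ in $c'(t_0)$ entering as $1/9$), which are distinct, so the tangency is ordinary; the model $\g(t)=(t,t+t^2)$ confirms all constants, with $c(t)=(3t+6t^2+4t^3,\,3t+3t^2)$, $H(\g(t))=t^2$ and $H(c(t))=-3t^2-4t^3$.

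Two small points are worth making explicit in a final write-up. First, near $t_0$ the bifurcation set is exactly the image of $c$, since for each $t$ near $t_0$ the defining equation $\lb\l\g'(t)^\perp,\g''(t)\r+\l\g'(t),\g'(t)\r=0$ has the unique solution $\lb=\lb(t)$ (nonvanishing denominator); this justifies ``the caustic \emph{is} a regular curve'' locally, rather than merely containing one. Second, your route differs from the cited source, which works in a Monge-type local form at the lightlike point and reads the caustic off the family of distance-squared functions; your version is parametrization-invariant and isolates the single invariant $A=\l\g''(t_0),\g'(t_0)\r$ that controls all three conclusions, at the cost of the sign bookkeeping with $\e$ that you already flagged.
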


The following result gives the configuration of the evolute at an
ordinary vertex.

\begin{proposition}\label{prop:vert-ord}
Let $\gamma:I\rightarrow\R_1^2$ be a smooth and regular curve.
Suppose $t_0\in I$ is an ordinary vertex of $\gamma$ which is not an
inflection. Then, locally at $t_0$, the configuration of $\gamma$
and its evolute is as in Figure \ref{fig:vert.ord} left if
$\kp(t_0)\kp^{\prime\prime}(t_0)<0$ and right if
$\kp(t_0)\kp^{\prime\prime}(t_0)>0$.
\end{proposition}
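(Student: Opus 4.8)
The plan is to analyze the evolute $e(t) = \g(t) - \frac{1}{\kp(t)}N(t)$ locally near $t_0$ by computing its Taylor expansion, and to extract from the leading terms both the nature of the singularity of $e$ at $t_0$ and the side on which the branches of $e$ lie relative to $\g$. First I would set up adapted coordinates: reparametrize by arc length (possible since $t_0$ is not a lightlike point, so $\langle\g',\g'\rangle\neq 0$ near $t_0$) and place $\g(t_0)$ at the origin with $\{T(t_0),N(t_0)\}$ as the coordinate frame. Since $t_0$ is not an inflection we have $\kp(t_0)\neq 0$, so $e$ is well defined near $t_0$, and since $t_0$ is an ordinary vertex we have $\kp'(t_0)=0$, $\kp''(t_0)\neq 0$.

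Next I would differentiate $e(t)$. Using the Minkowski Frenet-type equations for $T'$ and $N'$ in terms of $\kp$, the first derivative collapses: $e'(t) = \frac{\kp'(t)}{\kp(t)^2}\,N(t)$ (the tangential part cancels by the definition of curvature, exactly as in the Euclidean case). Hence $e'(t_0)=0$ because $\kp'(t_0)=0$, confirming that the evolute is singular at an ordinary vertex. Differentiating once more and evaluating at $t_0$ gives $e''(t_0)$ proportional to $\frac{\kp''(t_0)}{\kp(t_0)^2}N(t_0)$, which is nonzero; computing $e'''(t_0)$ then produces a component along $T(t_0)$. This shows $e$ has an ordinary cusp at $t_0$ with the cusp tangent directed along $N(t_0)$, and the sign of $\kp''(t_0)$ together with the sign of $\kp(t_0)$ controls the orientation of the cusp relative to the normal direction.

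The decisive step is reading off the geometry. The displacement of the evolute from $\g$ is $-\frac{1}{\kp(t)}N(t)$, so the side of the tangent line at $t_0$ on which $e$ sits is governed by the sign of $-1/\kp(t_0)$ times the orientation of $N(t_0)$; Proposition~\ref{prop:evoluta-fora} already pins this down (the evolute lies on the side opposite to the curve). What the two cases in the figure distinguish is the direction in which the cusp of $e$ opens, and this is exactly the sign of the product $\kp(t_0)\kp''(t_0)$: the $N(t_0)$-component of $e''(t_0)$ carries the factor $\kp''(t_0)/\kp(t_0)^2$ while the relevant first nonvanishing displacement carries $-1/\kp(t_0)$, so the relative configuration flips according to whether $\kp(t_0)$ and $\kp''(t_0)$ have the same or opposite signs. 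I would then match each sign possibility to the left/right picture.

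**The main obstacle** I anticipate is bookkeeping with the Minkowski metric rather than anything conceptual: the normal $N(t)$ is defined with a sign depending on whether $\g$ is spacelike or timelike, and $\langle N,N\rangle=\pm1$ rather than $+1$, so the Frenet relations $T'=?\,N$, $N'=?\,T$ acquire extra signs compared to the Euclidean case. I must track these causal signs carefully so that the curvature term in $e'(t)$ still cancels and so that "opposite sides of the tangent line" is interpreted consistently in the indefinite metric. Once the signs in the frame equations are fixed, the Taylor computation and the sign analysis of $\kp(t_0)\kp''(t_0)$ are routine, and the two configurations follow as in the Euclidean treatment (\cite{Keti}).
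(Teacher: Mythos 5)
Your proposal is correct and follows essentially the same route as the paper: arc-length parametrization, computation of $e''(t_0)=\frac{\kp''(t_0)}{\kp(t_0)^2}N(t_0)$ and $e'''(t_0)$ to exhibit the ordinary cusp with tangent along $N(t_0)$, and reading the cusp's orientation relative to the curve from the sign of $\kp(t_0)\kp''(t_0)$ via the displacement $-\frac{1}{\kp(t_0)}N(t_0)$. Your explicit flagging of the Minkowski Frenet signs (here $N'=\kp T$, so the tangential term in $e'$ still cancels) is exactly the bookkeeping the paper leaves implicit, and your use of Proposition~\ref{prop:evoluta-fora} is a harmless addition to the same argument.
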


\begin{figure}[h!]
\centering
\includegraphics[scale = 0.9]{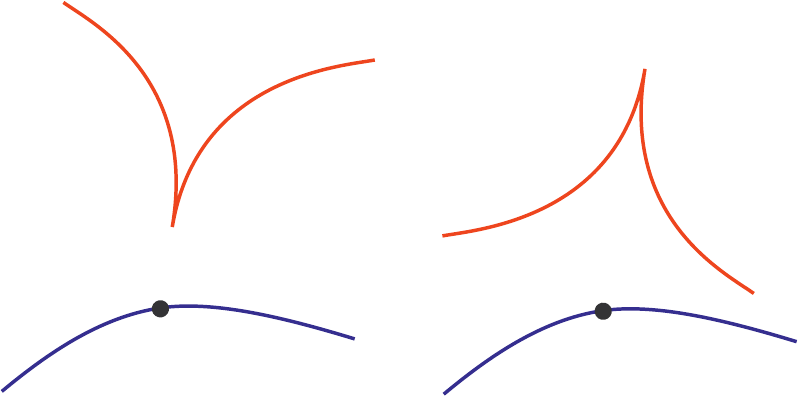}
\caption{\small{Behavior of the evolute at an ordinary
vertex.}}\label{fig:vert.ord}
\end{figure}

\begin{proof}
We take $\gamma$ parametrized by arc-length. Differentiating the
parametrization $e(t) = \gamma(t) - \frac{1}{\kp(t)}N(t)$ of the
evolute three times and evaluating at $t_0$ we get\linebreak
$\displaystyle e^{\prime\prime}(t_0) =
\frac{\kp^{\prime\prime}(t_0)}{(\kp(t_0))^2}N(t_0)$ and $\displaystyle
e^{\prime\prime\prime}(t_0) =
\frac{2\kp^{\prime\prime}(t_0)}{\kp(t_0)}T(t_0) +
\frac{\kp^{\prime\prime\prime}(t_0)}{(\kp(t_0))^2}N(t_0)$. Hence, in the coordinate system with basis $\{T(t_0),N(t_0)\}$, we
can write the $3$-jet of $e(t)-e(t_0)$ at $t_0$ in the form
$$j^3(e(t) - e(t_0)) = (\frac{\kp^{\prime\prime}(t_0)}{3\kp(t_0)}(t-t_0)^3,\frac{\kp^{\prime\prime}(t_0)}{2(\kp(t_0))^2}(t-t_0)^2 + \frac{\kp^{\prime\prime\prime}(t_0)}{6(\kp(t_0))^2}(t-t_0)^3).$$

We have two possibilities at the cusp of the evolute: turning
towards the curve or away from it (see Figure \ref{fig:vert.ord}).
The cusp of the evolute turns towards (resp. away from) the curve
when $\kp(t_0)\kp^{\prime\prime}(t_0) > 0$ (resp.
$\kp(t_0)\kp^{\prime\prime}(t_0) < 0$).

%
\end{proof}

\begin{definition}
An ordinary vertex $t_0\in I$ is called an \textit{inward vertex} if
$\kp(t_0)\kp^{\prime\prime}(t_0)>0$ and an \textit{outward vertex} if
$\kp(t_0)\kp^{\prime\prime}(t_0)<0$.
\end{definition}

The next result gives the configuration of the evolute at an
ordinary inflection.

\begin{proposition}\label{prop:inf-ord}
Let $\gamma:I\rightarrow\R_1^2$ be a smooth and regular curve.
Suppose that $\gamma$ has an ordinary inflection at $t_0$. Then the
evolute of $\gamma$ tends to infinity asymptotically along the
normal to $\gamma$ at $t_0$ and can be modeled by $xy = 1$ in some
coordinate system. The two components of the evolute lie in the two
quadrants determined by the tangent and normal lines of $\gamma$ at
$t_0$ which does not contain the curve $\gamma$, see Figure
\ref{fig:inf.ord}.
\end{proposition}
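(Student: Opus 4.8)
The plan is to compute the asymptotic behaviour of the evolute near $t_0$ directly from its parametrization $e(t)=\gamma(t)-\frac{1}{\kp(t)}N(t)$, exploiting that at an ordinary inflection $\kp(t_0)=0$ while $\kp'(t_0)\neq 0$, so that $1/\kp$ has a simple pole at $t_0$ and the evolute escapes to infinity. As in the proof of Proposition \ref{prop:vert-ord} I would take $\gamma$ parametrized by arc length and work in the coordinate system centred at $\gamma(t_0)$ with basis $\{T(t_0),N(t_0)\}$. The first task is to record the Frenet equations: a short computation using $\langle T,T\rangle=\pm1$, $\langle N,N\rangle=\mp1$ and $\langle T,N\rangle=0$ shows that in the Minkowski plane $T'=\kp N$ and $N'=\kp T$ for both causal characters, so the computation is uniform whether $\gamma$ is spacelike or timelike at $t_0$ (and an ordinary inflection is not a lightlike point).

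Next I would expand everything to the needed order. Since $\kp(t_0)=0$, the Frenet equations give $\gamma''(t_0)=0$ and $\gamma'''(t_0)=\kp'(t_0)N(t_0)$, so writing $s=t-t_0$ the curve has the local form $(s,\frac{\kp'(t_0)}{6}s^3)+O(s^4)$, a cubic with an inflection as expected. Similarly $N'(t_0)=0$ and $N''(t_0)=\kp'(t_0)T(t_0)$ give $N(t)=(\frac{\kp'(t_0)}{2}s^2,1)+O(s^3)$, and from $\kp(t)=\kp'(t_0)s+O(s^2)$ one obtains the Laurent expansion $\frac{1}{\kp(t)}=\frac{1}{\kp'(t_0)s}+O(1)$. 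Substituting these into $e(t)$ and collecting leading terms yields, in the chosen coordinates, $e(t)-\gamma(t_0)=\left(\frac{s}{2}+O(s^2),\,-\frac{1}{\kp'(t_0)s}+O(1)\right)$.

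From this the three assertions follow. The $x$-coordinate tends to $0$ and the $y$-coordinate to $\pm\infty$ as $s\to 0^{\pm}$, so the evolute escapes to infinity asymptotically along the normal $N(t_0)$. The product of the two leading terms is the nonzero constant $-\frac{1}{2\kp'(t_0)}$, so $e$ is asymptotic to a rectangular hyperbola $xy=\text{const}$; rescaling (and, if necessary, reflecting) the axes normalizes this to the model $xy=1$. Finally, I would read off the signs of the two coordinates on the branches $s>0$ and $s<0$ and compare them with the signs on the cubic $y=\frac{\kp'(t_0)}{6}x^3$: in each of the two cases $\kp'(t_0)>0$ and $\kp'(t_0)<0$ the two branches of the evolute occupy exactly the pair of quadrants (cut out by the tangent and normal lines) that the cubic avoids, which is the placement claimed in Figure \ref{fig:inf.ord}.

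The computation is otherwise routine; the one point demanding care is the bookkeeping of signs in the Minkowski Frenet frame and in the orientation convention for $N(t)$, since these control both the direction of escape and the quadrant assignment, and an error there would flip the configuration. I would therefore verify the Frenet relations and the sign of the leading $y$-coefficient explicitly before drawing the quadrant conclusion.
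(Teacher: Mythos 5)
Your proof is correct, and it arrives at exactly the expansion the paper obtains, but by a genuinely different computational route. The paper works in Monge form: it treats the timelike case, writes $\gamma(t)=(t,c_3t^3+O(t^4))$ with $c_3\neq 0$ at $t_0=0$, computes the evolute directly in these coordinates to get $e(t)=(\frac{1}{2}t+O(t^2),\frac{1}{6c_3t}(-1+O(t)))$, and declares the spacelike case analogous. You instead parametrize by arc length and expand via the Minkowski Frenet equations. Your key structural observation --- that in $\R_1^2$ one has $T'=\kappa N$ and $N'=\kappa T$ with the \emph{same} sign for both causal characters (unlike the Euclidean $N'=-\kappa T$), under the paper's conventions for $N$ and $\kappa$ --- is correct, and it is precisely what lets you avoid the timelike/spacelike case split; that uniformity is a modest but real gain over the paper's argument. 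Your resulting expansion $e(t)-\gamma(t_0)=\left(\frac{s}{2}+O(s^2),\,-\frac{1}{\kappa'(t_0)s}+O(1)\right)$ matches the paper's formula, since in the Monge normalization $\kappa'(0)=6c_3$, so the paper's $-\frac{1}{6c_3t}$ is your $-\frac{1}{\kappa'(t_0)s}$. The sign bookkeeping you flag also checks out: for $\kappa'(t_0)>0$ the curve $y=\frac{\kappa'(t_0)}{6}x^3+O(x^4)$ lies locally in the quadrants with $xy>0$, while the two evolute branches have $x\approx s/2$ and $y\approx -1/(\kappa'(t_0)s)$, hence $xy<0$; the case $\kappa'(t_0)<0$ is symmetric, so in both cases the evolute occupies exactly the complementary pair of quadrants, as claimed. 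The one step you leave at the same level of informality as the paper is the passage from the asymptotic relation $xy\to-\frac{1}{2\kappa'(t_0)}$ to the assertion that the evolute ``can be modeled by $xy=1$ in some coordinate system''; the paper's own proof is no more precise there, so this is not a gap relative to it.
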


\begin{figure}[h!]
\centering
\includegraphics[scale = 0.8]{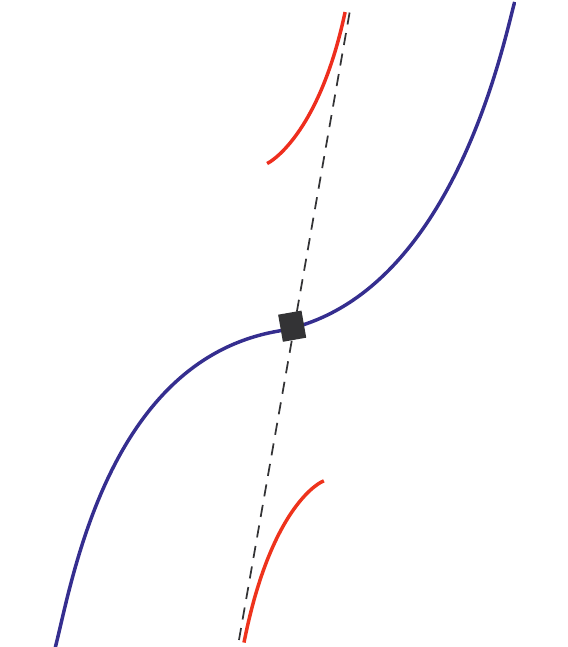}
\caption{\small{Evolute at an ordinary
inflection.}}\label{fig:inf.ord}
\end{figure}

\begin{proof}
We consider the case when $\g$ is a timelike curve, the spacelike case is analogous and is omitted. We
can suppose that $t_0 = 0$ and $\g(t) = (t,f(t)) = (t,c_2t^2 +
c_3t^3 + O(t^4))$. As $t_0 = 0$ is an ordinary inflection of $\g$,
we have $\kp(0) = 0$ and $\kp^\prime(0)\neq 0$, that is, $c_2 = 0$ and
$c_3\neq 0$. Thus, the expression of the evolute is
$$e(t) = (\frac{1}{2}t + O(t^2),\frac{1}{6c_3t}(-1 + O(t))),$$
and the results follows.
\end{proof}

\section{\bf \textit{FRLS}-equivalence: general concepts}

We present a method for studying the geometry of deformations of
plane curves in the Minkowski plane, which we denote by
\textit{FRLS-deformations} (F for Flat, R for Round, L for Lightlike
and S for Singular). The method is an extension of that in
\cite{mostafa}; here we need to consider, additionally, the lightlike points.

\begin{definition}\label{def:FRLS-equivalent} {\rm (Compare with Definition 1.1 in \cite{mostafa})}
Consider two germs of $m$-parameter deformations $\g_s$ and
$\alpha_u$ of a curve in $\R_1^2$ and consider the parameter space
of $\g_s$ endowed with a stratification $S_1$ such that if $s_1$ and
$s_2$ belong to the same stratum, then $\g_{s_1}$ and $\g_{s_2}$
\begin{itemize}
\item[\text{(i)}] are diffeomorphics;

\item[\text{ (ii)}] have the same number of inflections, vertices and lightlike points;

\item[\text{(iii)}] have the same relative position of their singularities, self-intersection points, inflections, vertices and lightlike points.
\end{itemize}
Consider also the parameter space of $\a_u$ endowed with another
stratification $S_2$ satisfying properties {\rm(i)-(iii)}. We say
that $\g_s$ and $\a_u$ are \textit{FRLS-equivalents} if there is a
germ of homeomorphism $k:\R^m,(S_1,0)\rightarrow\R^m,(S_2,0)$ such
that $\g_s$ and $\a_{k(s)}$ satisfies {\rm (i)-(iii)}.
\end{definition}

We can define the notion of \textit{FRLS}-equivalence of deformations of a
plane curve with different number of parameters. We say that an
$m$-parameter deformation $\g_s$ and an $n$-parameter deformation
$\alpha_u$ (say $n\geq m$) of a plane curve are \textit{FRLS}-equivalent if
$\widetilde{\g}_s$ and $\alpha_u$ are \textit{FRLS}-equivalents in the above
sense, where $\widetilde{\g}_s$ is an $n$-parameter deformation
given by
$\widetilde{\g}_s(t,s_1,...,s_m,\widetilde{s}_{m+1},...,\widetilde{s}_n)
= \g_s(t,s_1,...,s_m)$.

In each case that we deal with, the stratification of the parameter
space is obtained from a stratification in the jet or multi-jet
space. The stratification of the jet or multi-jet space is given by
equations which determine each phenomenon. The notation that we use
for local strata are $I(k), V(k), LI(k)$ for inflections, vertices
and lightlike inflections of order $k$, respectively, (or $I, V, LI$
for the ordinary case), $L$ for lightlike points, $C$ for cusps,
$LC$ for lightlike cusps and $RC$ for ramphoid cusps. The
multi-local strata are denoted by $IT, VT, LT$ for points of transverse
intersection between two branches where one of them is an
inflection, a vertex, a lightlike point, respectively, and $Tc$
(Tacnode) for points of tangential intersection between two
branches. To obtain such stratification, we use the Monge-Taylor
map, which we define as follows.

Let $\g(t) = (\a(t),\b(t))$ be a plane curve. At each point $t_0$ in
the source of $\g$ we write
\begin{eqnarray*}
j^k_{t_0}\g(t) &=& (\a(t_0)+\a^\prime(t_0)(t-t_0) +
\frac{1}{2!}\a^{\prime\prime}(t_0)(t-t_0)^2 \dots +
\frac{1}{k!}\a^{(k)}(t_0)(t-t_0)^k;\\ & &
\b(t_0)+\b^\prime(t_0)(t-t_0) +
\frac{1}{2!}\b^{\prime\prime}(t_0)(t-t_0)^2 \dots
+\frac{1}{k!}\b^{(k)}(t_0)(t-t_0)^k).
\end{eqnarray*}

We take $\g(t_0) = (\a(t_0),\b(t_0)) = (0,0)$. We define the
\textit{Monge-Taylor map} as the map $j^k\phi_\g:\R,0\rt J^k(1,2)$
given by $j^k\phi_\g(t) = (j^k_t \a,j^k_t\b)$. Identifying
$J^k(1,2)$ with $\R^k\t\R^k$, we write
\begin{equation}\label{Monge-Taylor:sing}
j^k\phi_\g(t) =
(\a^\prime(t),\frac{1}{2!}\a^{\prime\prime}(t),\dots,
\frac{1}{k!}\a^{(k)}(t); \b^\prime(t),
\frac{1}{2!}\b^{\prime\prime}(t),\dots, \frac{1}{k!}\b^{(k)}(t)).
\end{equation}

We can simplify the map $j^k\phi_\g$ depending on the case in consideration:
\begin{itemize}
\item[(1)] If $t = 0$ is a regular but not a lightlike point of $\g$, then we
take $\g(t) = (t,\b(t))$, with $\b^\prime(0) = 0$, for the
timelike case and $\g(t) = (\a(t),t)$, with $\a^\prime(0) = 0$, for
the spacelike case. The Monge-Taylor is taken as the map $j^k\phi_\g:\R,0\rt J^k(1,1)$ given by
\begin{eqnarray}\label{Monge-Taylor:time}
j^k\phi_\g(t) &=& (\frac{1}{2!}\b^{\prime\prime}(t),\dots, \frac{1}{k!}\b^{(k)}(t)), \ \ \text{if} \ \g \ \text{is \ timelike}\\
j^k\phi_\g(t) &=& (\frac{1}{2!}\a^{\prime\prime}(t),\dots,
\frac{1}{k!}\a^{(k)}(t)), \ \ \text{if} \ \g \ \text{is \
spacelike}\nonumber
\end{eqnarray}

\item[(2)] If $t = 0$ is a lightlike regular point of $\g$ we can take $\g(t) = (t,\b(t))$, with
$\b^\prime(0) = \pm 1$. In this case, the Monge-Taylor map is taken as the map \linebreak $j^k\phi_\g:\R,0\rt J^k(1,1)$ given by
\begin{equation}\label{Monge-Taylor:light}
j^k\phi_\g(t) =
(\b^\prime(t),\frac{1}{2!}\b^{\prime\prime}(t),\dots,
\frac{1}{k!}\b^{(k)}(t)).
\end{equation}

\item[(3)] If $\g$ is singular at $t = 0$, then the Monge-Taylor map is taken as in (\ref{Monge-Taylor:sing}).
\end{itemize}

The stratification in the space $ _2J^k(1,2)\subset
J^k(1,2)\t J^k(1,2)$ of bi-jets (see \cite{Livro-Farid}, p. 47) is used when
$\g_s$ has self-intersection points. We define
the \textit{Monge-Taylor bi-jet map} as the map $
_2j^k\phi_\g:\R\t\R,(0,0)\rt\,  _2J^k(1,2)$ given by
\begin{eqnarray}\label{Monge-Taylor:multi}
_2j^k\phi_\g(t_1,t_2) &=& \big((\a(t_1),\a^\prime(t_1),\dots,\frac{1}{k!}\a^{(k)}(t_1); \b(t_1),\b^\prime(t_1),\dots,\frac{1}{k!}\b^{(k)}(t_1)),\nonumber\\
& & (\a(t_2),\a^\prime(t_2),\dots,\frac{1}{k!}\a^{(k)}(t_2);
\b(t_2),\b^\prime(t_2),\dots,\frac{1}{k!}\b^{(k)}(t_2))\big).
\end{eqnarray}
We can define in the analogous way the Monge-Taylor n-jet map, for
$n\> 3$.

\begin{definition}\label{def:FRLS-def.}
We say that a germ of an $m$-parameter deformation $\g_s$ of a plane
curve $\g = \g_0$ is an \textit{FRLS-generic} deformation if the
family of Monge-Taylor maps induced by $\g_s$ is transverse to the
stratification of the jet space determined by the $\mathcal{A}$-invariant strata and the geometric strata $I(k), V(k), LI(k)$, and their intersections.
\end{definition}

\begin{theorem}\label{teo:IL-V}
In $J^n(1,2)$, the stratum of lightlike inflections of order $k$ is
contained in the stratum of vertices of order $2k$, that is,
$LI(k)\subset V(2k)$, for any integer $k\> 1$. Moreover, $2k$ is the
largest integer with such property, that is, $LI(k)\subset
V(2k)\backslash V(2k+1)$.
\end{theorem}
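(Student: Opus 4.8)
The plan is to reduce to a local normal form at the lightlike point and to replace the singular vertex condition $\kp'=0$ by an equivalent \emph{polynomial} equation in the jet, whose order of vanishing can then be read off directly. Since the statement is local and the two lightlike directions are interchanged by the Minkowski isometry $(u_1,u_2)\mapsto(u_1,-u_2)$, I would work with the normal form of case (2), writing $\g(t)=(t,\b(t))$ with $\b(0)=0$ and $\b^\prime(0)=1$ (the case $\b^\prime(0)=-1$ follows by this symmetry). The height function in the tangent direction $v=\g^\prime(0)^\perp=(1,1)$ is $h_v(t)=\l\g(t),v\r=\b(t)-t$, so its Taylor coefficients from order $2$ on are exactly those of $\b$. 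Hence $t_0=0$ is a lightlike inflection of order $k$ (an $A_{k+1}$-contact with the tangent line) precisely when
$$\b^{\prime\prime}(0)=\cdots=\b^{(k+1)}(0)=0,\qquad \b^{(k+2)}(0)\neq 0,$$
so that $\b(t)=t+\tfrac{a}{(k+2)!}t^{k+2}+O(t^{k+3})$ with $a=\b^{(k+2)}(0)\neq 0$.

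Next I would recast the vertex condition algebraically. Writing $N=\l\g^{\prime\prime},\g^{\prime\perp}\r$ and $D=\l\g^\prime,\g^\prime\r$, so that $\kp=N/|D|^{3/2}$, a direct differentiation gives
$$\kp^\prime=\frac{\mathrm{sgn}(D)}{2\,|D|^{5/2}}\,P,\qquad P:=2N^\prime D-3ND^\prime,$$
where $N^\prime=\l\g^{\prime\prime\prime},\g^{\prime\perp}\r$ (the term $\l\g^{\prime\prime},\g^{\prime\prime\perp}\r$ vanishes because $\l u,u^\perp\r\equiv 0$) and $D^\prime=2\l\g^{\prime\prime},\g^\prime\r$. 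Thus $P$ is a polynomial in the $3$-jet of $\g$, and away from lightlike points $\kp^\prime=0\iff P=0$ with \emph{equal} orders of vanishing, since there $D\neq 0$ makes $\mathrm{sgn}(D)|D|^{5/2}$ smooth and nonzero. This is what makes $P$ the correct algebraic equation cutting out the vertex stratum in $J^n(1,2)$: the closed condition that $P$ vanish to order $\geq m$ defines $\overline{V(m)}$, and a point is a vertex of order exactly $m$ iff $P$ vanishes to order exactly $m$ there. It is precisely this reformulation that lets one speak of vertices at a lightlike inflection, where $\kp$ itself blows up.

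It then remains a computation to determine the order of vanishing of $P$. Substituting the expansion of $\b$, I would record the leading terms
\begin{gather*}
N=\tfrac{a}{k!}t^{k}+O(t^{k+1}),\qquad N^\prime=\tfrac{a}{(k-1)!}t^{k-1}+O(t^{k}),\\
D=\tfrac{2a}{(k+1)!}t^{k+1}+O(t^{k+2}),\qquad D^\prime=\tfrac{2a}{k!}t^{k}+O(t^{k+1}),
\end{gather*}
and feed these into $P=2N^\prime D-3ND^\prime$. Both $2N^\prime D$ and $3ND^\prime$ are of order $2k$, so the result hinges on whether their order-$2k$ coefficients cancel. A short computation gives
$$P(t)=a^2\Big(\tfrac{4}{(k-1)!(k+1)!}-\tfrac{6}{(k!)^2}\Big)t^{2k}+O(t^{2k+1})=-\frac{2(k+3)\,a^2}{k^2(k+1)\big((k-1)!\big)^2}\,t^{2k}+O(t^{2k+1}).$$
Since $a\neq 0$ and $k\geq 1$, the coefficient of $t^{2k}$ is nonzero, so $P$ vanishes to order \emph{exactly} $2k$. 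Hence a lightlike inflection of order $k$ is a vertex of order $2k$ and not of order $2k+1$, i.e. $LI(k)\subset V(2k)\setminus V(2k+1)$, which is the assertion.

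The only genuine obstacle is the one just met: because the two dominant contributions to $P$ both sit in degree $2k$, the conclusion depends entirely on their leading coefficients not cancelling. Verifying that the combinatorial factor $\tfrac{4}{(k-1)!(k+1)!}-\tfrac{6}{(k!)^2}$ is never zero (it equals $-2(k+3)/(k^2(k+1)((k-1)!)^2)$) is therefore the crux; everything else is bookkeeping with Taylor expansions, and one should double-check the boundary value $k=1$, where the coefficient reduces to $-4a^2$.
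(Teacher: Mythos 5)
Your proof is correct, and while it lands in the same setting as the paper's proof --- the graph normal form $(t,\beta(t))$ with $\beta'(0)=1$ and the polynomial ``numerator of $\kappa'$'' device (your $P=2N'D-3ND'$ is exactly $-2g_s$ for the paper's $g_s=f_s'''(1-f_s'^2)+3f_s'f_s''^2$) --- the execution is genuinely different. The paper proves $LI(k)\subset V(2k)$ by induction on $k$, relying on an explicit closed Leibniz-type formula for $g_s^{(n)}(0)$ as a triple sum in the Taylor coefficients $a_i$ (its equation (5)) to verify $g_s^{(2k)}=g_s^{(2k+1)}=0$ at each inductive step, and then evaluates the same formula at $n=2k$ to get $g_s^{(2k)}=(2k)!(k+2)^2(k+1)(k+3)a_{k+2}^2\neq 0$ for sharpness. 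You replace both the induction and the general formula by a valuation argument on the bilinear decomposition: since $\mathrm{ord}(N')+\mathrm{ord}(D)=\mathrm{ord}(N)+\mathrm{ord}(D')=2k$ (and each order-$2k$ coefficient involves only the leading terms of the factors, as the starting orders force a unique contributing pair), the containment is immediate and the entire theorem reduces to the single cancellation check $\frac{4}{(k-1)!(k+1)!}-\frac{6}{(k!)^2}=-\frac{2(k+3)}{k^2(k+1)\left((k-1)!\right)^2}\neq 0$. Your leading coefficient agrees with the paper's: writing $a_{k+2}=a/(k+2)!$, the paper's value gives the coefficient $(k+3)a^2/\big((k+1)(k!)^2\big)$ of $t^{2k}$ in $g_s$, which is $-\tfrac{1}{2}$ times yours for $P$, consistent with $P=-2g_s$. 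What your route buys is brevity and a transparent explanation of why the critical degree is $2k$; what the paper's route buys is the formula (5) itself, which pins down every derivative $g_s^{(n)}$ and is reused implicitly elsewhere. One point worth stating explicitly in your write-up: your order bounds on $N,N',D,D'$ use only $a_2=\cdots=a_{k+1}=0$, not $a_{k+2}\neq 0$, so the containment half $LI(k)\subset V(2k)$ holds on the closed stratum as well (this is the content of the paper's inductive half), while $a_{k+2}\neq 0$ is needed only for the exactness $LI(k)\cap V(2k+1)=\emptyset$.
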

\begin{proof}
Let $\g(t) = (t,f(t))$ with a lightlike inflection of order $k$ at
$t_0 = 0$. Consider $\g_s$ an $m$-parameter deformation of $\g =
\g_0$ which we can write in the form $\g_s(t) = (t,f_s(t))$, and let
$g_s(t) = f_s^{\prime\prime\prime}(t)(1-f_s^\prime(t)^2) +
3f_s^\prime(t)f_s^{\prime\prime}(t)^2$ be the numerator of
$\kp_s^\prime$. Then, 
\begin{eqnarray*}
LI(k): && a_1 = 1, a_2 = \dots = a_{k+1}
= 0\\
V(2k): && g_s = g_s^\prime = \dots =
g_s^{(2k-1)} = 0
\end{eqnarray*}
with $a_i = f_s^{(i)}(0)/(i!)$.

The proof follows by induction on $k\> 1$. For $k = 1$, we have
$LI\subset V(2)$.

Suppose $LI(k)\subset V(2k)$. Considering $a_1 =1$ and $a_2 = \dots
= a_{k+2} = 0$, then clearly we have $g_s = \dots = g_s^{(2k-1)} = 0$.
To show that $g_s^{(2k)} = g_s^{(2k+1)} =
0$ we use the following formula obtained differentiating $g_s$
successively
\begin{eqnarray}\label{eq:derivada-gs} g_s^{(n)} &=&
(n+3)!a_{n+3}\nonumber\\ && -\sum_{i=0}^n{n \choose
i}(i+3)!a_{i+3}(\sum_{l=0}^{n-i}{n-i\choose
l}(n-i-l+1)!(l+1)!a_{n-i-l+1}a_{l+1})\\
&& + 3\sum_{j=0}^n {n\choose
j}(n-j+1)!a_{n-j+1}(\sum_{l=0}^j{j\choose
l}(j-l+2)!(l+2)!a_{j-l+2}a_{l+2}).\nonumber
\end{eqnarray}

For $n=2k$ in (\ref{eq:derivada-gs}) we get $g_s^{(2k)} =
(2k+3)!a_{2k+3} - (2k+3)!a_{2k+3} = 0$. For $n = 2k+1$ we get
$g_s^{(2k+1)} = (2k+4)!a_{2k+4} - (2k+4)!a_{2k+4} = 0$. Therefore,
$LI(k+1)\subset V(2(k+1))$.

To prove that $LI(k)\subset V(2k)\backslash
V(2k+1)$, as $LI(k)\subset V(2k)$, it is enough to show that
$g_s^{(2k)}\neq 0$ when $a_1 = 1, a_2 = \dots = a_{k+1} = 0$ and
$a_{k+2}\neq 0$. Setting $n = 2k$ in (\ref{eq:derivada-gs}) we get
$g_s^{(2k)} = (2k)!(k+2)^2(k+1)(k+3)a_{k+2}^2$. As $a_{k+2}\neq 0$, we have
$g_s^{(2k)}\neq 0$.
\end{proof}

\section{\bf Geometric deformations of regular curves at non-lightlike points}

Here, the Monge-Taylor map does not intersect the strata involving singularities and lightlike points, so an \textit{FRLS}-generic family (see Definition \ref{def:FRLS-def.}) will be called \textit{FR}-generic (we drop the letters S and L). We study \textit{FR}-generic deformations of
a plane curve at an inflection of finite order and the
bifurcations in the curve and in its evolute at an inflection of
order $2$ and $3$.

Away from inflections, the deformations of a regular curve at a
vertex of finite order can be studied using the family of distance
squared functions. If the family is an $\mathcal{R}^+$-versal
deformation, we get the deformations of the evolute whose
singularities gives the vertices of the curve (see
\cite{Arnold,generic-geometry}).

The deformations of a regular curve at an inflection of finite order
can be studied using the family of height functions. If the family
is an $\mathcal{R}$-versal deformation, then we have a model of its
discriminant and consequently a model for the
dual curve whose singularities give the inflections (see
\cite{generic-geometry}). However, if the inflection is of order
$k\geq 2$, then $\kp = \kp^\prime = 0$ at that point, so there is a
vertex concentrated at the inflection. In this way, when we deform
an inflection of order $\> 2$ we also need to consider how the
vertices appear on the deformed curve. Before that, we need the following result.

\begin{proposition}\label{prop:vert-ord2}
Let $\g$ be a smooth and regular curve. Suppose that $t_0$ is a
vertex of order $2$ which is not an inflection. Then the evolute of
$\g$ has an $E_6$ singularity at $t_0$ (see Figure
\ref{fig:vert.ord2} center). Moreover, if $\g_s$ is a generic
$1$-parameter family with $\g_0 = \g$, then the evolute of $\g_s$
undergoes the swallowtail transitions as shown in Figure
\ref{fig:vert.ord2}. We have an inward and an outward vertices on
one side of the transition and none on the other.
\end{proposition}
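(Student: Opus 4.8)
The plan is to treat the two assertions separately: first the $E_6$ singularity of the evolute of $\g=\g_0$ at the vertex of order $2$, then the behaviour in an FR-generic $1$-parameter family.

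For the first assertion I would argue exactly as in Proposition~\ref{prop:vert-ord}, carrying the computation two orders further. Parametrize $\g$ by arc length and treat the timelike case (the spacelike one being analogous), so that the Minkowski Frenet relations read $T'=\kp N$ and $N'=\kp T$. Differentiating $e(t)=\g(t)-\frac{1}{\kp(t)}N(t)$ once gives $e'(t)=\frac{\kp'}{\kp^2}N$, which already shows that the singular points of $e$ are exactly the vertices of $\g$. Since $t_0$ is a vertex of order $2$ that is not an inflection, $\kp'(t_0)=\kp''(t_0)=0$, $\kp'''(t_0)\neq 0$ and $\kp(t_0)\neq 0$. Computing successive derivatives of $e$ at $t_0$ from $e'=\frac{\kp'}{\kp^2}N$ and the Frenet relations, I expect $e'(t_0)=e''(t_0)=0$ and $e'''(t_0)=\frac{\kp'''(t_0)}{\kp(t_0)^2}N(t_0)\neq 0$. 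Expanding $e(t)-e(t_0)$ in the basis $\{T(t_0),N(t_0)\}$ then yields a $4$-jet whose $N$-component has order $3$ and whose $T$-component has order $4$, both leading coefficients being nonzero multiples of $\kp'''(t_0)$ (using $\kp(t_0)\neq 0$). Comparing orders of vanishing and invoking finite determinacy and the classification of simple germs $\R,0\rt\R^2,0$ (see~\cite{Livro-Farid}), the germ of $e$ at $t_0$ is $\mathcal{A}$-equivalent to $(t^3,t^4)$, i.e.\ it has an $E_6$ singularity.

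For the family, the cleanest route to the vertex count is direct. Write $G(t,s)=\kp_s'(t)$, so that the cusps of the evolute $e_s$ are the zeros of $G(\cdot,s)$, namely the vertices of $\g_s$. FR-genericity means the family of Monge-Taylor maps is transverse to the stratum $V(2)=\{\kp'=\kp''=0\}$, which is equivalent to $(t,s)\mapsto(\kp_s'(t),\kp_s''(t))$ being a submersion at $(t_0,0)$; since $\partial_t(\kp_s',\kp_s'')|_{(t_0,0)}=(0,\kp_0'''(t_0))$ with $\kp_0'''(t_0)\neq 0$, this forces $\partial_sG(t_0,0)\neq 0$. Together with $G(t_0,0)=0$, $\partial_tG(t_0,0)=\kp_0''(t_0)=0$ and $\partial_t^2G(t_0,0)=\kp_0'''(t_0)\neq 0$, the function $G$ has a fold at $(t_0,0)$, so its zero set is, up to sign, a parabola $s=c\,(t-t_0)^2$: for $s$ on one side there are two simple zeros $t_-(s)<t_0<t_+(s)$ and on the other side none. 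At each $t_\pm(s)$ the zero of $\kp_s'$ is simple, so $\kp_s''(t_\pm)=\partial_tG(t_\pm,s)\approx\kp_0'''(t_0)\,(t_\pm-t_0)$ have opposite signs, while $\kp_s(t_\pm)$ keeps the nonzero sign of $\kp_0(t_0)$. Hence $\kp_s\kp_s''$ is positive at one vertex and negative at the other, so one is an inward vertex and the other an outward vertex, with none on the opposite side. This is the announced count.

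It remains to identify the transition of the evolute with the swallowtail, and this is where I expect the main obstacle. I would realize the evolute as the caustic of the family of distance squared functions $\widetilde D(t,u,s)=\l\g_s(t)-u,\g_s(t)-u\r$. Computing the $t$-derivatives of $\widetilde D(\cdot,u_0,0)$ at the centre $u_0=e_0(t_0)$, using $\g-u_0=\frac{1}{\kp}N$ and the Frenet relations, gives $d''=0$, $d'''=\frac{2\kp'}{\kp}$, $d''''=\frac{2\kp''}{\kp}$ and $d^{(5)}=\frac{2\kp'''}{\kp}$; at a vertex of order $2$ that is not an inflection the first three vanish and the last is nonzero, so $\widetilde D(\cdot,u_0,0)$ has an $A_4$ singularity at $t_0$. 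The plan is then to show that for an FR-generic family the three directions $\partial_{u_1}\widetilde D$, $\partial_{u_2}\widetilde D$ and $\partial_s\widetilde D=2\l\g_0(t)-u_0,\partial_s\g_s(t)|_{s=0}\r$ project to a basis of $\E_t/(\l\partial_t d_{u_0}\r+\R)$, so that $\widetilde D$ is an $\mathcal{R}^+$-versal unfolding of this $A_4$; the standard caustic models (see~\cite{Arnold,generic-geometry}) then identify the bifurcation set in $(u,s)$-space with the swallowtail surface, and slicing by $s$ produces exactly the transition of Figure~\ref{fig:vert.ord2}, the two cusps on the two-vertex side being the inward and outward vertices found above. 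The delicate point is precisely this passage from transversality to $V(2)$ to $\mathcal{R}^+$-versality: it amounts to a single open genericity condition on the deformation velocity $\partial_s\g_s|_{s=0}$ at $t_0$, which I would verify is implied by, and generic within, the transversality hypothesis. Once versality is established, the remaining identifications are routine applications of catastrophe theory.
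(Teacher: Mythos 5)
Your proposal is correct, and its core is the same as the paper's: the paper's entire argument for the transition is that genericity makes the family of distance squared functions a $3$-parameter $\mathcal{R}^+$-versal unfolding of the $A_4$ singularity of $d_0$, whence the bifurcation set is a swallowtail and the individual evolutes are its generic sections (citing \cite{Bruce}); your third paragraph reproduces exactly this skeleton, including the correct derivative computations $d''(t_0)=0$, $d'''(t_0)=2\kp'/\kp$, $d''''(t_0)=2\kp''/\kp$, $d^{(5)}(t_0)=2\kp'''/\kp$ identifying the $A_4$. Where you genuinely diverge is in making two steps self-contained that the paper outsources or leaves implicit. First, the paper obtains the $E_6$ singularity by citing \cite{reeve}, whereas you compute the $4$-jet of $e$ in the Frenet frame (your Minkowski Frenet relations $T'=\kp N$, $N'=\kp T$ and the resulting coefficients $\frac{\kp'''(t_0)}{6\kp(t_0)^2}$ in the $N$-direction and $\frac{\kp'''(t_0)}{8\kp(t_0)}$ in the $T$-direction are correct) and invoke $4$-determinacy of $(t^3,t^4)$; this matches the computational style of Proposition \ref{prop:vert-ord} and buys independence from the external reference. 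Second, the paper reads the vertex count off the generic sections of the swallowtail, while your fold argument on $G=\kp_s'$ — noting that transversality to $V(2)$ is exactly $\partial_s\kp_s'(t_0,0)\neq 0$, i.e.\ that $\kp_s'$ is a versal unfolding of the Morse zero of $\kp_0'$ — gives the count \emph{and} the inward/outward types directly from the opposite signs of $\kp_s''(t_\pm)$ and the constant sign of $\kp_s$, a point the paper's proof does not spell out at all. Finally, be aware that the one step you flag as delicate, the passage from transversality to $V(2)$ to $\mathcal{R}^+$-versality of $\widetilde{D}$, is precisely the step the paper asserts without proof ("As $\g_s$ is a generic family, it follows that\dots"); your sketch of it, in the spirit of Theorem \ref{prop:versalidade-inf} and Remark \ref{teo:nlightlike} for height functions, is at least as explicit as the paper, and completing it along the lines you indicate (the $u$-derivatives supply the $t$ and $t^2$ directions, and the open condition on $\partial_s\g_s|_{s=0}$ supplying $t^3$ is the same condition $\partial_s\kp_s'(t_0,0)\neq 0$) would make your version strictly more detailed than the original.
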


\begin{figure}[h!]
\centering
\includegraphics[scale = 0.75]{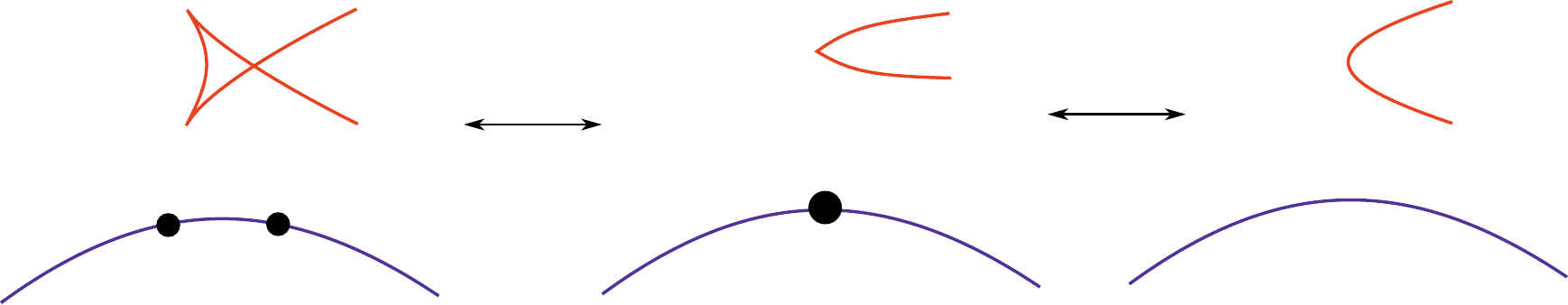}
\caption{\small{Bifurcations of the evolute at a vertex of order
2.}}\label{fig:vert.ord2}
\end{figure}

\begin{proof}
It is shown in \cite{reeve} that the evolute of $\g$ has an $E_6$
singularity at $t_0$, that is, $\g$ is $\mathcal{A}$-equivalent to
$(t^3,t^4)$. As $\g_s$ is a generic family, it follows that the
family of distance squared function $d$ of $\g_s$ is an
$3$-parameter $\mathcal{R}^+$-versal deformation of the $A_4$
singularity of $d_0$. Hence the bifurcation set of the family is a swallowtail and the sections are generic. Consequently, the evolute of $\g_s$
corresponds to the generic sections of the swallowtail (see
\cite{Bruce}).
\end{proof}

The next result gives conditions for a deformation to be \textit{FR}-generic at an inflection of order $k$.

\begin{theorem}\label{prop:versalidade-inf}
Let $\g_s$ be an $m$-parameter deformation of a regular curve $\g_0
= \g$ which has an inflection of order $n\> 2$ at $t_0$. Suppose
that $t_0$ is not a lightlike point. Then, the family of height
functions on $\g_s$ is an $\mathcal{R}^+$-versal deformation of the
height function on $\g_0$ at $t_0$ if, and only if, the family of
Monge-Taylor maps $j^k\Phi$, $k\> n+1$, is transverse to the strata
$I, I(2),\dots,I(n), V, V(2), \dots, V(n-1)$ at $j^k\phi_{\g_0}(0)$.
\end{theorem}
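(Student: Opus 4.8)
The statement is a versality criterion, so I would organize the proof around the standard characterization of $\mathcal{R}^+$-versality in terms of the vector space spanned by the partial-derivative jets of the deformation. Concretely, the plan is to reduce the claim to a transversality condition in the jet space and then match that condition, term by term, against the list of strata $I, I(2), \dots, I(n), V, V(2), \dots, V(n-1)$.

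First I would normalize the setup. Since $t_0$ is not a lightlike point, by the simplification (1) of the Monge--Taylor map I can take $\gamma_0(t) = (t, f(t))$ (timelike case; spacelike is analogous) with $f(t) = c_{n+1}t^{n+1} + O(t^{n+2})$, so that the height function $h_v$ at the distinguished direction has an $A_{n+1}$-singularity at $t_0 = 0$ reflecting the order-$n$ inflection. Writing the family as $\gamma_s(t) = (t, f_s(t))$, the height function family $H(t,s) = \langle \gamma_s(t), v \rangle$ is an $\mathcal{R}^+$-versal unfolding of the $A_{n+1}$-singularity $H(\cdot, 0)$ if and only if the $(n)$-dimensional quotient
$$
\mathcal{E}_t \big/ \big\langle \tfrac{\partial H_0}{\partial t}, 1 \big\rangle_{\mathcal{E}_t} + \mathbb{R}\cdot\{1\}
$$
is spanned by the initial speeds $\partial H/\partial s_i|_{s=0}$, i.e. by their classes modulo the Jacobian ideal plus constants. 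This is the classical versality criterion and I would invoke it directly rather than reprove it. The key bookkeeping step is to identify this quotient with coordinates $(t^2, t^3, \dots, t^{n+1})$ and to express each $\partial H/\partial s_i|_{s=0}$ in these coordinates in terms of the derivatives $\partial_s f_s^{(j)}(0)$.

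The heart of the argument is then to translate the spanning condition into transversality of the Monge--Taylor family to the geometric strata. Here I would use that the defining equations of the strata $I(j)$ are exactly $\kappa = \kappa' = \dots = \kappa^{(j-1)} = 0$, which in the chart $\gamma_s=(t,f_s)$ reduce to conditions on $f_s^{(2)}(0), \dots, f_s^{(j+1)}(0)$, while the vertex strata $V(j)$ are cut out by $g_s = g_s' = \dots = g_s^{(j-1)} = 0$ with $g_s$ the numerator of $\kappa_s'$ as in the proof of Theorem \ref{teo:IL-V}. The plan is to show that the tangent spaces to these strata, together, carve out precisely the same directions in jet space as the Jacobian-plus-constants subspace does in the height-function side, so that the Monge--Taylor map hitting all of $I, I(2), \dots, I(n), V, \dots, V(n-1)$ transversally is equivalent to the initial speeds filling out the versality quotient. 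I expect the bridge between the two sides to come from the observation that at an $A_{n+1}$-point the vertex conditions supply exactly the derivatives of $\kappa$ (hence the higher jets of $f_s$) that the pure inflection conditions omit, so that the combined codimension count matches the dimension $n$ of the versality quotient.

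The main obstacle I anticipate is the careful codimension accounting: I must verify that the $n$ strata $I, I(2), \dots, I(n), V, V(2), \dots, V(n-1)$ have total codimension exactly $n$ at $j^k\phi_{\gamma_0}(0)$ and that their defining gradients are linearly independent there, so that transversality to the full collection is equivalent to surjectivity onto the normal space of the stratum through that point. A subtle point is that at an order-$n$ inflection one automatically has a vertex of lower order concentrated at the inflection (since $\kappa = \kappa' = 0$), so the $I$- and $V$-strata are not in general position and their equations overlap; I would need to check that after accounting for this overlap the independent conditions number precisely $n$, matching the $A_{n+1}$ versality codimension. Establishing this clean correspondence between the two normal spaces — rather than any single computation — is where the real work lies, and I would carry it out by exhibiting an explicit basis adapted to the filtration by order of vanishing of $\kappa$.
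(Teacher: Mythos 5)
Your overall route---Monge normal form, the infinitesimal $\mathcal{R}^+$-versality criterion, and a comparison with tangent spaces of strata in the jet space---is the same as the paper's, but there is an off-by-one error at the start that propagates through your versality quotient, plus a missing step. With the paper's conventions, an inflection of order $n$ means $\kappa(t_0)=\cdots=\kappa^{(n-1)}(t_0)=0$ and $\kappa^{(n)}(t_0)\neq 0$, i.e.\ $A_{n+1}$-contact with the tangent line, so the Monge form is $f(t)=c_{n+2}t^{n+2}+O(t^{n+3})$; your $f(t)=c_{n+1}t^{n+1}+O(t^{n+2})$ is an inflection of order $n-1$ and is inconsistent with your own assertion that the height function has an $A_{n+1}$-singularity. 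As a result you misidentify the quotient: modulo the Jacobian ideal $\langle t^{n+1}\rangle$ and constants, the height family must realize $t,t^2,\dots,t^n$; the two $v$-directions of the family already contribute $\pm t$ (and $f(t)\equiv 0$ mod the ideal), so the $s$-velocities need only span $t^2,\dots,t^n$, an $(n-1)$-dimensional space---not your $n$-dimensional $(t^2,\dots,t^{n+1})$, where moreover $t^{n+1}$ lies in the Jacobian ideal. On the jet-space side, the matching direction $e_{n+1}$ is supplied by the $t$-derivative of the Monge--Taylor map itself, precisely because $c_{n+2}\neq 0$; this step is absent from your plan, and without it the two rank conditions you intend to match differ ($n$ versus $n-1$), so the ``if and only if'' would fail in one direction as you have set it up.

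Second, the ``main obstacle'' you anticipate---verifying that the strata $I,\dots,I(n),V,\dots,V(n-1)$ have total codimension $n$ with linearly independent defining gradients---is misframed and, taken literally, false: these strata have codimensions $1,\dots,n$ and $1,\dots,n-1$ respectively and are \emph{nested}, not in general position. The paper disposes of this in one line: since $I(n)\subset I(j)$ for $j\leq n$, $V(n-1)\subset V(j)$ for $j\leq n-1$, and $I(n)\subset V(n-1)$, every listed stratum contains $I(n)$, the stratum through $j^k\phi_{\gamma_0}(0)$; hence transversality to the whole collection at that point is equivalent to transversality to $I(n)$ alone, whose codimension $n$ (equations $a_2=\cdots=a_{n+1}=0$, tangent space $\mathbb{R}\cdot\{e_1,e_{n+2},\dots,e_k\}$) gives exactly the count above. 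Your hedge about the $I$- and $V$-equations ``overlapping'' shows you sensed the issue, but the filtration-adapted-basis program you propose replaces a trivial inclusion argument with a codimension bookkeeping scheme that, as formulated, does not go through.
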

\begin{proof}
We suppose $\g$ timelike at $t_0 = 0$ (the spacelike case is
analogous) and take $\g = \g_0$ and $\g_s$, respectively, in the
form $\g(t) = (t,f(t))$ and $\g_s(t) = (t,f_s(t))$, with $f_s(0) =
f_s^\prime(0) = 0$, $f^{\prime\prime}(0) = \cdots = f^{(n+1)}(0) =
0$, $f^{(n+2)}(0) = (n+2)!c_{n+2}\neq 0$. The stratum of
inflections of order $n$ is given by $I(n): a_2 = \dots = a_{n+1} =
0$ and the stratum of vertices of order $n$ is given by $V(n): g_s =
\dots = g_s^{(n-1)} = 0$, where $g_s(t) =
f_s^{\prime\prime\prime}(t)(1-f_s^\prime(t)^2) +
3f_s^\prime(t)f_s^{\prime\prime}(t)^2$.

It is clear that $I(n)\subset V(n-1)$. Hence, its enough to verify
the transversality of the family of Monge-Taylor maps to
the stratum $I(n)$. Note that the
tangent space of the stratum $I(n)$ in $J^k(1,1)$ is
$\R\cdot\{e_1,e_{n+2}, \dots, e_{k}\}$, where $e_i$ is the $i$-th
vector of the canonical basis of $\R^k$. Therefore, $j^k\Phi$ is
transversal to the stratum $I(n)$ if, and only if,
$e_2,\dots,e_{n+1} \in Im(dj^k\Phi(0,0)) + \R\cdot\{e_1, e_{n+2},
\dots, e_k\}$.

Note that $j^k\Phi(t,s) =
(f_s^\prime(t),\frac{1}{2!}f_s^{\prime\prime}(t),\dots,\frac{1}{k!}f_s^{(k)}(t))$
and consequently the jacobian matrix of $j^k\Phi$ at $(0,0)$ is
$$\left(\begin{array}{cc}
    0 &  \frac{1}{j!}\frac{\partial^{j+1} f}{\partial t^j \partial s_i}(0,0)\\
    (l+1)c_{l+1} &  \frac{1}{l!}\frac{\partial^{l+1} f}{\partial t^l \partial s_i}(0,0)
\end{array}\right)_{k\t (m+1)},$$
with $j = 1,\dots, n$, $l = n+1, \dots, k$ and $i = 1,\dots, m$.
Taking $u = (\frac{1}{(n+2)c_{n+2}},0,\dots,0)\in\R^{m+1}$ we have
$dj^k\Phi(0,0)(u) = e_{n+1} + v$, where
$v\in\R\cdot\{e_{n+2},\dots,e_k\}$. Thus, $e_2,\dots, e_n\in
Im(dj^k\Phi(0,0)) + \R\cdot\{e_1, e_{n+2}, \dots, e_k\}$ if, and
only if, the matrix $(\frac{1}{j!}\frac{\partial^{j+1} f}{\partial
t^j \partial s_i}(0,0))_{(n-1)\t m}$ has rank $n-1$, which is
exactly the condition for $\mathcal{R}^+$-versality of the family of
height function on $\g_s$.
\end{proof}

\begin{corollary}\label{def:generic-inflexão}
A deformation $\g_s$ of a regular curve $\g$ at an inflection of
finite order is \textit{FR}-generic if, and only if, the family of height
functions on $\g_s$ is an $\mathcal{R}^+$-versal deformation of the
singularity of the height function on $\g$ along the normal
direction.
\end{corollary}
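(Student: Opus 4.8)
The plan is to deduce the corollary directly from Theorem \ref{prop:versalidade-inf}, treating the statement as the packaging of that theorem together with the definition of \textit{FR}-genericity (Definition \ref{def:FRLS-def.}). First I would unwind the definitions: by Definition \ref{def:FRLS-def.}, the deformation $\g_s$ is \textit{FR}-generic precisely when the family of Monge-Taylor maps induced by $\g_s$ is transverse to the stratification of the jet space determined by the $\mathcal{A}$-invariant strata and the geometric strata $I(k), V(k)$ (and their intersections). Since $t_0$ is a regular non-lightlike point, the relevant strata are exactly the inflection and vertex strata, and the singularity and lightlike strata are not met, as observed at the start of Section 5. Thus the \textit{FR}-genericity condition at an inflection of order $n$ reduces to transversality of $j^k\Phi$ to the strata $I, I(2),\dots,I(n), V, V(2),\dots,V(n-1)$ at $j^k\phi_{\g_0}(0)$.

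Next I would apply Theorem \ref{prop:versalidade-inf}. That theorem establishes precisely the equivalence between this transversality condition and the $\mathcal{R}^+$-versality of the family of height functions on $\g_s$. The only residual work is to check that the various strata listed enumerate all the geometric strata that $j^k\Phi$ can meet at the point in question. Concretely, at an inflection of order exactly $n$ we have $\kp(0)=\dots=\kp^{(n-1)}(0)=0$, so the Monge-Taylor map image sits in the closure $I(n)$, and by the inclusion $I(n)\subset V(n-1)$ noted in the proof of Theorem \ref{prop:versalidade-inf} the vertex strata of higher order play no independent role. Hence transversality to the single maximal stratum $I(n)$ (equivalently to the full list of inflection and lower vertex strata) is the operative condition, and this is exactly what the theorem converts into $\mathcal{R}^+$-versality.

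The main obstacle, such as it is, is bookkeeping rather than substance: one must confirm that Definition \ref{def:FRLS-def.} applied at a regular non-lightlike inflection of finite order produces precisely the list of strata $I, I(2),\dots,I(n), V, V(2),\dots,V(n-1)$ that appears in Theorem \ref{prop:versalidade-inf}, with no extra strata (in particular none from the singular, lightlike, or self-intersection part of the stratification) being hit. Once this identification is made, the corollary is immediate from the theorem. I would therefore phrase the proof as: the deformation is \textit{FR}-generic if and only if $j^k\Phi$ is transverse to the geometric strata met at $j^k\phi_{\g_0}(0)$, which at a non-lightlike inflection of order $n$ are exactly the strata in the statement of Theorem \ref{prop:versalidade-inf}; by that theorem this transversality is equivalent to the $\mathcal{R}^+$-versality of the family of height functions on $\g_s$ along the normal direction, completing the argument.
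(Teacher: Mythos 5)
Your proposal is correct and follows exactly the route the paper intends: the corollary is stated without proof precisely because it is the immediate combination of Definition \ref{def:FRLS-def.} (restricted to the non-lightlike regular setting, where only the strata $I(k)$, $V(k)$ are met) with the equivalence of Theorem \ref{prop:versalidade-inf}, including the observation that $I(n)\subset V(n-1)$ makes transversality to $I(n)$ the only operative condition. Your bookkeeping of which strata the Monge--Taylor image actually meets at an inflection of order exactly $n$ matches the argument inside the theorem's proof, so nothing is missing.
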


\begin{remark}\label{teo:nlightlike}
Denote by $H_{(u,s)}$, $\kp_s$ and $\kp_s^\prime$, the family of
height functions on $g_s$, the curvature function and the derivatives of the
curvature function of $\g_s$, respectively. By direct computations
we can show, in the hypothesis of the Theorem
\ref{prop:versalidade-inf}, that
\begin{itemize}
\item[(i)] $H_{(u,s)}$ is an
$\mathcal{R}^+$-versal deformation of $H_{(u_0,0)}$ if, and only if,
$\kp_s$ is an $\mathcal{R}$-versal deformation of the $\kp_0$.

\item[(ii)] If $\kp_s$ is an $\mathcal{R}$-versal deformation of $\kp_0$, then $\kp_s^\prime$ is an $\mathcal{R}$-versal deformation of $\kp_0^\prime$.
\end{itemize}
\end{remark}

In the rest of this section we study the \textit{FR}-bifurcations of the
curve and its evolute at an inflection of order $2$ and $3$.

\begin{theorem}\label{teo:I(2)}
Let $\g$ be a regular curve with an inflection of order 2 at $t_0$,
with $t_0$ a non-lightlike point.
\begin{itemize}
\item[\text{ (i)}] The evolute goes to infinity asymptotically along
the normal line of $\g$ at $t_0$ and can be modeled by $x^2y = 1$
in some coordinate system. The two components of the evolute lie in
the two quadrants determined by the tangent and normal lines of $\g$
at $t_0$ which do not contain the curve $\g$ (see Figure
\ref{fig:inf.ord2} center).

\item[\text{ (ii)}] Let $\g_s$ be an \textit{FR}-generic $1$-parameter family
with $\g_0 = \g$. Then the bifurcations in the evolute are as shown
in Figure \ref{fig:inf.ord2}. We have two ordinary inflections and
an outward vertex on one side of the transition and an inward vertex
and no inflections on the other side of the transition.

\item[\text{ (iii)}] Any \textit{FR}-generic $1$-parameter family of $\g$ is \textit{FR}-equivalent to the model family
$\a_u(t) = (t,t^4 + ut^2)$.
\end{itemize}
\end{theorem}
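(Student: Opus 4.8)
The plan is to settle the three items by first putting the curve in normal form for (i), then reducing an arbitrary generic family to the model family for (iii), and finally reading the bifurcations off that model for (ii).

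For item (i) I would argue exactly as in Proposition \ref{prop:inf-ord}, but with one further vanishing derivative. Assuming $\g$ timelike (the spacelike case being symmetric), I take $t_0 = 0$ and $\g(t) = (t,f(t))$. An inflection of order $2$ forces $\kp(0) = \kp^\prime(0) = 0$ and $\kp^{\prime\prime}(0)\neq 0$, which translates into $f(t) = c_4 t^4 + O(t^5)$ with $c_4\neq 0$. Since the numerator of $\kp$ is $\propto f^{\prime\prime}\propto t^2$, the curvature vanishes to order $2$, so the factor $1/\kp$ in $e(t) = \g(t) - \frac{1}{\kp(t)}N(t)$ blows up like $t^{-2}$. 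Writing the evolute in the basis $\{T(0),N(0)\}$ I expect the form $e(t) = (a\,t + O(t^2),\, b\,t^{-2}(1 + O(t)))$ with $a,b\neq 0$, so that $x\sim a t$ and $y\sim b/t^2$ and hence $x^2 y = \text{const}$; rescaling the coordinates normalises this to $x^2 y = 1$. Both branches $t>0$ and $t<0$ give $y$ of the same sign, so they lie in the two quadrants cut out by the tangent and normal lines on a single side of the tangent line. The delicate point here is the sign bookkeeping (the sign of $c_4$ together with the timelike/spacelike choice of $N$) needed to confirm that this side is the one not containing $\g$.

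For item (iii) I would first check that the model $\a_u(t) = (t,t^4+ut^2)$ is itself \textit{FR}-generic, i.e. that its family of Monge-Taylor maps is transverse to $I, I(2), V$ at the origin. By Theorem \ref{prop:versalidade-inf} with $n=2$ this is equivalent to the $\mathcal{R}^+$-versality of the associated family of height functions, and for $\a_u$ the height function in a varying direction $v=(v_1,v_2)$ is $v_2 t^4 + u v_2 t^2 - v_1 t$, which after normalisation is the standard $\mathcal{R}^+$-versal unfolding $t^4 + u t^2 + (\text{linear})$ of the $A_3$ singularity $t^4$. For an arbitrary \textit{FR}-generic $1$-parameter family $\g_s$, Corollary \ref{def:generic-inflexão} gives the same $\mathcal{R}^+$-versality. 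As a versal unfolding of $A_3$ is unique up to $\mathcal{R}^+$-equivalence, both families induce the same stratification of the base by diffeomorphism type and by number and relative position of inflections and vertices, and a germ of homeomorphism of parameter spaces matching these strata yields the \textit{FR}-equivalence of Definition \ref{def:FRLS-equivalent}.

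For item (ii), thanks to (iii) it suffices to analyse $\a_u$ directly. The inflections solve $f^{\prime\prime} = 12t^2 + 2u = 0$, giving two ordinary inflections at $t = \pm\sqrt{-u/6}$ for $u<0$ and none for $u>0$. For the vertices I would compute the numerator $g_u = f^{\prime\prime\prime}(1-(f^\prime)^2) + 3f^\prime(f^{\prime\prime})^2$ of $\kp_u^\prime$; factoring out $t$ one finds $g_u = t\,[\,24(1+u^3) + 240 u^2 t^2 + 1056\,u\,t^4 + 1344\,t^6\,]$, so that $t=0$ is the only vertex near the origin for small $u$, and it is ordinary since $g_u^\prime(0) = 24(1+u^3)\neq 0$. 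Its type follows from the sign of $\kp_u(0)\kp_u^{\prime\prime}(0)$: here $\kp_u(0)\propto f^{\prime\prime}(0) = 2u$ and $\kp_u^{\prime\prime}(0)\propto g_u^\prime(0)>0$, so the vertex is inward for $u>0$ and outward for $u<0$. Translating cusps of the evolute to vertices and asymptotic branches to inflections then reproduces Figure \ref{fig:inf.ord2}: two ordinary inflections with one outward vertex for $u<0$, and one inward vertex with no inflections for $u>0$.

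I expect the main obstacle to be item (iii): not the versality computation, which is routine, but converting the algebraic $\mathcal{R}^+$-versality into the geometric \textit{FR}-equivalence, i.e. producing the germ of homeomorphism of parameter spaces that simultaneously respects diffeomorphism type and the count and relative position of inflections and vertices. This is where the versality of the height function, the $\mathcal{R}$-versality of $\kp_s$ and $\kp_s^\prime$ recorded in Remark \ref{teo:nlightlike}, and the transversality of the Monge-Taylor family must be combined.
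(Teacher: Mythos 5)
Your proposal is correct, and it uses the same key ingredients as the paper (normal form $f(t)=c_4t^4+O(t^5)$ for (i), $\mathcal{R}^+$-versality of the height functions via Corollary \ref{def:generic-inflexão}, and the $\mathcal{R}$-versality of $\kp_s$ from Remark \ref{teo:nlightlike}), but it organizes them in the opposite order. The paper proves (ii) directly for an \emph{arbitrary} \textit{FR}-generic family: $\kp_s$ is $\mathcal{R}$-versal for the $A_1$ singularity of $\kp_0$, hence equivalent to $t^2+u$, which gives the counts and relative positions; the inward/outward types follow from the abstract observation that at the critical point the sign of $\partial^2\kp/\partial t^2$ is the same on both sides of the transition while the sign of $\kp_s$ changes. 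Item (iii) is then immediate, since those computations used only the order-$2$ inflection and \textit{FR}-genericity. You instead prove (iii) first, via uniqueness of versal unfoldings, and read (ii) off the explicit model, with the concrete factorization $g_u=t\,[\,24(1+u^3)+240u^2t^2+\cdots\,]$ (which is correct) replacing the paper's sign argument; this buys explicit data (inflections at $t=\pm\sqrt{-u/6}$, vertex pinned at $t=0$) at the cost of a transfer step the paper avoids entirely. Two points to patch in that transfer. First, as you yourself flag, $\mathcal{R}^+$-equivalence of the height-function unfoldings does not by itself see vertices; the comparison must be routed through the $\mathcal{R}$-versality of $\kp_s$ (Remark \ref{teo:nlightlike}): an isomorphism of versal unfoldings of $\kp_0$ preserves zeros (inflections), critical points (vertices), their relative order, and — since the values of $\kp$ and the sign of $\kp''$ at critical points are preserved — the inward/outward type, which is exactly what Definition \ref{def:FRLS-equivalent} requires. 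Second, when $c_4<0$ the germ $\kp_0\sim 12c_4t^2$ is $\mathcal{R}$-equivalent to $-t^2$, not to $t^2$, so strict uniqueness of versal unfoldings compares the general family with $-(t^2+u)$ rather than with the model's curvature; this is harmless because the resulting configuration (two inflections with an outward vertex on one side, an inward vertex and no inflections on the other) is invariant under $\kp\mapsto-\kp$ — the same symmetry the paper's sign argument exploits silently — but it should be stated, since otherwise the claimed unfolding isomorphism does not exist in that case.
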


\begin{figure}[h!]
\centering
\includegraphics[scale = 1]{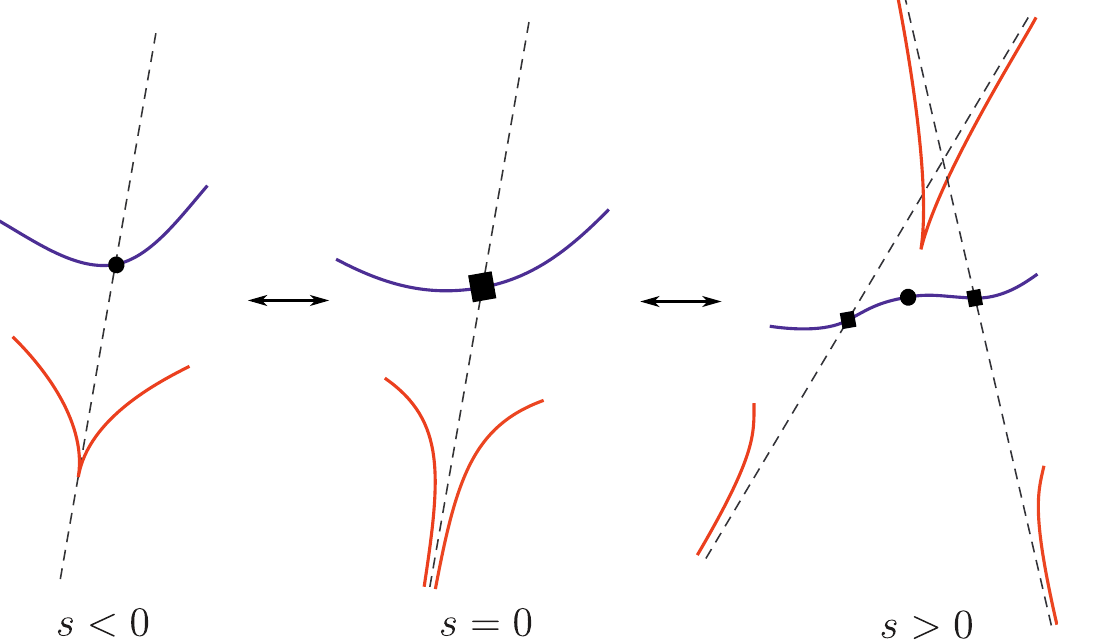}
\caption{\small{\textit{FR}-generic deformations of a curve and its evolute at an inflection of order
2.}}\label{fig:inf.ord2}
\end{figure}

\begin{proof}
We deal with the case $\g$ a timelike curve, the spacelike case follows analogously.

(i) As $t_0$ is an inflection of order $2$ of $\g$, we take $t_0
= 0$ and $\g(t) = (t,f(t)) = (t,c_4t^4 + O(t^5))$, with $c_4\neq 0$.
We have $e(t) = (\frac{2}{3}t + O(t^2), -\frac{1}{12c_4t^2}(1 +
O(t)))$, which can be modeled by $x^2y = 1$ in some coordinate
system.

(ii) As $\g_s$ is \textit{FR}-generic, we can take $\g_s(t) = (t,f_s(t)) =
(t, st^2 + \overline{c_3}(s)t^3 + (c_4 + \overline{c_4}(s))t^4 +
O_s(t^5))$, where $\overline{c_i}(0) = 0$, with $i = 3,4$, (see
Corollary \ref{def:generic-inflexão}).

The family $\kp_s$ is an $\mathcal{R}$-versal deformation of the
$A_1$ singularity of $\kp_0$ (Remark \ref{teo:nlightlike}).
Therefore, $\kp_s$ is equivalent to $t^2+u$. Then on one side of the
transition $\kp_s$ has a critical point and no zeros, while on the
other side it has one critical point between two zeros.

At critical points of $\kp_s$ on both sides of the transition the
sign of $\frac{\partial^2\kp}{\partial t^2}(t,s)$ is the same, but
the sign of $\kp_s$ changes. Therefore, on the side of the transition
which does not have inflections, the vertex of $\g_s$ is inward and
on the other side it is outward.

The bifurcations of $\g_s$ and of its evolute are as shown in Figure
\ref{fig:inf.ord2} (where we also used Proposition \ref{prop:vert-ord}
and \ref{prop:inf-ord}).

(iii) The calculations in (ii) depend only on the fact that the
curve has an inflection of order $2$ and on the family being an
\textit{FR}-generic deformation. The family $\a_u$ satisfies these conditions
and can be taken as an \textit{FR}-model.
\end{proof}

\begin{theorem}
Let $\g$ be a regular curve with an inflection of order 3 at $t_0$.
\begin{itemize}
\item[\text{ (i)}] The evolute goes to infinity asymptotically along
the normal line of $\g$ at $t_0$ and can be modeled by $x^3y = 1$
in some coordinate system. The two components of the evolute lie in
the two quadrants determined by the tangent and normal lines of $\g$
at $t_0$ which do not contain the curve $\g$ (see Figure
\ref{fig:inf.ord3}).

\item[\text{ (ii)}] Let $\g_s$ be an \textit{FR}-generic $2$-parameter family with $\g_0 = \g$. Then the bifurcations in the evolute are as shown in Figure
\ref{fig:inf.ord3}.

\item[\text{ (iii)}] Any \textit{FR}-generic $2$-parameter family of $\g$ is \textit{FR}-equivalent to the model family
$\a_u(t) = (t,t^5 + u_1t^2 + u_2t^3)$.
\end{itemize}
\end{theorem}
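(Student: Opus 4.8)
For part (i) I would follow the template of the proof of Theorem \ref{teo:I(2)}(i), treating the timelike case (the spacelike one being analogous). Since $t_0$ is an inflection of order $3$, I set $t_0=0$ and write $\g(t)=(t,f(t))$ with $f(t)=c_5t^5+O(t^6)$ and $c_5\neq0$ (equivalently $\kp(0)=\kp'(0)=\kp''(0)=0$, $\kp'''(0)\neq0$). Substituting into $e(t)=\g(t)-\frac{1}{\kp(t)}N(t)$ and expanding, I expect
\[
e(t)=\left(\tfrac{3}{4}t+O(t^2),\ -\tfrac{1}{20c_5t^3}\bigl(1+O(t)\bigr)\right),
\]
so that $x^3y$ is asymptotically a nonzero constant and the branch is modeled by $x^3y=1$ after an affine rescaling. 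The quadrant statement then follows by reading off the signs of the two coordinates of $e(t)$ for $t>0$ and $t<0$ and comparing with those of $\g(t)$: the curve occupies two opposite open quadrants cut out by the tangent ($x$-axis) and normal ($y$-axis) lines, and the two evolute branches occupy the complementary pair, exactly as in Proposition \ref{prop:inf-ord} and Theorem \ref{teo:I(2)}(i).

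For part (ii), since $\g_s$ is \textit{FR}-generic, Corollary \ref{def:generic-inflexão} gives that the family of height functions is $\mathcal{R}^+$-versal, whence by Remark \ref{teo:nlightlike} the family $\kp_s$ is an $\mathcal{R}$-versal deformation of $\kp_0$ and $\kp_s'$ of $\kp_0'$. Here $\kp_0\sim t^3$ has an $A_2$ singularity, so a generic $2$-parameter family realizes $\kp_s$ in the normal form $t^3+u_1t+u_2$. I would then analyze the two geometric loci separately in the $(u_1,u_2)$-plane: the inflections of $\g_s$ are the zeros of $\kp_s$, governed by the cubic $t^3+u_1t+u_2$ whose discriminant locus is the cusp $4u_1^3+27u_2^2=0$ (three inflections inside the cusp, one outside), while the vertices of $\g_s$ are the zeros of $\kp_s'$ (equivalently of the numerator $g_s$), governed by $3t^2+u_1=0$ (two vertices for $u_1<0$, none for $u_1>0$, with wall $u_1=0$). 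Overlaying the line $u_1=0$ on the cusp partitions the parameter plane into regions in each of which I can read off the number of inflections and vertices; the inward/outward type of each surviving vertex is determined by the sign of $\kp_s\kp_s''$ at the corresponding critical point, as in the order-$2$ analysis. Feeding these counts and types into Propositions \ref{prop:vert-ord}, \ref{prop:vert-ord2} and \ref{prop:inf-ord} reconstructs the evolute in every region and across every wall, producing Figure \ref{fig:inf.ord3}.

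For part (iii), the computations in (i)--(ii) use only that $\g$ has an inflection of order $3$ and that the family is \textit{FR}-generic, so it suffices to verify that the model $\a_u(t)=(t,t^5+u_1t^2+u_2t^3)$ meets both requirements. At $u=0$ it has an inflection of order $3$ at the origin, and its \textit{FR}-genericity follows from Theorem \ref{prop:versalidade-inf} by checking that the $2\times2$ matrix $\bigl(\frac{1}{j!}\frac{\partial^{j+1}f}{\partial t^j\partial u_i}(0,0)\bigr)$ has rank $2$; for $f=t^5+u_1t^2+u_2t^3$ this matrix is the identity. Hence $\a_u$ can be taken as the \textit{FR}-model.

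The main obstacle will be the bookkeeping in part (ii). Unlike the order-$2$ case, which has a single $A_1$ wall, the discriminant of the curvature is now a codimension-$2$ cusp (for the births of inflections) crossed with the vertex wall $u_1=0$, so one must correctly enumerate all regions and walls of the bifurcation diagram, track inflections and vertices simultaneously, assign inward/outward types consistently across each wall, and then assemble a globally coherent picture of the evolute, with its branches running to infinity and its cusps, matching Figure \ref{fig:inf.ord3}.
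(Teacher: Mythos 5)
Your proposal is correct and follows essentially the same route as the paper: expand $e(t)$ for $\g(t)=(t,c_5t^5+O(t^6))$ to get the $x^3y=1$ model, use FR-genericity via Corollary \ref{def:generic-inflexão} and Remark \ref{teo:nlightlike} to make $\kp_s$ and $\kp_s'$ $\mathcal{R}$-versal deformations of $A_2$ and $A_1$ singularities, track inflections and vertices through the bifurcations of the graph of $\kp_s$, and transfer genericity to the model family for (iii). The only (immaterial) difference is that you read off the strata from the versal normal form $t^3+u_1t+u_2$ (cusp $4u_1^3+27u_2^2=0$ plus the wall $u_1=0$), while the paper computes explicit germ parametrizations of $I(2)$ and $V(2)$ directly in the $(s_1,s_2)$-parameter space; these agree up to the homeomorphism furnished by versality.
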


\begin{proof}
We deal with the case $\g$ a timelike curve, the spacelike case follows analogously.

(i) As $t_0$ is an inflection of order $3$ of $\g$, we can take
$t_0 = 0$ and $\g(t) = (t,f(t)) = (t,c_5t^5 + O(t^6))$, with
$c_5\neq 0$. We have $e(t) = \displaystyle (\frac{3}{4}t + O(t^2),
-\frac{1}{20c_5t^3}(1 + O(t)))$, which can be modeled by $x^3y = 1$
in some coordinate system, see Figure \ref{fig:inf.ord3}
$\textcircled{1}$.

(ii) As $\g_s$ is \textit{FR}-generic, we can take $\g_s(t) = (t,f_s(t)) =
(t,s_1t^2 + s_2t^3 + \overline{c_4}(s)t^4 + (c_5 +
\overline{c_5}(s))t^5 + O_s(t^6))$, where $\overline{c_i}(0) = 0$,
for $i = 4,5$ and $s = (s_1,s_2)$, (see Corollary
\ref{def:generic-inflexão}).

The strata $I(2)$ and $V(2)$ are, respectively, the discriminant and
the bifurcation sets of $\kp_s$. We find that $I(2)$ consists of the germ of the
curve $(\frac{70}{3}c_5t^3+O(t^4), -10c_5t^2+O(t^3))$ with $t\in
\R,0$ and $V(2)$ is given by the germ of the curve
$\displaystyle(s_1,\frac{2c_{410}^2}{5c_5}s_1^2 + O(s_1^3))$ with
$s_1\in\R,0$. Thus, the stratification of the parameter space
$(s_1,s_2)$ of $\g_s$ is as shown in Figure \ref{fig:inf.ord3}
center.

As $\g_s$ is \textit{FR}-generic, by Remark \ref{teo:nlightlike}, the
families $\kp_s$ and $\kp_s^\prime$ are $\mathcal{R}$-versal
deformations of an $A_2$ and an $A_1$ singularity, respectively. The bifurcations in the graph of the curvature function $\kp_s$ are
as shown in Figure \ref{fig:bif.graf-curvatura-I3}, where we
indicate the inflections ($\kp_s = 0$) and vertices ($\kp_s^\prime =
0$) of $\g_s$.

\begin{figure}[h!]
\centering
\includegraphics[scale = 0.5]{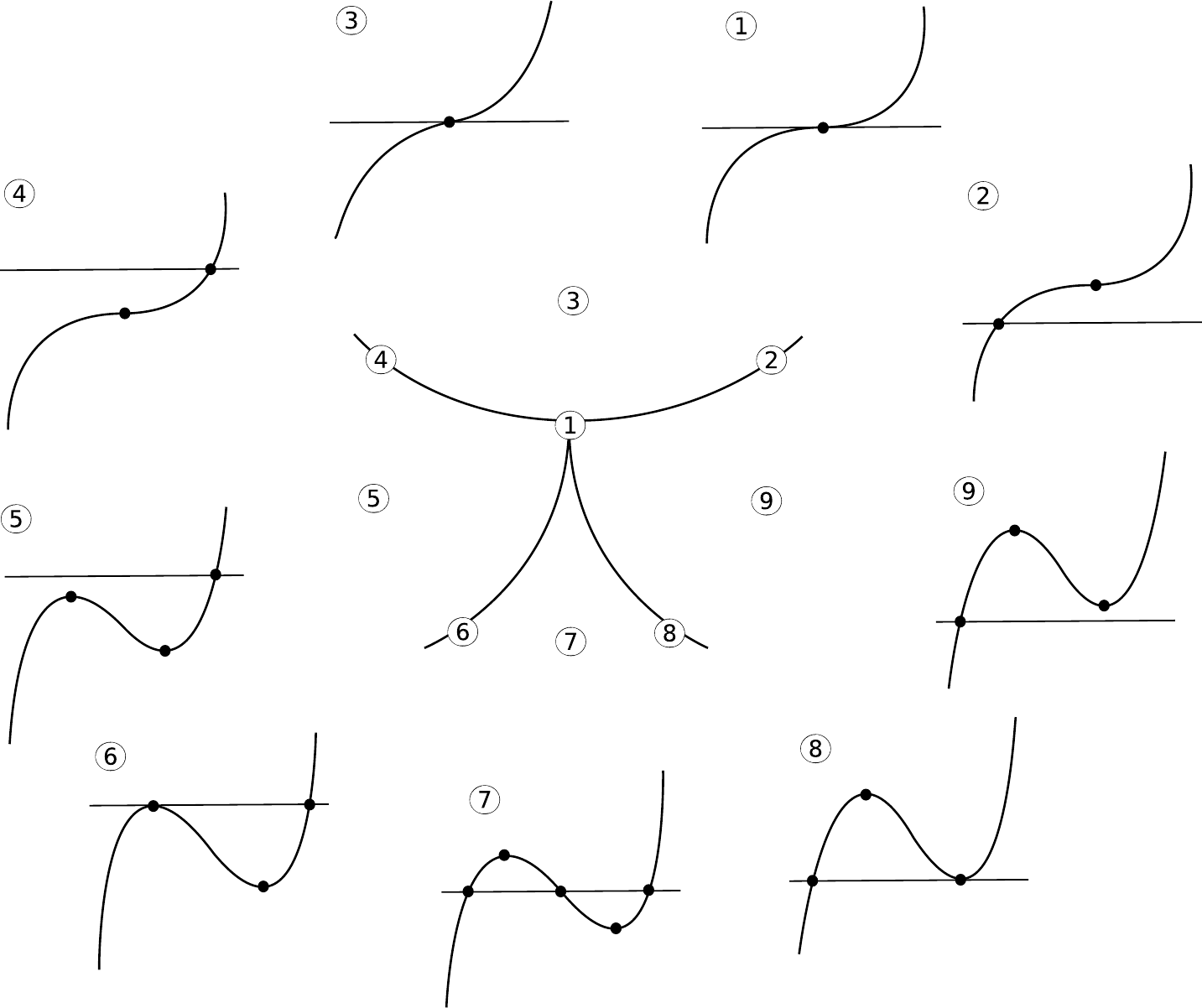}
\caption{\small{Bifurcations in the graph of $\kp_s$. The horizontal
line is the zero level.}}\label{fig:bif.graf-curvatura-I3}
\end{figure}

Using the behavior of the evolute at an inflection of order $3$, the
number and relative position of vertices and inflections of $\g_s$
obtained above and the previous results we conclude that the
bifurcations in $\g_s$ and its evolute are as shown in Figure
\ref{fig:inf.ord3}.

(iii) The computations in (ii) depend only on the fact that the
curve has an inflection of order $3$ and on the family being an
\textit{FR}-generic deformation. The family $\a_u$ satisfies these conditions
and can be taken as an \textit{FR}-model.
\end{proof}

\begin{figure}[h!]
\centering
\includegraphics[scale = 0.7]{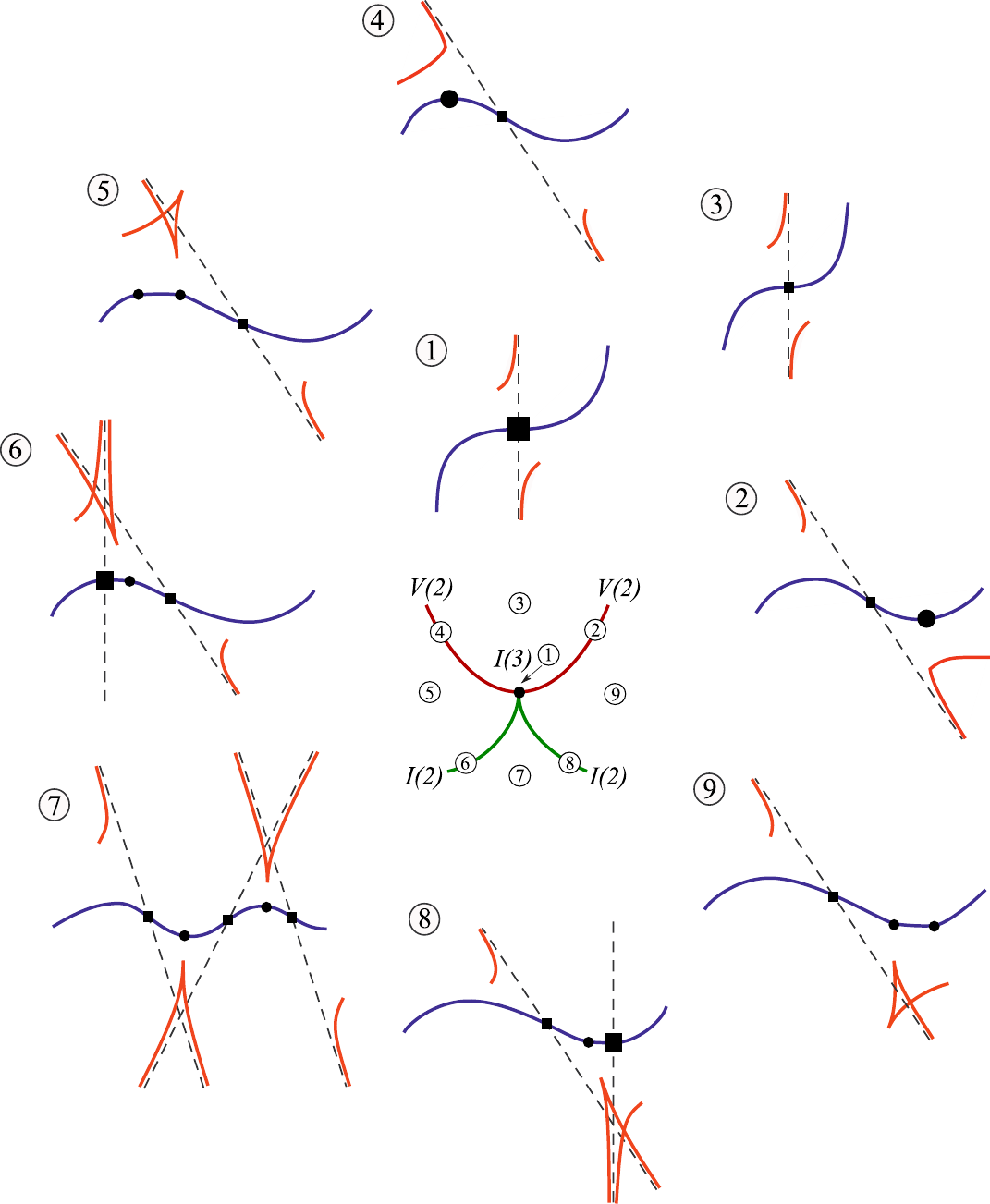}
\caption{\small{\textit{FR}-generic deformations of a curve and its evolute at an inflection of order 3.
The central figure is the stratification of the parameter space.}}\label{fig:inf.ord3}
\end{figure}

\section{\bf Geometric deformations of a regular curve at a lightlike point}

In this section, we deal with lightlike inflections of finite order of regular curves, so an \textit{FRLS}-generic family (see Definition \ref{def:FRLS-def.}) will be called \textit{FRL}-generic. We suppose that the tangent direction is along $(1,1)$ at the origin and take $\g_s$ in the form 
$\g_s(t) = (t, f_s(t))$, with $\g_0 =\g$, $f_0 = f$, $f(0) = 0$ and $f^\prime(0) = 1$.

When deforming the curve, the lightlike points on $\g_s$ are points
of tangency with a line parallel to $y = x$. To capture this
contact, we use the subgroup $\mathcal{K}^*$ of the contact group
$\mathcal{K}$ formed by changes of coordinates in the source that
preserve parallel lines. In order to simplify the calculations we
apply the transformation $T(x,y) = (x,x-y)$ and consider
$\mathcal{K}^*$ preserving horizontal lines. (For more details about
the group $\mathcal{K}^*$ and the classification of function germs
$\R^2,0\rightarrow\R,0$ under its action, see
\cite{Damon,Regilene}.)

Applying $T$ on $\g_s$ gives $\overline{\g_s}(t) = (t,t-f_s(t))$. We can consider,
locally, $\overline{\g_s} =
G_s^{-1}(0)$, where $G:\R_1^2\t\R,(0,0)\rightarrow\R$ is given by
$G(x,y,s) = y-x+f_s(x)$.

The \textit{extended tangent space} $L\mathcal{K}_e^*\cdot g$, with
$g\in\M_2$, is defined as $L\mathcal{K}_e^*\cdot g =
\E_2\cdot\{\frac{\partial g}{\partial x}\} +
\E_y\cdot\{\frac{\partial g}{\partial y}\} + g^*(\M_1)\cdot\E_2$,
where $\E_y$ denotes the set of germs in $\E_2$ which depend only on
$y$. As $\mathcal{K}^*$ is a geometric subgroup of $\mathcal{K}$
(\cite{Damon}), we have the following result.

\begin{theorem}\label{teo:def:K-versal}{\rm(\cite{Damon})}
An $m$-parameter deformation $G$ of a function germ $g\in
\M_2$ is a $\mathcal{K}_e^*$-versal deformation if, and
only if, $L\mathcal{K}^*_e\cdot g +
\R\cdot\{\dot{G_1},\dots,\dot{G_m}\} = \E_2$, where $\dot{G_i}$ is the derivative of $G$ in relation to the $i$-th parameter.
\end{theorem}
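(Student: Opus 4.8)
The plan is to recognize this statement as the specialization to $\mathcal{K}^*$ of Damon's general versality (unfolding) theorem for geometric subgroups of the contact group $\mathcal{K}$, and then to quote the abstract criterion. First I would recall the description of the action already set up above: $\mathcal{K}^*$ is the subgroup of $\mathcal{K}$ whose source diffeomorphisms $\phi(x,y) = (\phi_1(x,y),\phi_2(y))$ preserve the foliation of the plane by horizontal lines $\{y = \text{const}\}$, coupled with the usual contact (multiplication) part that acts on the zero set $g^{-1}(0)$. Differentiating this action at the identity yields the extended tangent space: source vector fields tangent to the horizontal foliation produce $\E_2\cdot\{\frac{\partial g}{\partial x}\} + \E_y\cdot\{\frac{\partial g}{\partial y}\}$, where the restriction to $\E_y$ in the second summand reflects that the $y$-component $\phi_2$ depends on $y$ alone, while the contact part produces $g^*(\M_1)\cdot\E_2$. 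This is precisely the module $L\mathcal{K}_e^*\cdot g$ appearing in the statement.

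Next I would verify that $\mathcal{K}^*$ meets Damon's axioms for a geometric subgroup of $\mathcal{K}$ (\cite{Damon}): that it arises from a substructure compatible with the filtration of $\E_2$ by powers of $\M_2$, that its tangent space $L\mathcal{K}^*\cdot g$ is finitely generated as a module over the appropriate ring and enjoys Damon's naturality and exactness (tangent space) properties, and that the unfolding and tangent-space constructions are compatible with restricting the group from $\mathcal{K}$ to $\mathcal{K}^*$. The decomposition above shows that $L\mathcal{K}_e^*\cdot g$ is finitely generated, and that the only departure from the full $\mathcal{K}_e$-tangent space is the replacement of $\E_2\cdot\{\frac{\partial g}{\partial y}\}$ by $\E_y\cdot\{\frac{\partial g}{\partial y}\}$; this is governed by the subring $\E_y\subset\E_2$ of functions of $y$ alone, which is exactly the type of coherent substructure Damon's framework is designed to handle.

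Once $\mathcal{K}^*$ is established to be geometric, the general versality theorem of \cite{Damon} applies essentially verbatim, giving that the unfolding $G$ of $g$ is $\mathcal{K}_e^*$-versal if and only if the extended tangent space to the $\mathcal{K}^*$-orbit together with the initial velocities of $G$ spans the ambient module, that is,
$$L\mathcal{K}^*_e\cdot g + \R\cdot\{\dot{G_1},\dots,\dot{G_m}\} = \E_2.$$
An alternative, self-contained route would be a Mather-type homotopy argument, integrating the infinitesimal $\mathcal{K}^*$-action along a path of unfoldings and applying the Malgrange preparation theorem, but this merely re-derives Damon's result in the present case. The hard part is thus not the final deduction, which is formal, but the verification that $\mathcal{K}^*$ is a geometric subgroup in Damon's precise sense; this is the single substantive ingredient, and it is exactly what \cite{Damon} supplies, so no further computation is required once it is invoked.
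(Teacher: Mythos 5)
Your proposal takes essentially the same route as the paper: the paper gives no proof of its own, stating the result as a direct consequence of the fact that $\mathcal{K}^*$ is a geometric subgroup of $\mathcal{K}$ in Damon's sense (\cite{Damon}), whose general versality theorem then yields the tangent-space criterion verbatim. Your additional sketch of the derivation of $L\mathcal{K}_e^*\cdot g$ from the group action and of the geometric-subgroup axioms simply fills in the background that the paper delegates entirely to \cite{Damon}.
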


Consider $\g$ a regular curve with a lightlike inflection of order
$k-1$ at $t_0 = 0$, with $k\geq 2$, and $\g_s$ an $m$-parameter
family of regular curves, with $\g_0 = \g$.

\begin{theorem}\label{teo:gener.trans}
Let $\g_0$ be a regular curve with a lightlike inflection of order
$k-1$ at $t_0$, with $k\geq 2$, and $\g_s$ be an $m$-parameter
family of regular curves containing $\g_0$, with $m\> k-1$. Then, the family $G$ associated to $\g_s$ is a $\mathcal{K}_e^*$-versal deformation of $G_0$ at $t_0$
if, and only if, $j^n\Phi$ is transverse to the stratum of lightlike
inflections of order $k-1$ in $J^n(1,1)$, with $n\geq k$.
\end{theorem}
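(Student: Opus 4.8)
The plan is to reduce both the $\mathcal{K}_e^*$-versality of $G$ and the transversality of $j^n\Phi$ to one and the same linear condition, namely that the $(k-1)\t m$ matrix with entries $\frac{1}{i!}\frac{\partial^{i+1}f}{\partial t^i\partial s_j}(0,0)$ ($1\leq i\leq k-1$, $1\leq j\leq m$) has maximal rank $k-1$, which is possible precisely because $m\geq k-1$. First I would record the relevant jets. Writing $a_i = f^{(i)}(0)/i!$, the hypothesis that $\g_0$ has a lightlike inflection of order $k-1$ means, by the stratum description in Theorem \ref{teo:IL-V} together with the lightlike Monge--Taylor normalization (\ref{Monge-Taylor:light}), that $a_1 = 1$, $a_2 = \dots = a_k = 0$ and $a_{k+1}\neq 0$. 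Consequently $g = G_0 = y-x+f(x)$ has $g_y = \partial g/\partial y = 1$ and $g_x = \partial g/\partial x = f^\prime(x)-1 = (k+1)a_{k+1}x^k + O(x^{k+1})$, so $g_x$ has order exactly $k$ in $x$; moreover eliminating $y$ along $g=0$ gives $y = x-f(x)$ with $x-f(x) = -a_{k+1}x^{k+1} + O(x^{k+2})$, of order $k+1$. The parameter directions are $\dot G_j = \psi_j(x) := \frac{\partial f_s}{\partial s_j}(x)\big|_{s=0}$, functions of $x=t$ alone whose $x^i$-coefficient equals $\frac{1}{i!}\frac{\partial^{i+1}f}{\partial t^i\partial s_j}(0,0)$.

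Next I would unwind the $\mathcal{K}_e^*$-versality criterion of Theorem \ref{teo:def:K-versal}. Since $g_y = 1$, the term $\E_y\cdot\{g_y\}$ is all of $\E_y$; since $g_x = x^k u(x)$ with $u(0)\neq 0$ a unit, the term $\E_2\cdot\{g_x\}$ is the ideal $(x^k)\E_2$. Thus $L\mathcal{K}_e^*\cdot g = (x^k)\E_2 + \E_y + (g)$. Because $(g)\subset L\mathcal{K}_e^*\cdot g$, I would pass to $\E_2/(g)\cong\E_1$ (eliminating $y = x-f(x)$) and then to $\E_1/(x^k)$, a $k$-dimensional space with basis $1,x,\dots,x^{k-1}$. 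In this quotient $\E_y$ collapses to the constants $\R\cdot 1$, since $x-f(x)$ has order $k+1 > k-1$ forces $\phi(x-f(x))\equiv\phi(0)$, while each $\psi_j$ reduces to its $(k-1)$-jet. Hence versality, i.e. $L\mathcal{K}_e^*\cdot g + \R\cdot\{\dot G_1,\dots,\dot G_m\} = \E_2$, becomes $\R\cdot 1 + \mathrm{span}\{\bar\psi_1,\dots,\bar\psi_m\} = \R^k$, which holds if and only if the non-constant parts of the $\psi_j$ span $\R\cdot\{x,\dots,x^{k-1}\}$, that is, if and only if the matrix above has rank $k-1$.

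Then I would compute the transversality of the family $j^n\Phi(t,s) = (f_s^\prime(t),\frac{1}{2!}f_s^{\prime\prime}(t),\dots,\frac{1}{n!}f_s^{(n)}(t))$ to the stratum $LI(k-1) = \{b_1 = 1,\ b_2 = \dots = b_k = 0\}\subset J^n(1,1)\cong\R^n$, whose tangent space at $j^n\phi_{\g_0}(0)$ is $\R\cdot\{e_{k+1},\dots,e_n\}$. The $\partial_t$-column of the Jacobian at $(0,0)$ has $i$-th entry $(i+1)a_{i+1}$, hence vanishes for $i<k$ and equals $(k+1)a_{k+1}\neq 0$ for $i=k$, so it supplies $e_k$ modulo $\R\cdot\{e_{k+1},\dots,e_n\}$; the $\partial_{s_j}$-columns have $i$-th entry $\frac{1}{i!}\frac{\partial^{i+1}f}{\partial t^i\partial s_j}(0,0)$. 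Transversality thus amounts to $e_1,\dots,e_{k-1}$ lying in the image modulo $\R\cdot\{e_k,\dots,e_n\}$, which is exactly the rank condition obtained above, giving the equivalence.

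I expect the main obstacle to be the second step: correctly identifying $L\mathcal{K}_e^*\cdot g$ as $(x^k)\E_2 + \E_y + (g)$ and carrying out the double reduction, first modulo $(g)$ and then modulo $(x^k)$, in particular verifying that the $\E_y$ contribution degenerates precisely to the constants because $x-f(x)$ has order $k+1$. Once that finite-dimensional obstruction space is pinned down, everything else is bookkeeping of Taylor coefficients, and the two sides of the equivalence meet at the single rank condition.
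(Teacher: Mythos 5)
Your proposal is correct and takes essentially the same route as the paper: both arguments reduce transversality of $j^n\Phi$ to the stratum $LI(k-1)$, via the Jacobian whose $\partial_t$-column supplies $e_k$ (since $(k+1)a_{k+1}\neq 0$) and whose $\partial_{s_j}$-columns carry the mixed partials, to the single condition that the $(k-1)\times m$ matrix $\bigl(\frac{1}{i!}\frac{\partial^{i+1}f}{\partial t^i\partial s_j}(0,0)\bigr)$ have rank $k-1$, which is then identified with $\mathcal{K}_e^*$-versality. The only difference is one of detail: the paper asserts the equivalence of the rank condition with versality in one line, whereas you explicitly compute $L\mathcal{K}_e^*\cdot g = (x^k)\E_2 + \E_y + (g)$ and carry out the reduction modulo $(g)$ and $(x^k)$ (using that $x-f(x)$ has order $k+1$, so $\E_y$ collapses to constants), thereby supplying the verification the paper leaves implicit.
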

\begin{proof} We start by proving the theorem for $\g_s$ an $(k-1)$-parameter family.

The stratum $LI(k-1)$ is given
by $a_1 = 1, a_2 = a_3 = \cdots = a_k = 0$ and its tangent space by
$\R\cdot\{e_{k+1},...,e_n\}$. We have $j^n\Phi(0,0) =
(1,0,...,0,c_{k+1},c_{k+2},...,c_n)$, with $c_{k+1}\neq 0$, where
$\g_0(t) = (t,t + c_{k+1}t^{k+1} + c_{k+2}t^{k+2} + O(t^{k+3}))$.
Hence, $j^n\Phi$ is transverse to $LI(k-1)$ if, and only if,
$Im(d_{(0,0)}j^n\Phi) + \R\cdot\{e_{k+1},...,e_n\} = \R^n$.



Note that the jacobian matrix of $j^n\Phi$ at $(0,0)$ is given by
$$Jac(j^n\Phi)(0,0) = \left(
    \begin{array}{cc}
       0 & \frac{1}{l!}\frac{\partial^{l+1} f}{\partial s_i\partial t^l}(0,0) \\
      (j+1)c_{j+1} & \frac{1}{j!}\frac{\partial^{j+1} f}{\partial s_i\partial t^j}(0,0) \\
    \end{array}
  \right)_{n\t k},
$$
with $l = 1,\dots,k-1$ and $j = k,\dots, n$. We can show that
$e_1,..., e_k\in Im(d_{(0,0)}j^n\Phi) + \R\cdot\{e_{k+1},...,e_n\}$
if, and only if, the matrix $\left(
\begin{array}{c}
\frac{1}{j!}\frac{\partial^{j+1} f}{\partial s_i\partial t^j}(0,0) \\
\end{array}\right)$ has rank $k-1$, which is equivalent to $\g_s$ be a $\mathcal{K}_e^*$-versal deformation of $\g_0$.

%
%
%
The proof when $\g_s$ is an $m$-parameter family, with $m\> k-1$, follows
in a similar way.
\end{proof}

Note that the stratum $LI(k-1)$ is contained in the strata $I, I(2),
\dots, I(k-1)$, $L, LI, LI(2),\dots, LI(k-2)$, $V, V(2), \dots,V(2(k-1))$.
Therefore, if $ j^n\Phi$ is transverse to the stratum $IL(k-1)$ it will also be transverse to these strata.

\begin{corollary}
A deformation
$\g_s$ of a regular curve $\g$ at a lightlike inflection of finite
order is \textit{FRL-generic} if, and only if, the family $G$ associated to $\g_s$ is a
$\mathcal{K}_e^*$-versal deformation of $G_0$ at $t_0$.
\end{corollary}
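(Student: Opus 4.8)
The plan is to reduce the equivalence to the characterization of FRL-genericity via transversality (Definition \ref{def:FRLS-def.}) and then invoke Theorem \ref{teo:gener.trans} to replace that transversality by the $\mathcal{K}_e^*$-versality of the associated family $G$. The key observation, already recorded in the remark immediately preceding the statement, is that the stratum $LI(k-1)$ of lightlike inflections of order $k-1$ is contained in all the lower-order geometric and $\mathcal{A}$-invariant strata that appear in the definition of FRLS-genericity, namely $I, I(2),\dots,I(k-1)$, the lightlike strata $L, LI, LI(2),\dots,LI(k-2)$, and the vertex strata $V, V(2),\dots,V(2(k-1))$. Consequently transversality of the family of Monge-Taylor maps to the single stratum $LI(k-1)$ automatically forces transversality to all of these containing strata.

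First I would fix the setup exactly as in Theorem \ref{teo:gener.trans}: a regular curve $\g$ with a lightlike inflection of order $k-1$ at $t_0$, normalized so that the tangent direction is along $(1,1)$, written as $\g_s(t) = (t,f_s(t))$ with the associated germ $G(x,y,s) = y - x + f_s(x)$. By Definition \ref{def:FRLS-def.}, the deformation $\g_s$ is FRL-generic precisely when the family of Monge-Taylor maps $j^n\Phi$ is transverse to the stratification determined by the $\mathcal{A}$-invariant strata together with the geometric strata $I(k), V(k), LI(k)$ and their intersections. Because the base point $j^n\phi_{\g_0}(t_0)$ lies on $LI(k-1)$, and by the containment recorded in the remark this point lies on all the relevant lower strata, the genericity condition at this point reduces exactly to transversality to the deepest stratum through it, which is $LI(k-1)$.

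Next I would apply Theorem \ref{teo:gener.trans} directly: it asserts that, for $n \geq k$, the family $G$ associated to $\g_s$ is a $\mathcal{K}_e^*$-versal deformation of $G_0$ at $t_0$ if and only if $j^n\Phi$ is transverse to $LI(k-1)$ in $J^n(1,1)$. Combining this equivalence with the reduction of the previous paragraph yields the claimed biconditional: $\g_s$ is FRL-generic if and only if $G$ is a $\mathcal{K}_e^*$-versal deformation of $G_0$. For the case of an ordinary lightlike inflection ($k-1 = 1$) the same argument applies with the single stratum $LI$ in place of $LI(k-1)$.

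The main subtlety to get right is not a hard computation but the bookkeeping of the stratification: one must check that near $j^n\phi_{\g_0}(t_0)$ the intersection of all strata passing through that point is precisely $LI(k-1)$, so that transversality to this single stratum really is equivalent to FRL-genericity and not merely necessary for it. This is exactly what the containment $LI(k-1)\subset I(j), LI(j), V(j)$ (for the relevant ranges of $j$) guarantees, with the inclusion into the vertex strata up to order $2(k-1)$ coming from Theorem \ref{teo:IL-V}. Once that containment is invoked, the corollary follows immediately from Theorem \ref{teo:gener.trans} with no further calculation.
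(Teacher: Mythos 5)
Your proposal is correct and follows the same route the paper takes implicitly: the paper states just before the corollary that $LI(k-1)$ is contained in the strata $I,\dots,I(k-1)$, $L, LI,\dots,LI(k-2)$, $V,\dots,V(2(k-1))$ (the vertex inclusions coming from Theorem \ref{teo:IL-V}), so that transversality of $j^n\Phi$ to $LI(k-1)$ at the base jet forces transversality to all strata through that point, and then Theorem \ref{teo:gener.trans} converts this single transversality condition into $\mathcal{K}_e^*$-versality of $G$. Your bookkeeping remark is the right one to make explicit, since transversality to a stratum with smaller tangent space implies transversality to every stratum containing it at their common points, which is exactly why the biconditional holds.
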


\begin{theorem}\label{teo:IL} Let $\g$ be a regular curve with a lightlike ordinary inflection at $t_0$.
\begin{itemize}
\item[\text{(i)}] The caustic of $\g$ is the union of the tangent line
of $\g$ at $t_0$ and a smooth curve with a
lightlike ordinary inflection at $\g(t_0)$ which lies in the two
quadrants determined by the lightlike lines which do not contain the
curve $\g$, as shown in Figure \ref{fig:inf.ord.light} center.

\item[\text{ (ii)}] For an \textit{FRL}-generic $1$-parameter family $\g_s$
with $\g_0=\g$, the bifurcations in the caustic are as shown in
Figure \ref{fig:inf.ord.light}. We have two lightlike points and an
ordinary inflection on one side of the transition and two outward
vertices and an ordinary inflection on the other side of the
transition.

\item[\text{(iii)}] Any \textit{FRL}-generic $1$-parameter family of $\g$ is \textit{FRL}-equivalent to the model family $\a_u(t) = (t, (1+u)t + t^3)$.
\end{itemize}
\end{theorem}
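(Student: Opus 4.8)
The plan is to follow the template of Theorem \ref{teo:I(2)}, treating the timelike case (the spacelike one is analogous). Since $t_0$ is a lightlike ordinary inflection, I would normalize so that $t_0=0$, the tangent direction is $(1,1)$, and write $\g(t)=(t,f(t))$ with $f(t)=t+c_3t^3+O(t^4)$ and $c_3\neq0$ (recall the stratum $LI(1)$ is $a_1=1$, $a_2=0$). For (i) I would substitute this into the caustic formula, so that $Bif(d)$ consists of the points $\g(t)-\lb\,\g^\prime(t)^\perp$ satisfying $\lb\l\g^\prime(t)^\perp,\g^{\prime\prime}(t)\r+\l\g^\prime(t),\g^\prime(t)\r=0$. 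The key observation is that at $t=0$ both $\l\g^\prime,\g^\prime\r=-1+(f^\prime)^2$ and $\l\g^\prime(t)^\perp,\g^{\prime\prime}\r=f^{\prime\prime}$ vanish (the first because the point is lightlike, the second because $f^{\prime\prime}(0)=0$ at the inflection), so the defining equation holds for every $\lb$; this yields the whole tangent line $\{-\lb(1,1)\}$, that is $y=x$, as one component of the caustic. For $t\neq0$ one solves $\lb(t)=-\l\g^\prime,\g^\prime\r/\l\g^\prime(t)^\perp,\g^{\prime\prime}\r$ and substitutes to obtain the second, smooth branch $c(t)=(X(t),Y(t))$. Expanding, $X^\prime(0)=Y^\prime(0)=2$, so the branch is regular with lightlike tangent $(1,1)$; the decisive point is that the quadratic terms of $X$ and $Y$ coincide, so that $X(t)-Y(t)=2c_3t^3+O(t^4)$. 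Hence the height of $c$ along $(1,1)$ is $\l c(t),(1,1)\r=Y(t)-X(t)=-2c_3t^3+O(t^4)$, an $A_2$ singularity, so the branch has a lightlike ordinary inflection at the origin. Finally, comparing the signs of $y\pm x$ along $\g$ (where $y-x=c_3t^3+\cdots$ and $y+x=2t+\cdots$) with those of $Y\pm X$ along the branch shows that $\g$ occupies two opposite quadrants and the branch the complementary two, which is the quadrant claim.

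For (ii) I would use the corollary identifying \textit{FRL}-genericity with $\mathcal{K}_e^*$-versality of the associated germ, and then follow the three features separately. The lightlike points of $\g_s$ are the solutions of $f_s^\prime(t)=1$, equivalently the critical points of $t\mapsto f_s(t)-t$; at $s=0$ this map equals $c_3t^3+\cdots$, an $A_2$ critical point, and versality unfolds it as a fold, giving two lightlike points on one side of the transition and none on the other. A single inflection persists near $t_0$ as a simple zero of $f_s^{\prime\prime}$, and it is ordinary and non-lightlike for $s\neq0$. The vertices are the zeros of $g_s=f_s^{\prime\prime\prime}(1-(f_s^\prime)^2)+3f_s^\prime(f_s^{\prime\prime})^2$; since $g_0(t)=72c_3^2t^2+O(t^3)$ has a double zero (in accordance with $LI(1)\subset V(2)$ from Theorem \ref{teo:IL-V}), versality unfolds it so that two vertices appear, and a direct sign comparison shows they lie on the side opposite to the lightlike points. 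A computation of $\kp\,\kp^{\prime\prime}$ at these two points shows that both are outward. Assembling these facts with the behavior of the caustic at a lightlike point (\cite{Minkowski}), at a vertex (Proposition \ref{prop:vert-ord}) and at an inflection (Proposition \ref{prop:inf-ord}) gives the transitions of Figure \ref{fig:inf.ord.light}.

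For (iii) I would argue as in the earlier theorems: the computations in (ii) use only that $\g$ has a lightlike ordinary inflection and that the family is \textit{FRL}-generic, so it suffices to check that $\a_u(t)=(t,(1+u)t+t^3)$ is such a family. Here $\a_0$ has $f(t)=t+t^3$, a lightlike ordinary inflection, and the associated germ is $G_0=y+x^3$ with $\dot G_1=x$; computing $L\mathcal{K}_e^*\cdot G_0=x^2\E_2+\E_y+(y+x^3)\E_2$ one finds that its cokernel in $\E_2$ is spanned by $x$, whence $L\mathcal{K}_e^*\cdot G_0+\R\cdot\{x\}=\E_2$ and $\a_u$ is $\mathcal{K}_e^*$-versal, hence \textit{FRL}-generic; therefore any \textit{FRL}-generic family is \textit{FRL}-equivalent to $\a_u$. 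The main obstacle is the bookkeeping in (i): one must expand $\lb(t)$ to third order to see the cancellation of the quadratic terms in $X-Y$, which is exactly what upgrades the branch from a mere lightlike tangency to a genuine lightlike ordinary inflection. A secondary difficulty is confirming the outward character of the two vertices, since $\kp$ blows up near the lightlike inflection and the sign of $\kp\,\kp^{\prime\prime}$ must be extracted with care.
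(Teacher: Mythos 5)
Your proposal is correct, but it reaches the theorem by a genuinely different route from the paper's. For (i) the paper simply invokes Theorem 3.6 of \cite{reeve}, whereas you recompute the caustic from scratch via the formula for $Bif(d)$: the observation that at $t_0$ both $\l\g^\prime,\g^\prime\r$ and $\l\g^{\prime\perp},\g^{\prime\prime}\r$ vanish (so the whole tangent line is a component), and that after solving for $\lb(t)$ the quadratic terms of the two coordinates of the smooth branch cancel, is exactly what is needed, and your expansion checks out: with $f = t + c_3t^3 + c_4t^4+\cdots$ one gets $\lb(t) = -t + \frac{2c_4}{3c_3}t^2 + O(t^3)$ and indeed $X - Y = 2c_3t^3 + O(t^4)$, giving the $A_2$-contact with the lightlike tangent and, via the signs of $y\pm x$, the quadrant claim; this makes (i) self-contained at the cost of bookkeeping. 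For (ii) the paper proceeds through the family of distance-squared functions: $\mathcal{R}^+$-versality of the $A_3$ singularity makes $Bif(d)$ a cuspidal edge, a Morse-contact argument identifies the beaks transition (hence the vertex count), and the lightlike points, the inflection and all relative positions are read off a $2$-dimensional transversal section of the jet-space stratification by $I, L, LI, V$ against the image of the Monge--Taylor map. You instead analyze the defining equations directly: lightlike points as the fold unfolding of the $A_2$ critical point of $f_s(t)-t$, the inflection as the persistent simple zero of $f_s^{\prime\prime}$, and vertices as zeros of $g_s$, with $g_0 = 72c_3^2t^2+O(t^3)$ consistent with Theorem \ref{teo:IL-V}. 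This is equally valid and more elementary; what the paper's route buys is the identification of the transition as a standard beaks section of the cuspidal edge and a single picture delivering all orderings at once, whereas you must extract the orderings and the sign of $\kp_s\kp_s^{\prime\prime}$ from explicit root computations (your side comparison, $t^2\approx s/(6c_3)$ for vertices versus $t^2\approx -s/(3c_3)$ for lightlike points, does this correctly). One small caveat: your appeal to ``versality'' to unfold $g_s$ is not automatic from $\mathcal{K}^*_e$-versality of $G$ and should be supported by the direct expansion $g_s(t) = -12c_3s + O(s^2) + O(s)t + 72c_3^2t^2+\cdots$, which your computation essentially contains. In (iii) you add an explicit check that the model is $\mathcal{K}_e^*$-versal (the cokernel of $L\mathcal{K}^*_e\cdot(y+x^3)$ is $\R\cdot\{x\}$ and $\dot G_1 = x$), which the paper leaves implicit; that is a welcome strengthening.
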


\begin{figure}[h!]
\centering
\includegraphics[scale = 0.8]{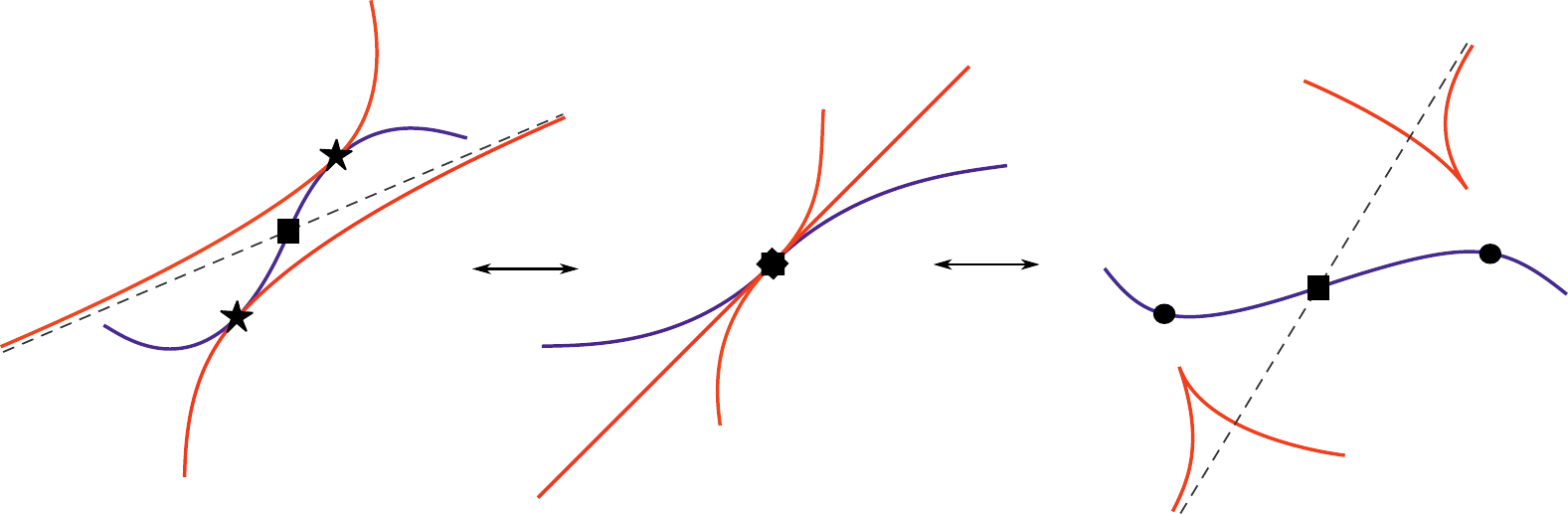}
\caption{\small{\textit{FRL}-generic deformations of a curve and its caustic at a lightlike ordinary
inflection.}}\label{fig:inf.ord.light}
\end{figure}

\begin{proof}
(i) This is a consequence of Theorem 3.6 in
\cite{reeve}.




(ii) As $\g_s$ is an
\textit{FRL}-generic deformation of $\g$, we write it in the form $\g_s(t) =
(t,f_s(t))$ with $f_s(t) = (1+s)t + \overline{c_2}(s)t^2 + (c_3 +
\overline{c_3})t^3 + O_s(t^4)$, where $\overline{c_i}(0) = 0$ for $i
= 2,3$ and $c_3\neq 0$.

The family of distance squared function $d$ is an
$\mathcal{R}^+$-versal deformation of an $A_3$ singularity. That
implies that the bifurcation set $Bif(d)$ of $d$ is locally
diffeomorphic to a cuspidal edge. Hence the individual caustics of
$\g_s$ are obtained by taking sections of the cuspidal edge (see
\cite{Arnold}). One can show that the plane $s = 0$ has a Morse
contact with the singular set of $Bif(d)$ and intersects $Bif(d)$ along two tangential curves. Thus the caustics
undergoes the beaks transitions. It follows that $\g_s$ has two vertices on one side of the transition and
none on the other. 

To obtain the inflections and lightlike points on
$\g_s$, we consider a $2$-dimensional section of the jet space given
by $(a_3,\dots,a_k) = (c_3,\dots,c_k)$. In such section, $V$ is a
regular curve given by $(1 + \frac{2}{c_3}a_2^2 + O(a_2^3), a_2)$.
Moreover, for $s = 0$, we can write the image of the restriction of
the Monge-Taylor map to this section in the form $(1 +
\frac{1}{3c_3}t^2 + O(t^3),t)$. Comparing such curves we conclude
that this section is as described in Figure \ref{fig:sec2-inf-light}
center.

\begin{figure}[h!]
\centering
\includegraphics[scale = 0.8]{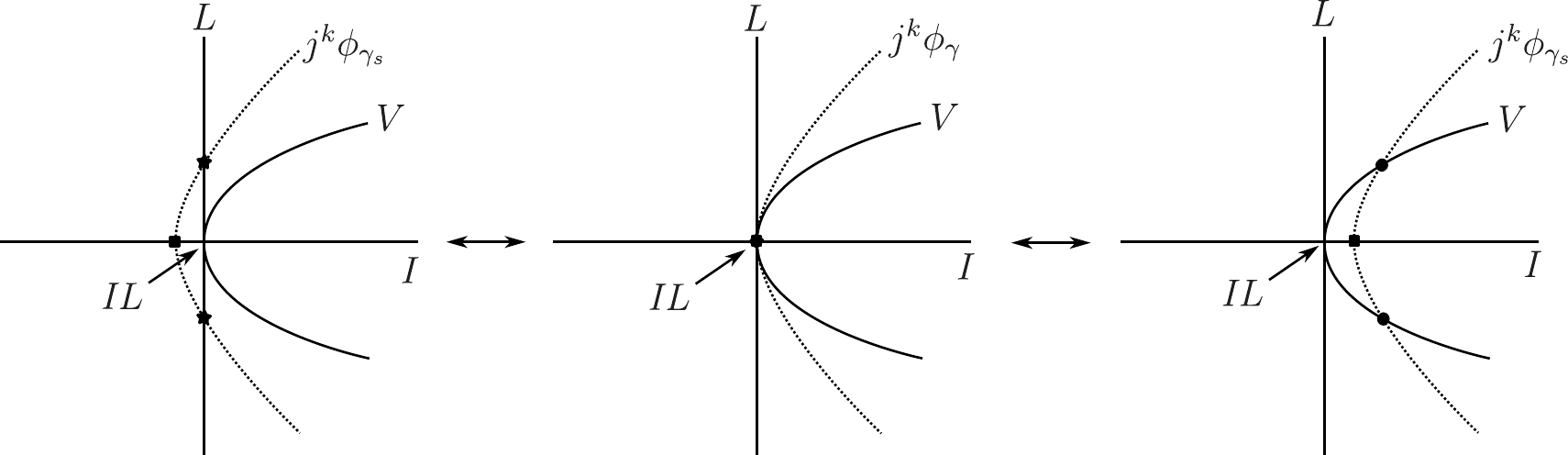}
\caption{\small{A $2$-dimensional transversal section of the
stratification of $J^k(1,1)$ by $I, L, LI, V$ and the relative
positions of the image of the Monge-Taylor map of $\g_s$ for $s < 0$
in left, $s = 0$ in center and $s > 0$ in right (case $c_3 >
0$).}}\label{fig:sec2-inf-light}
\end{figure}

We conclude that on one side of the transition we have an inflection
between two lightlike points and on the other side we have an
inflection between two vertices (see Figure
\ref{fig:sec2-inf-light}). Moreover, when $\g_s$ has vertices, the curvature function has a maximum and a minimum between
a zero. Consequently, $\kp_s\kp^{\prime\prime}_s < 0$ at both critical
points of $\kp_s$, that is, both vertices of $\g_s$ are outward.

(iii) The computations in (ii) depend only on the fact that the
curve has a lightlike ordinary inflection and on the family being an
\textit{FRL}-generic deformation. The family $\a_u$ satisfies these
conditions and can be taken as an \textit{FRL}-model.
\end{proof}

\begin{theorem}\label{teo:IL(2)} Let $\g$ be a regular curve with a lightlike inflection of order $2$ at $t_0$.
\begin{itemize}
\item[\text{(i)}] The caustic of $\g$ is the union of the tangent line of $\g$ at $t_0$ and a smooth curve with a lightlike
inflection of order $2$ which lies in the
semi-plane determined by the lightlike line that contains the curve
$\g$, see Figure \ref{fig:bif.IL(2)} $\textcircled{1}$.

\item[\text{ (ii)}] For an \textit{FRL}-generic $2$-parameter family $\g_s$ with
$\g_0=\g$, the bifurcations in the caustic are as shown in Figure
\ref{fig:bif.IL(2)}.

\item[\text{ (iii)}] Any \textit{FRL}-generic $2$-parameter family of $\g$ is \textit{FRL}-equivalent to the model family $\a_u(t) = (t, (1+u_1)t + u_2t^2 + t^4)$.
\end{itemize}
\end{theorem}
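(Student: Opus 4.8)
The plan is to put $\g$ in a normal form, compute the caustic directly for~(i), invoke the two versality criteria (the distance-squared and the $\mathcal{K}_e^*$ one) for~(ii), stratify the parameter plane and read off the caustic region by region, and then obtain~(iii) for free. Throughout I take the tangent along $(1,1)$ and write $\g(t)=(t,f(t))$ with $f(t)=t+c_4t^4+O(t^5)$, $c_4\neq 0$; this is the normal form of a lightlike inflection of order~$2$ (an $A_3$-contact of $\g$ with its lightlike tangent line $y=x$). After the transform $T(x,y)=(x,x-y)$ the associated germ is $G_0(x,y)=y+c_4x^4+\cdots$.

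For~(i) I would compute the caustic from $u=\g-\lambda\,\g'^{\perp}$ with $\lambda=(1-f'^2)/f''$. Because $f''(0)=0$, one has $d_u''(0)=-2+2f'(0)^2+2(f(0)-u_2)f''(0)=0$ for \emph{every} $u$, while $d_u'(0)=2(u_1-u_2)$; hence the whole lightlike tangent line $\{u_1=u_2\}$ consists of degenerate critical points and lies in the caustic. The complementary branch satisfies $u_1+u_2=\tfrac{10}{3}t+O(t^2)$ and $u_1-u_2=\tfrac{5}{3}c_4t^4+O(t^5)$, so it is regular, has quartic contact with $y=x$, and stays on one side of it: a smooth curve with a lightlike inflection of order~$2$, as in Figure~\ref{fig:bif.IL(2)}~$\textcircled{1}$ (this also follows from \cite{reeve}).

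For~(ii) I would use both frameworks. By the Corollary following Theorem~\ref{teo:gener.trans}, FRL-genericity is equivalent to $G$ being a $\mathcal{K}_e^*$-versal deformation of $G_0$, the missing directions being spanned by $x$ and $x^2$; equivalently, since $d_0(t,0)=-t^2+f(t)^2=2c_4t^5+\cdots$ has an $A_4$ singularity, the family of distance squared functions is an $\Rl^+$-versal deformation of it, so the smooth part of the caustic is obtained as sections of the swallowtail $Bif(d)$ and undergoes swallowtail transitions. Working with the model $\a_u(t)=(t,(1+u_1)t+u_2t^2+t^4)$, I would stratify the $(u_1,u_2)$-plane. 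The lightlike points solve $4t^3+2u_2t+u_1=0$, whose discriminant vanishes on the cusp $C_{LI}\colon 27u_1^2+8u_2^3=0$ (three lightlike points inside, one outside). The non-lightlike inflections solve $f_s''=12t^2+2u_2=0$ (two for $u_2<0$, none for $u_2>0$), bifurcating across the line $u_2=0$. The vertices solve $g_s=240t^4+48u_2t^2-48u_1t+12u_2^2+\cdots=0$, whose double-root locus is $C_{LI}\cup C_V$ with $C_V\colon 5u_1^2=8u_2^3$ an upper cusp. In agreement with Theorem~\ref{teo:IL-V}, $C_{LI}$ is at once the lightlike-inflection stratum, the locus where two lightlike points merge, and (via $LI\subset V(2)$) a vertex-of-order-$2$ stratum.

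Reading off the triple (lightlike points, inflections, vertices) then gives $(1,0,0)$ inside $C_V$, $(1,0,2)$ elsewhere in $u_2>0$, $(3,2,0)$ inside $C_{LI}$, and $(1,2,2)$ elsewhere in $u_2<0$. I would assemble the caustic in each region from the local models already established: it is tangent to $\g_s$ at each ordinary lightlike point, runs to infinity along the normal at each ordinary inflection (Proposition~\ref{prop:inf-ord}), and has a cusp at each vertex (Proposition~\ref{prop:vert-ord}), the lightlike tangent line re-entering only on the $LI$-strata; the sign of $\kp_s\kp_s''$ at the critical points of $\kp_s$ decides whether each vertex is inward or outward. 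Matching these pieces along the swallowtail sections reproduces Figure~\ref{fig:bif.IL(2)}, and~(iii) is then immediate, since the argument uses only that $\g$ has a lightlike inflection of order~$2$ and that the family is FRL-generic, both of which $\a_u$ satisfies. The main obstacle is the behaviour along $C_{LI}$ and at the central $LI(2)=V(4)$ point: crossing $C_{LI}$, two lightlike points and two vertices merge at a lightlike inflection of order~$1$ (an instance of $LI\subset V(2)$), while at the origin these strata accumulate further. The delicate step is to follow the appearance and disappearance of the tangent-line component of the caustic across $C_{LI}$ and glue it continuously to the smooth swallowtail branch, and to pin down the global caustic in the $(3,2,0)$ region, where it is cut into several branches by the two inflections while passing tangentially through three lightlike points.
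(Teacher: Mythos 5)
Your computations check out where you carry them through: the discriminant of your vertex quartic $240t^4+48u_2t^2-48u_1t+12u_2^2+\cdots$ does split into the two tangential cusps $(u_1,u_2)=(40t^3,10t^2)$ (your $C_V$, $5u_1^2=8u_2^3$) and $(u_1,u_2)=(8t^3,-6t^2)$ (your $C_{LI}$, $27u_1^2+8u_2^3=0$), which agrees with the paper's $\mathcal{D}_{g_s}=(8c_4t^3+O(t^4),-6c_4t^2+O(t^3))$ and with the two tangential curves the paper obtains, and your region-by-region counts $(1,0,0)$, $(1,0,2)$, $(3,2,0)$, $(1,2,2)$ are correct and consistent with Theorems \ref{teo:IL-V} and \ref{teo:IL}. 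But your route to the $V(2)$ stratum is genuinely different from the paper's. The paper never computes a resultant of the model: it observes that the numerator $G$ of $\kp_s^\prime$ is a \emph{non}-$\Rl$-versal deformation of an $A_3$ singularity, writes $G(t,s)=F(a(t,s),b(s))$ with $F=x^4+ux^2+vx+w$ versal, normalizes $b_1=s_2$, $b_2=s_1$, computes $b_3(s)$ for a \emph{general} FRL-generic family (its expansion retains $c_5,c_6$ through the coefficient $(88c_4c_6-149c_5^2)/(16c_4^2)$), and then invokes the $\Rl(X)$-classification of submersions preserving the swallowtail (\cite{Arnold,Transversalidade,Meu-artigo}) to identify the slice of the swallowtail as two tangential curves. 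That machinery is precisely what makes the conclusion valid for every FRL-generic family at once.

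This exposes the one genuine gap in your proposal: you perform the decisive $V(2)$ computation only in the model $\a_u$ (so with $c_5=c_6=0$ and $\overline{c_3}\equiv 0$), and then declare (iii) immediate ``since the argument uses only the lightlike inflection of order $2$ and FRL-genericity.'' As written this is circular: transferring your stratification and configurations to an arbitrary FRL-generic family is exactly what (ii) in general and the FRL-equivalence in (iii) require, and your argument as given does not supply it. The repair is either the paper's induced-deformation/functions-on-a-swallowtail argument, or a direct check that the general family has the same discriminant topology --- e.g.\ noting that under the weights $t\sim\e$, $s_1\sim\e^3$, $s_2\sim\e^2$ the weight-$4$ principal part of the vertex function is $240c_4^2t^4+48c_4s_2t^2-48c_4s_1t+12s_2^2$, independent of all higher coefficients, so the two-tangential-cusp structure persists. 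Your appeal to $\Rl^+$-versality of the distance-squared family at the $A_4$ yields the local swallowtail sections of the caustic but does not by itself produce the $(s_1,s_2)$-stratification, so it cannot substitute for this step. A smaller point on (i): your own expansion gives $u_1-u_2=\tfrac{5}{3}c_4t^4+O(t^5)$ for the smooth branch while $\g$ satisfies $x-y=-c_4t^4+O(t^5)$, i.e.\ your formulas place the branch and the curve on \emph{opposite} sides of the lightlike tangent line; you should reconcile this explicitly with the half-plane asserted in the statement (the paper avoids the issue by quoting Theorem 3.6 of \cite{reeve}) rather than passing over the side determination in silence.
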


\begin{figure}[h!]
\centering
\includegraphics[scale = 0.65]{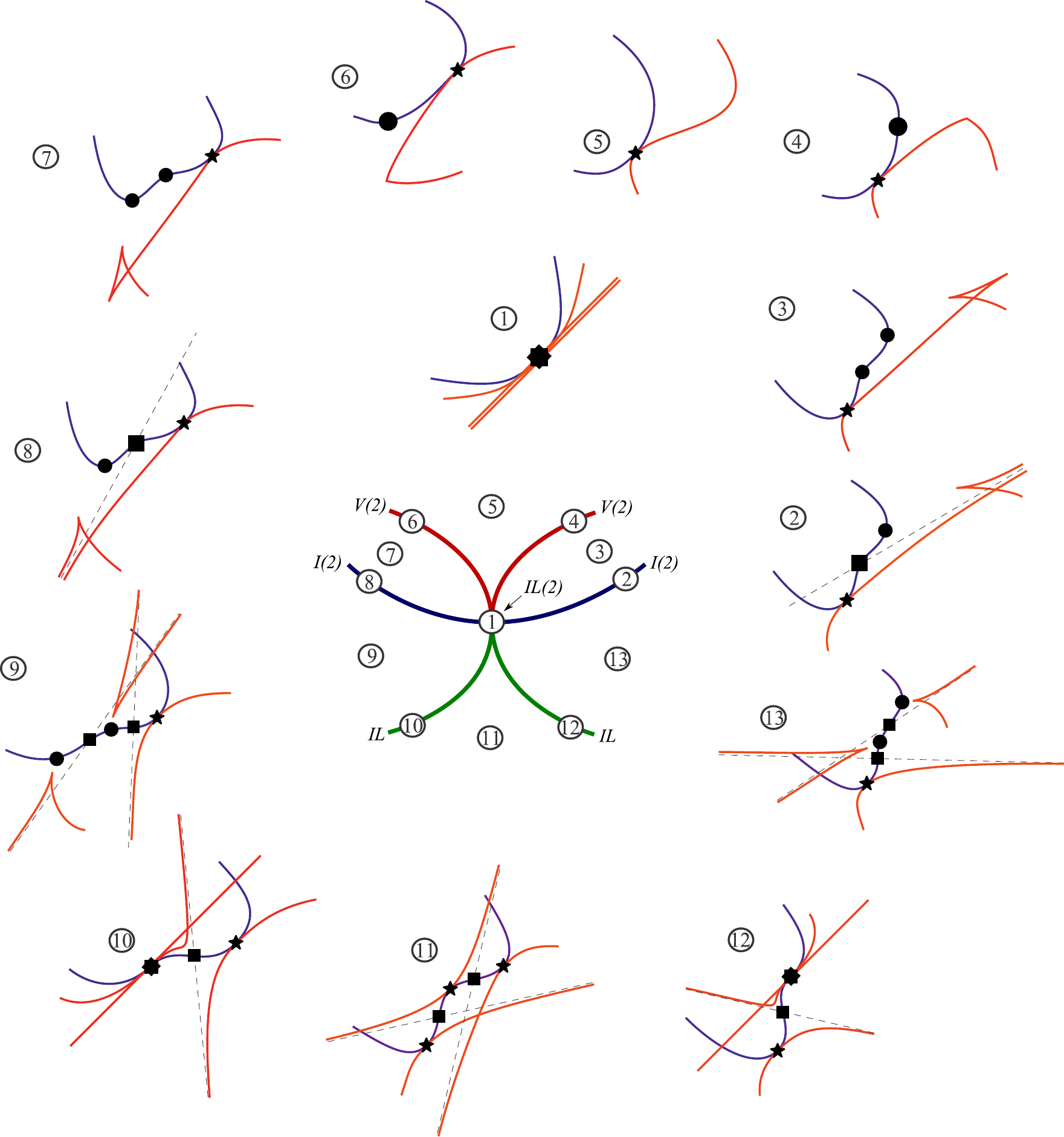}
\caption{\small{\textit{FRL}-generic deformations of a curve and its caustic at a lightlike inflection
of order $2$. The central figure is the bifurcation set in the
parameters space.}}\label{fig:bif.IL(2)}
\end{figure}

\begin{proof}
As $t_0$ is a lightlike inflection of order $2$, we take $\g(t) =
(t,t + c_4t^4 + c_5t^5 + c_6t^6 + c_7t^7 + O(t^8))$, with $c_4\neq
0$, at $t_0 = 0$.

 (i) The result is a consequence of the Theorem 3.6 in
\cite{reeve}.

(ii) Firstly, we analyze the deformations with respect to the
lightlike points and inflections. As $\g_s$ is an \textit{FRL}-generic
family, we can write $\g_s(t) = (t,f_s(t))$, with $f_s(t) = (1+
s_1)t + s_2t^2 + \overline{c_3}(s)t^3 + (c_4+\overline{c_4}(s))t^4 +
O_s(t^5)$ and $\overline{c_i}(0) = 0$ for $i = 3,4$. The zeros of
the function $g_s(t) = 1-f_s^\prime(t)$ give the lightlike points of
$\g_s$ and the singular points of $g_s$ give the inflections of
$\g_s$. We show that $g_s$ is an $\mathcal{R}^+$ and
$\mathcal{R}$-versal deformation of an $A_2$ singularity. Thus, we
have models for the zero, discriminant, singular and bifurcation
sets of $g_s$. We have
\begin{eqnarray}
\mathcal{D}_{g_s}: & & \{(8c_4t^3+O(t^4),-6c_4t^2+O(t^3))\in\R^2;\ t\in\R,0\}\\
Bif(g_s): & & \displaystyle\{(s_1,\frac{3c_{310}^2}{4c_4}s_1^2 +
O(s_1^3))\in\R^2;\ s_1\in\R,0\}.
\end{eqnarray}
where $\mathcal{D}_{g_s}$ denotes the discriminant set of $g_s$. Note that $\mathcal{D}_{g_s}$ and $Bif(g_s)$ correspond,
respectively, to the strata $LI$ and $I(2)$ of $\g_s$.

We obtain deformations of the lightlike points and inflections using
the deformations in the graph of $g_s$ (Figure
\ref{fig:bif.graf-curvatura-I3}).

For the vertices we use the family $G(t,s_1,s_2) =
f^{\prime\prime\prime}(t,s_1,s_2)(1-(f^\prime(t,s_1,s_2))^2) +
3f^\prime(t,s_1,s_2)(f^{\prime\prime}(t,s_1,s_2))^2$, whose
discriminant set is the stratum $V(2)$. The
function $G_0$ has an $A_3$ singularity at $t_0 = 0$. However, $G$
is not an $\Rl$-versal deformation of $G_0$ at $t_0$, it is
induced by the $\Rl$-versal deformation $F(x,u,v,w) = x^4 + ux^2 +
vx + w$, that is, there are germs of maps $a:\R\t\R^2,(0,0)\rt\R$
and $b:\R^2,0\rt\R^3,0$ such that $a(t,0) = \ph(t)$ is a
diffeomorphism and
$$G(t,s) =
F(a(t,s),b(s)) = (a(t,s))^4 + b_1(s)(a(t,s))^2 + b_2(s)a(t,s) +
b_3(s),$$ where $b = (b_1,b_2,b_3)$. Note that if $s$ belongs to the
discriminant set of $G$, then $b(s)$ belongs to the discriminant set
of $F$ which is a swallowtail.

We can set, $b_1(s_1,s_2) = s_2, \ b_2(s_1,s_2) = s_1$ and we get
$$b_3(s_1,s_2) = \b_1s_1^2 + \b_2s_1s_2 + \frac{5}{4}s_2^2 +
\b_3s_1^3 + \b_4s_1^2s_2 + \b_5s_1s_2^2 - (\frac{88c_4c_6 -
149c_5^2}{16c_4^2})s_2^3 + O_4(s_1,s_2).$$ Therefore, the image of
$b$ is the a graph of the function $b_3(s_2,s_1)$, and is also the
zero set of the function $h(x,y,z) = z - b_3(x,y)$. We can use the
classification of submersions from $\R^3$ to $\R$ up to diffeomorphisms in the source which preserve the swallowtail $X =
\mathcal{D}_F$ (the $\Rl(X)$-classification) obtained in 
\cite{Arnold, Transversalidade, Meu-artigo} and show that $h$ is
$\Rl(X)$-equivalent to
$$\overline{h}(x,y,z) = z - \frac{5}{4}x^2 + (\frac{88c_4c_6 -
149c_5^2}{16c_4^2})x^3.$$ Thus the intersection of the swallowtail
and the fiber $h^{-1}(0)$ is diffeomorphic to \linebreak $\{(2t^2,-8t^3,5t^4);
t\in \R\}\cup\{(-\frac{6}{5}t^2,-\frac{8}{5}t^3,\frac{9}{5}t^4);
t\in \R\}$ which consists of two tangential curves at the origin.

From Theorem \ref{teo:IL-V}, the
parametrization of $LI$ is part of the parametrization of $V(2)$.

Looking the intersection of the fiber of $\overline{h}$ and the
swallowtail we obtain the number of zeros of $G_{(s_1,s_2)}$, that
is, the number of vertices in each stratum. We conclude
that the \textit{FRL}-deformations of $\g$ are as shown in Figure
\ref{fig:bif.IL(2)}.

(iii) The statement follows from the fact that $\a_u$ is an
\textit{FRL}-generic deformation at a lightlike inflection of order $2$ and
the calculations in (ii) depend only on the fact that the curve has
a lightlike inflection of order $2$ and on the deformation being
\textit{FRL}-generic.
\end{proof}

\section{\bf Geometric deformations of curves with a non-lightlike ordinary cusp}

Suppose that $\g(t) =
(\a(t),\b(t))$ has a cusp singularity at the origin and that the limiting tangent direction to $g$ at that point is timelike (the spacelike case follows analogously). We call this singularity \textit{a non-lightlike cusp}. Here, the Monge-Taylor map does not intersect the stratum involving lightlike point, so an \textit{FRLS}-generic family will be called \textit{FRS}-generic. Denote by 
$(a_1,\dots,a_k;b_1,\dots,b_k)$ the coordinates in $J^k(1,2)$. The
strata of interest are:
\begin{eqnarray}
\mbox{Cusp \ ($C$)}: & & a_1 = 0 \ \text{and} \ b_1 = 0
\nonumber\\
\text{Lightlike \ ($L$)}: & & a_1 \pm b_1 = 0 \nonumber\\
\text{Inflections \ ($I$)}: & & a_1b_2 - a_2b_1 = 0  \nonumber\\
\text{Vertices \ ($V$)}: & & (a_1^2-b_1^2)(a_1b_3-a_3b_1) +
2(b_1b_2-a_1a_2)(a_1b_2-a_2b_1) = 0\nonumber
\end{eqnarray}

Suppose $\g$ has
an ordinary cusp singularity at $t_0=0$, that is, $\g$ is
$\mathcal{A}$-equivalent to $(t^2,t^3)$. We can take, without loss of generality, $\g^{\prime\prime}(0)$ parallel to $(1,0)$ and write $\g(t)
= (t^2,\b(t)) = (t^2, c_3t^3 + O(t^4))$, with $c_3\neq 0$. The
strata $I, V$ and $L$ are submanifolds of $J^k(1,2)$ of codimension
1 which contain the submanifold $C$ of codimension 2. The stratum
$I$ is smooth along $C$, the stratum $L$ is the union of two
transverse components $L_1$ and $L_2$, represented, respectively, by
the equations $a_1 + b_1 = 0$ and $a_1 -b_1 = 0$, and the stratum
$V$ is the union of two smooth submanifolds $V_1$ and $V_2$ which
intersects transversally along $C$.

We can write any $1$-parameter family of curves $\g_s$ with $\g = \g_0$
in the form
\begin{equation}\label{eq:01}
\g_s(t) = (t^2, \b(t,s)) = (t^2,\overline{c_1}(s)t +
\overline{c_2}(s)t^2 + \overline{c_3}(s)t^3 + O_s(t^4)),
\end{equation}
with $\overline{c_1}(0) = \overline{c_2}(0) = 0$, $\overline{c_3}(0)
= c_3\neq 0$, $\overline{c_k}(0) = c_k$ for $k\> 4$ and $s\in\R,0$.

Our goal is to find conditions to the \textit{FRS}-genericity of $\g_s$. Note
that if the image of $d(j^k\Phi)(0,0)$ is transverse to the stratum
$C$, then it will be transverse to $I, L$ and $V$. The family of
Monge-Taylor maps $j^k\Phi$ is transverse to the stratum $C$ if, and
only if, $\displaystyle\frac{\partial^2 \b}{\partial s\partial
t}(0,0) \neq 0$, which is exactly the condition for $\g_s$ to be an
$\mathcal{A}_e$-versal deformation of the ordinary cusp singularity
of $\g$.

\begin{corollary}
Let $\g$ be a germ of a non-lightlike ordinary cusp and $\g_s$ be an
$1$-parameter family of curves with $\g_0 = \g$. Then $\g_s$
is an \textit{FRS-generic} deformation of $\g$ if, and only if, $\g_s$ is an
$\mathcal{A}_e$-versal deformation of $\g$.
\end{corollary}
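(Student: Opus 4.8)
The plan is to reduce the entire bundle of transversality conditions defining FRS-genericity to a single transversality requirement at the deepest stratum $C$, and then to recognize that requirement as the $\mathcal{A}_e$-versality condition. By Definition \ref{def:FRLS-def.}, the family $\g_s$ in (\ref{eq:01}) is FRS-generic exactly when the family of Monge-Taylor maps $j^k\Phi$ is transverse at $(0,0)$ to the $\mathcal{A}$-invariant stratum $C$ and to the geometric strata $I$, $L$, $V$ together with their intersections. Since $\g$ has an ordinary cusp at $t_0=0$, the Monge-Taylor map of $\g_0$ sends the cusp into $C$, which is the unique stratum of codimension $2$; every other relevant stratum contains $C$ and has codimension $1$. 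Thus FRS-genericity certainly forces transversality to $C$, and the whole content of the corollary is the converse reduction plus an identification of conditions.

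First I would make the reduction precise. The tangent space $T_{(0,0)}C$ has codimension $2$, with normal $2$-plane $N$ spanned by the $a_1$- and $b_1$-directions, so transversality to $C$ is equivalent to the projection of $\mathrm{Im}\,d(j^k\Phi)(0,0)$ onto $N$ being surjective. I would then show that each of the hypersurfaces $I$, $L_1$, $L_2$ and the two branches $V_1, V_2$ of $V$ has, at the cusp point, a nonzero conormal covector lying in $N^{*}=(T_{(0,0)}C)^{\perp}$: for $I$ this conormal is $b_2\,da_1-db_1$, for $L_1,L_2$ it is $da_1\pm db_1$, and for $V_1,V_2$ it is read off from the leading quadratic part $2(b_1b_2-a_1)(a_1b_2-b_1)$ of the vertex equation after substituting $a_2=1$, $a_3=0$. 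Since any such conormal vanishes on $T_{(0,0)}C$ and restricts to a nonzero functional on $N$, surjectivity onto $N$ forces the image to pair nontrivially with it, so transversality to $C$ automatically yields transversality to $I$, $L$ and $V$ — exactly the implication asserted in the discussion preceding the statement. I expect the normal-form bookkeeping for $V$ to be the main technical obstacle, because $V$ is singular along $C$ and must first be split into its two smooth branches before its conormals can be computed.

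Finally I would make transversality to $C$ explicit. From $\alpha(t)=t^2$ we have $a_1=2t$, $a_2=1$, $a_i=0$ for $i\geq 3$, while $b_i=\tfrac{1}{i!}\partial_t^i\b(t,s)$; consequently the projection of $d(j^k\Phi)(0,0)$ onto the $(a_1,b_1)$-plane is the matrix with rows $(2,0)$ and $(0,\tfrac{\partial^2\b}{\partial s\partial t}(0,0))$, which is surjective onto $N$ if and only if $\tfrac{\partial^2\b}{\partial s\partial t}(0,0)\neq 0$. I would then invoke the standard versality criterion for the ordinary cusp, whose $\mathcal{A}_e$-codimension is $1$: the initial velocity of $\g_s$ complements the extended tangent space of $\g$ precisely when this same mixed derivative is nonzero. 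Chaining the equivalences, FRS-genericity $\Leftrightarrow$ transversality to $C$ $\Leftrightarrow$ $\tfrac{\partial^2\b}{\partial s\partial t}(0,0)\neq 0$ $\Leftrightarrow$ $\mathcal{A}_e$-versality, which is the claimed statement.
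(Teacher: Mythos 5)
Your proposal is correct and takes essentially the same route as the paper, whose entire justification for the corollary is the discussion preceding it: transversality of the family of Monge--Taylor maps to the codimension-$2$ stratum $C$ automatically yields transversality to the hypersurfaces $I$, $L_1$, $L_2$ and the two smooth branches $V_1$, $V_2$ of $V$ (all of which contain $C$), and transversality to $C$ holds if and only if $\frac{\partial^2\beta}{\partial s\,\partial t}(0,0)\neq 0$, which is exactly the $\mathcal{A}_e$-versality condition for the ordinary cusp. Your conormal argument and explicit Jacobian computation simply fill in the details the paper asserts without proof.
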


\begin{theorem}\label{teo:bif-cusp}
Let $\g$ be a germ of a non-lightlike ordinary cusp at $t_0$.
\begin{itemize}
\item[\text{(i)}] The caustic of $\g$ is the union of the limiting normal line of $\g$ at $t_0$
and a smooth curve which has ordinary tangency at $\g(t_0)$ with the
limiting normal line of $\g$ at $t_0$. Moreover, that curve lies in
the semi-plane determined by the normal line of $\g$ at $t_0$ which
does not contain the curve $\g$, see Figure
\ref{fig:def-cusp} center.

\item[\text{ (ii)}] For an \textit{FRS}-generic $1$-parameter family $\g_s$
with $\g_0=\g$, the bifurcations in the curve and its caustic are
as shown in Figure \ref{fig:def-cusp}.

\item[\text{(iii)}] Any \textit{FRS}-generic $1$-parameter family of $\g$ is \textit{FRS}-equivalent to the model family $\a_u(t) = (t^2, ut + t^3)$.

\end{itemize}
\end{theorem}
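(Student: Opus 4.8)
The plan is to follow the three-step scheme already used for the inflection cases (Theorems \ref{teo:I(2)} and \ref{teo:IL}): compute the caustic of the undeformed cusp in a normal form for (i), use the versality furnished by the preceding Corollary to control the bifurcations for (ii), and transfer everything to the model family for (iii). For (i) I would fix the normal form $\g(t)=(t^2,c_3t^3+O(t^4))$ with $c_3\neq 0$ and compute $Bif(d)$ directly from the formula in Section 2. The cusp point $t=0$ is special: since $\g^\prime(0)=0$ one has $d_u^\prime(0)=0$ for every $u$, while $d_u^{\prime\prime}(0)=4u_1$, so $d_u^{\prime\prime}(0)=0$ cuts out exactly the limiting normal line $\{u_1=0\}$, which is the line component of the caustic. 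For $t\neq 0$ the curve is regular, and solving $\lambda\l\g^\prime(t)^\perp,\g^{\prime\prime}(t)\r+\l\g^\prime(t),\g^\prime(t)\r=0$ gives $\lambda(t)=\frac{2}{3c_3}+O(t)$, whence the branch $e(t)=(-t^2+O(t^3),-\frac{4}{3c_3}t+O(t^2))$. Eliminating $t$ exhibits this as a smooth curve having an ordinary (quadratic) tangency with the normal line at $\g(t_0)$ and lying in $\{u_1\leq 0\}$, i.e. on the side of the normal line opposite to $\g\subset\{u_1\geq 0\}$. This matches Figure \ref{fig:def-cusp} and also follows from Theorem 3.6 in \cite{reeve}.

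For (ii) the preceding Corollary lets me assume $\g_s$ is an $\mathcal{A}_e$-versal deformation of the ordinary cusp, so up to the equivalence of Definition \ref{def:FRLS-equivalent} the singular part of the transition is the standard one: on one side $\g_s$ is a smooth embedded arc and on the other it acquires a transverse node, with the cusp at $s=0$. I then pull the geometric strata back by the family of Monge-Taylor maps and compute, in the $(t,s)$-plane, the curves cut out by the strata $I$, $L=L_1\cup L_2$ and $V=V_1\cup V_2$; all pass through the origin, and the model computation gives $I:s=3t^2$, $L_{1,2}:s=\pm 2t+O(t^2)$, and the two transverse branches of $V$ (one tangent to $\{t=0\}$, one to $\{s=0\}$). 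Intersecting a line $s=\text{const}$ with these curves counts the inflections, lightlike points and vertices of $\g_s$: two inflections on the smooth-arc side and none on the node side, two lightlike points on both sides, and the vertices distributed according to the two branches of $V$. Feeding these data, together with Propositions \ref{prop:vert-ord} and \ref{prop:inf-ord} and the lightlike-point behaviour from Section 3, into part (i) reconstructs the configuration of $\g_s$ and of its caustic; the bifurcation type itself I would confirm by checking that the family of distance squared functions $d(t,u,s)$ is an $\mathcal{R}^+$-versal unfolding of the $A_3$ singularity that $d_0$ has at the cusp, so that the caustics of $\g_s$ are the generic transversal sections of the resulting bifurcation set. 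This assembles Figure \ref{fig:def-cusp}.

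For (iii) I would note that every step above uses only that $t_0$ is a non-lightlike ordinary cusp and that the family is FRS-generic; since $\a_u(t)=(t^2,ut+t^3)$ has an ordinary cusp at $u=0$ and its Monge-Taylor family is transverse to the stratum $C$ (hence is $\mathcal{A}_e$-versal and therefore FRS-generic), Definition \ref{def:FRLS-equivalent} gives that any FRS-generic one-parameter family is FRS-equivalent to $\a_u$.

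The step I expect to be the main obstacle is the caustic bifurcation in (ii). Because the caustic is not smooth --- it is the union of a line tied to the cusp point and a smooth branch tangent to it --- I must track how the line component behaves as the cusp opens into a node or a smooth arc, while keeping the cusps of the smooth branch (produced by the vertices, with inward/outward type from Proposition \ref{prop:vert-ord}) and its asymptotic branches (produced by the inflections, via Proposition \ref{prop:inf-ord}) consistent with the counts read off from the strata. Pinning down the correct versal unfolding of $d$ and the matching transversal section is the delicate point; the remaining verifications are routine but require careful bookkeeping.
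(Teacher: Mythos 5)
Your part (i), the strata pull-back counting in (ii), and the argument for (iii) are essentially the paper's own route: the paper likewise factors $t^2$ out of $d_u^\prime = d_u^{\prime\prime} = 0$ to obtain the line $u_1=0$ together with the branch $(-t^2+O(t^3),-\frac{4}{3c_3}t+O(t^2))$, and it reads the numbers and relative positions of inflections, vertices and lightlike points off a $2$-dimensional section of the stratification by $C,L,I,V$, with the Monge--Taylor image tangent to $I$ and $V_1$ and transverse to $L$ and $V_2$. The step of yours that fails is precisely the one you propose as the confirmation of the caustic transition: the three-parameter family $d(t,u,s)$ is \emph{not} an $\mathcal{R}^+$-versal unfolding of the $A_3$ singularity of $d_0$ at the cusp. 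Since $d_u^\prime(t)=2\langle\g^\prime(t),\g(t)-u\rangle$ and $\g^\prime(0)=0$, we have $d_u^\prime(0)=0$ for \emph{every} $u$ (and every $s=0$ member); the initial velocities $\frac{\partial d}{\partial u_1}=2(t^2-u_1)$, $\frac{\partial d}{\partial u_2}=-2(\b(t)-u_2)$ and $\frac{\partial d}{\partial s}\big|_{0}=2\b(t)\,(t+O(t^2))$ are, modulo constants, $2t^2$, $-2c_3t^3+O(t^4)$ and $O(t^4)$, so no unfolding direction produces a term linear in $t$, while $d_0^\prime=-4t^3+O(t^5)$; the monomial $t$ is therefore never generated and the $\mathcal{R}^+$-versality condition fails. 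This is not a coordinate accident: your own part (i) shows the caustic contains the whole limiting normal line, a configuration that cannot arise as a generic transversal section of the swallowtail, so the caustics of $\g_s$ are not ``generic sections of the resulting bifurcation set''. The paper never invokes versality of $d$ at the cusp; it assembles the caustic transition purely from the counts together with the local models (Propositions \ref{prop:vert-ord} and \ref{prop:inf-ord}, the lightlike-point behaviour of Section 3, and item (i) applied on the cusp stratum).

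Beyond this, two pieces of bookkeeping that your outline omits are needed to verify condition (iii) of Definition \ref{def:FRLS-equivalent} and are carried out explicitly in the paper. First, the position of the node relative to the lightlike points is multi-local and cannot be read off the local strata: the paper solves $\b_s(t)=\b_s(-t)$ to locate the self-intersection at $t^2=-\frac{1}{c_3}s+O(s^2)$ and $\b_s^\prime(t)=\pm 2t$ to locate the lightlike points at $t=\pm\frac{1}{2}s+O(s^2)$; since $|s|/2$ is much smaller than $\sqrt{|s/c_3|}$ for small $s$, the lightlike points lie between the two preimages of the node. Second, the inward/outward type of each vertex requires the sign of $\kp_s\kp_s^{\prime\prime}$, not just Proposition \ref{prop:vert-ord}: the paper determines the sign of $\kp_s$ from $\langle\g_s^{\prime\prime},\g_s^{\prime\perp}\rangle$, using that this sign does not change across a lightlike point, and obtains the curvature-graph transitions (one inward vertex on the node side; a central inward vertex flanked by two outward ones, three vertices in total, on the other side). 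With these two computations added, and the versality claim for $d$ removed, your outline matches the paper's proof.
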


\begin{figure}[h!]
\centering
\includegraphics[scale = 0.87]{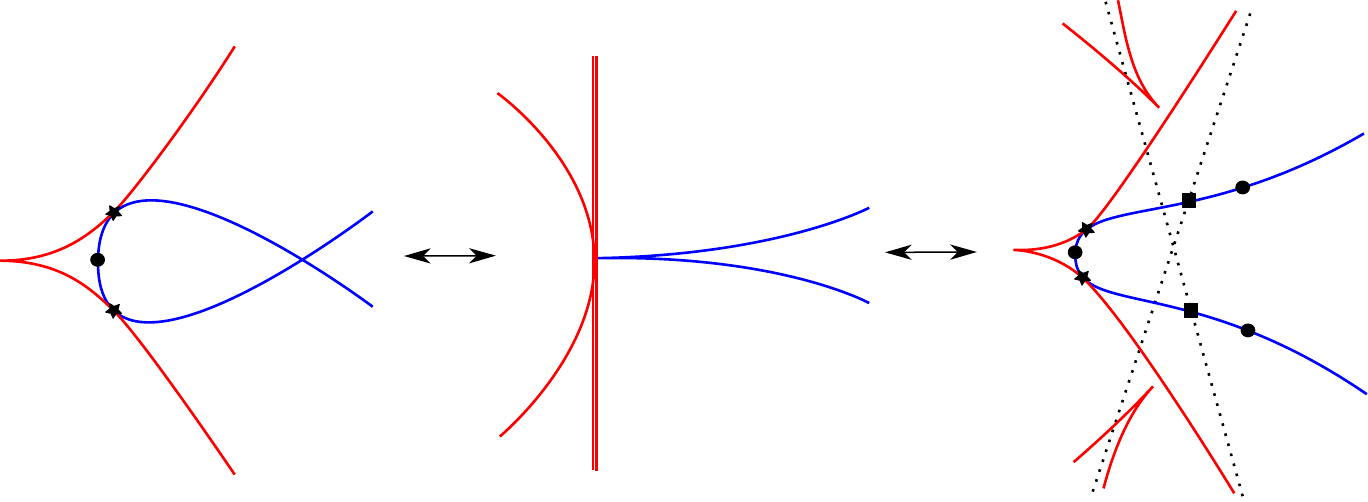}
\caption{\small{\textit{FRS}-generic deformations of a curve and its caustic at a non-lightlike ordinary
cusp.}}\label{fig:def-cusp}
\end{figure}

\begin{proof}
\text{(i)} We have
$$d_{u}(t) = -(t^2-u_1)^2 + (\b(t)-u_2)^2 = -t^4 + 2u_1t^2
+ -u_1^2 + \b^2(t) - 2u_2\b(t) + u_2^2.$$ Isolating $u_1$ in
$d_{u}^{\prime\prime} = 0$ and substituting in $d_u^\prime = 0$ we
get
$$t^2(4t + u_2\frac{(-\b^\prime(t) + \b^{\prime\prime}(t)t)}{t^2} + \frac{\b(t)(\b^\prime(t) - \b^{\prime\prime}(t)t) - {\b^\prime}^2(t)t}{t^2}) = 0.$$
As $t = 0$ is a root of the above equation, the line $u_1 = 0$ is
part of the caustic of $\g$.

For $t\neq 0$, we have $u_2 = \displaystyle \frac{4t^3 +
(-\b(t)\b^{\prime\prime}(t) - {\b^\prime}^2(t))t +
\b(t)\b^{\prime\prime}(t)}{\b^\prime(t) - \b^{\prime\prime}(t)t}$.

Therefore, the caustic of $\g$ is the union of the line $u_1 = 0$ and
the curve parametrized by $\displaystyle t\mapsto (-t^2 + O(t^3),
\frac{-4}{3c_3}t + O(t^2))$.

\text{ (ii)} We take $\g_s(t) = (t^2,\b(t,s)) = (t^2, st +
\overline{c_2}(s)t^2 + \overline{c_3}(s)t^3 + O_s(t^4))$.



We first take the $2$-dimensional section of the stratification of
$J^k(1,2)$ by $C,L,I,V$ obtained by fixing
$(a_2,a_3,\dots,a_k;b_2,b_3,\dots,b_k) =
(1,0,\dots,0;0,c_3,\dots,c_k)$, with $c_3\neq 0$. In this section
the stratum $I$ is given by $b_1 = 0$ and $V$ by $a_1=0$ union
$\lb(a_1^2-b_1^2) + 2b_1 = 0$. The second component of $V$ is a regular curve with tangent direction at the origin is parallel to
the vector $(1,0)$. Therefore, the stratification of the $2$-dimensional section is as
shown in Figure \ref{fig:estratos-cusp} center. Moreover,
$(j^k\phi_{\g_0})^\prime(0) =
(2,0,\dots,0;0,3c_3,\dots,(k+1)c_{k+1})$, which is tangent to the
strata $I$ and $V_1$ and it is transversal to $V_2$ and $L$.
Therefore, the relative position of $j^k\phi_{\g_s}$ with respect to
the strata $C,L,I,V$ is given in Figure \ref{fig:estratos-cusp}.

\begin{figure}[h!]
\centering
\includegraphics[scale = 0.75]{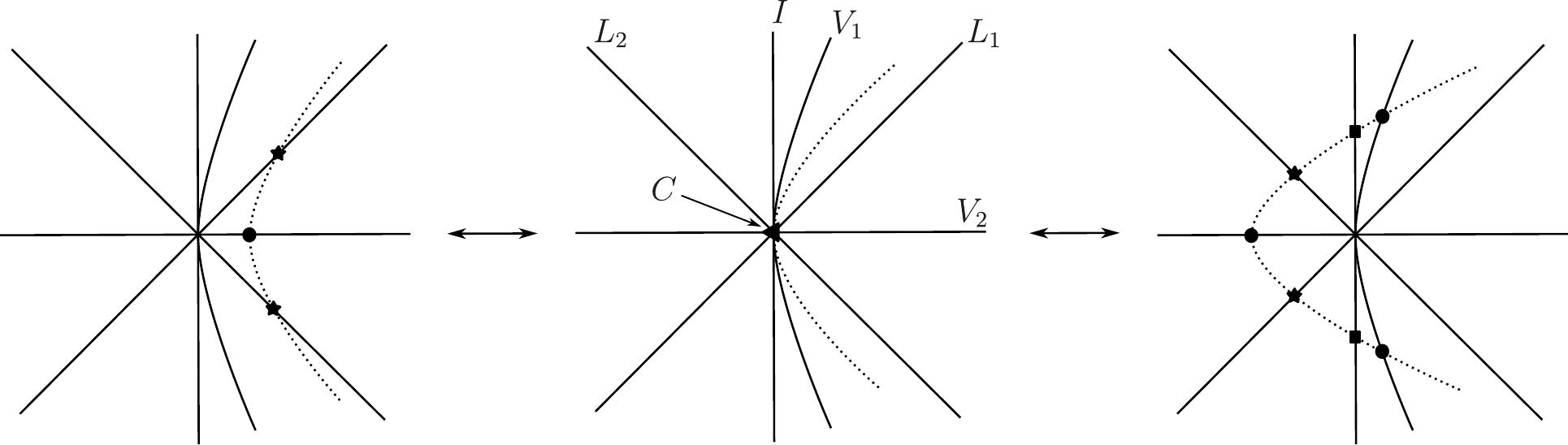}
\caption{\small{Transversal $2$-dimensional section of the
stratification of $J^k(1,2)$ by $C, L, I, V$ and the relative
position of the image of the Monge-Taylor map of $\g_s$ for $s<0$ in
left, $s=0$ in center and $s>0$ in right (case $c_3>
0$).}}\label{fig:estratos-cusp}
\end{figure}

From Figure \ref{fig:estratos-cusp}, we get the number and the
relative position of lightlike points, inflections and vertices,
but we do not obtain their relative position with respect to the
self-intersection points. The self-intersection points of $\g_s$ are
given by $t_1^2 = t_2^2$ and $\b_s(t_1) = \b_s(t_2)$ with $t_1\neq
t_2$, that is, $t_1 = -t_2 = t$ and $\b_s(t) = \b_s(-t)$. Consider
$h(t,s) = \displaystyle\frac{1}{2}(\b_s(t)-\b_s(-t)) = st +
\overline{c_3}(s)t^3 + O_s(t^5) = t(s + \overline{c_3}(s)t^2 +
O_s(t^4)) = t\overline{h}(t,s)$. Hence, the self-intersection points
of $\g_s$ are determined by the zeros of $\overline{h}$. As $c_3\neq
0$, $t^2 = \displaystyle -\frac{1}{c_3}s + O(s^2)$ and the two
solutions of this equation are symmetric and give the values of
$t_1$ and $t_2$ such that $\g_s(t_1) = \g_s(t_2)$. Therefore, $\g_s$ has a self-intersection point in the side
of the transition which contains one vertex and two lightlike
points.

The lightlike points are given by $\displaystyle\b_s^\prime(t) = \pm
2t$. Considering the families $h^{\mp}(t,s) = \b_s^\prime(t)\mp 2t =
s + 2(\overline{c_2}(s)\mp 1)t + 3\overline{c_3}(s)t^2 + O_s(t^3)$,
we obtain that the zeros of $h_s^{\mp}$ and, consequently, the
lightlike points of $\g_s$, are given by $t = \displaystyle \pm
\frac{1}{2}s + O(s^2)$.

Comparing the defining equations of the lightlike and
self-intersection points, we conclude that the lightlike points are
between the self-intersection points.

Note that for $s>0$, the curve $\g_s$ has $2$ inflections ($\kp_s =
0$), $3$ vertices ($\kp_s^\prime = 0$) and $2$ lightlike points
(where $\kp_s$ goes to infinity). Moreover, the signal of $\kp_s$ is the same of $\l \g^{\prime\prime}_s,\g_s^{\prime\perp} \r$. As $t=0$ is between the lightlike
points and the signal of $\l \g^{\prime\prime}_s,\g_s^{\prime\perp} \r$ does not change when we cross a lightlike point, then $\kp_s >0$ between the inflections and $\kp_s<0$ outside.
Hence, the graph of $k_s$ is as in Figure
\ref{fig:graf-curv} right. Therefore, the central vertex is inward
and the others are outward.

For $s = 0$, we have $\kp_0(t) = \displaystyle\frac{6c_3t^2 +
O(t^3)}{|4t^2 -9c_3^2t^4 + O(t^5)|^\frac{3}{2}} =
\frac{3b_3}{4}\frac{1}{|t|}( 1 + O(t))$. Thus, the graph of $\kp_0$ is as in Figure \ref{fig:graf-curv} center.

For $s<0$, the curve $\g_s$ has $1$ vertex, $2$ lightlike points and
no inflections. As $k_s(0)< 0$, then $k_s<0$. Hence, the graph of
$\kp_s$ is as in Figure \ref{fig:graf-curv}
left. We have $\kp_s\kp_s^{\prime\prime}
>0$ so the vertex is inward.

\begin{figure}[h!]
\centering
\includegraphics[scale = 0.8]{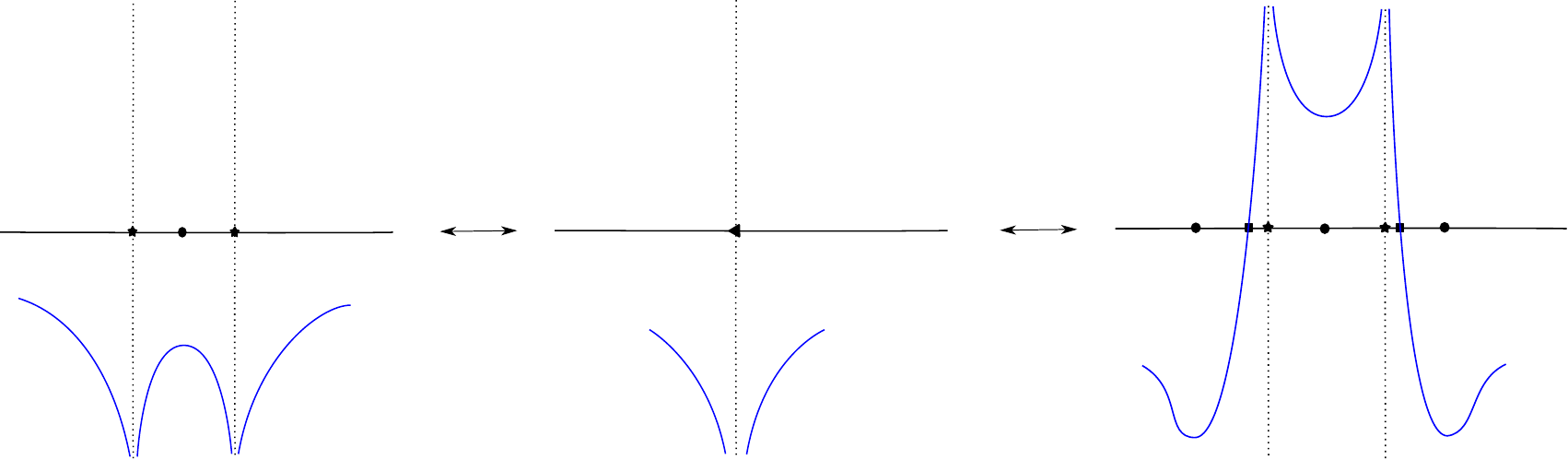}
\caption{\small{Transitions in the graph of the curvature function
of $\g_s$.}}\label{fig:graf-curv}
\end{figure}

From the above analysis and the results in the
previous sections we conclude that the bifurcations in the caustic
of $\g_s$ are as in Figure \ref{fig:def-cusp}.

\text{ (iii)} The results in (ii) depend only on the
fact that the curve has a non-lightlike ordinary cusp
and on the family being \textit{FRS}-generic. The family $\a_u$ satisfies such
conditions, consequently, it is a model for \textit{FRS}-generic
deformations.
\end{proof}

\section{\bf Geometric deformations of curves with a lightlike ordinary cusp}

Consider $\g$ a plane curve with a lightlike ordinary cusp
singularity at $t_0 = 0$ (i.e., its limiting tangent direction at $t_0$ is lightlike). Here we need to consider multilocal
strata and stratify the
multi-jet space $ _2J^k(1,2)\subset J^k(1,2)\t J^k(1,2)$. We take the Monge-Taylor bi-jet map as
in (\ref{Monge-Taylor:multi}). Denote by
$((a_0,a_1,\dots,a_k;b_0,b_1,\dots,b_k);\linebreak (a_0^\prime,a_1^\prime,\dots,a_k^\prime;b_0^\prime,b_1^\prime,\dots,b_k^\prime))$
the coordinates in $J^k(1,2)\t J^k(1,2)$.

The local strata in $J^k(1,2)$ are viewed product strata in $
_2J^k(1,2)$, consequently, it is enough to compute
them in $J^k(1,2)$. We have:
\begin{itemize}
\item[$LC$:] $a_1 = 0, \ b_1 = 0,\ a_2 - b_2 = 0$
\item[$C$:] $a_1 = 0,\ b_1 = 0$
\item[$I(2)$:] $a_1b_2 - a_2b_1 = 0,\ a_1b_3 - a_3b_1 = 0$
\item[$LI$:] $a_1b_2 - a_2b_1 = 0,\ a_1 \pm b_1 = 0$
\item[$V(2)$:] $(a_1^2-b_1^2)(a_1b_3-a_3b_1) +
2(b_1b_2-a_1a_2)(a_1b_2-a_2b_1) = 0,\
(-a_1^2+b_1^2)(2(a_1b_4-a_4b_1) + a_2b_3 - a_3b_2)-
(a_1b_3-a_3b_1)(b_1b_2-a_1a_2)-(a_1b_2-a_2b_1)(3(b_1b_3-a_1a_3)+2(b_2^2-a_2^2))
= 0$
\item[$IT$:] $a_0 - a_0^\prime = 0,\ b_0 - b_0^\prime = 0,\
a_1b_2 - a_2b_1 = 0$
\item[$LT$:] $a_0 - a_0^\prime = 0,\ b_0 -
b_0^\prime = 0,\ a_1 \pm b_1 = 0$
\item[$Tc$:] $a_0 - a_0^\prime = 0,\ b_0 - b_0^\prime = 0,\
a_1b_1^\prime - a_1^\prime b_1 = 0$
\item[$VT$:] $a_0 - a_0^\prime = 0,\ b_0 - b_0^\prime = 0,\
(a_1^2-b_1^2)(a_1b_3-a_3b_1) + 2(b_1b_2-a_1a_2)(a_1b_2-a_2b_1) = 0.$
\end{itemize}

We take $\g(t) =
(t^2,t^2 + c_3t^3 + c_4t^4 + O(t^5))$, with $c_3\neq 0$.

Let $\g_s$ be a $2$-parameter deformation of $\g$ which we can take in the form
\begin{equation}\label{eq:def:cusp:light}
\g_s(t) = (t^2,\overline{c_1}(s)t + (\overline{c_2}(s)+1)t^2 +
\overline{c_3}(s)t^3 + \overline{c_4}(s)t^4 + O_s(t^5)),
\end{equation}
with $\overline{c_1}(0) = \overline{c_2}(0) = 0$, $\overline{c_3}(0)
= c_3\neq 0$ and $\overline{c_k}(0) = c_k$, for $k\> 4$.

\begin{theorem}\label{prop:trans:cusp:light}
Consider $\g_s$ a $2$-parameter deformation of $\g$ as in
{\rm(\ref{eq:def:cusp:light})}. Then the family of Monge-Taylor
bi-jet maps $ _2j^k\Phi$ is transverse to the strata $LC$, $C$,
$I(2)$, $LI$, $V(2)$, $IT$, $LT$, $VT$ and $Tc$ if, and only if,
$\frac{\partial\overline{c_1}}{\partial s_1}(0,0)\neq 0$ and
$\frac{\partial \overline{c_1}}{\partial s_1}(0,0)\frac{\partial
\overline{c_2}}{\partial s_2}(0,0) - \frac{\partial
\overline{c_1}}{\partial s_2}(0,0)\frac{\partial
\overline{c_2}}{\partial s_1}(0,0)\neq 0$.

\end{theorem}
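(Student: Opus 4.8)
The plan is to treat the nine strata in two groups: the \emph{local} strata $LC$, $C$, $I(2)$, $LI$, $V(2)$, which constrain a single branch and so appear in $_2J^k(1,2)$ as product strata, and the \emph{multilocal} strata $IT$, $LT$, $VT$, $Tc$, which each impose the self-intersection equations $a_0-a_0'=0$, $b_0-b_0'=0$ together with one further condition. For the local group I would first record that at the cusp point $a_1=b_1=0$, and every defining equation of $I(2)$, $LI$, $V(2)$ is a sum of terms each carrying a factor $a_1$ or $b_1$; hence all of them vanish there, and as germs of submanifolds at $_2j^k\phi_{\g_0}(0,0)$ one gets the inclusions $LC\subset C\subset I(2),\,LI,\,V(2)$. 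Since $A\subset B$ gives $T_pA\subseteq T_pB$, transversality to the smallest stratum $LC$ automatically yields transversality to all the larger local strata; and because these are product strata, transversality of $_2j^k\Phi$ to them is equivalent to transversality of the single-branch family $j^k\Phi(t,s)$ to $LC$ in $J^k(1,2)$.

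Next I would compute that single transversality explicitly. Using $\a(t)=t^2$ (so $a_1=2t$, $a_2=1$, $a_3=0$) and the expansion of $\b(t,s)$ from (\ref{eq:def:cusp:light}), the map cutting out $LC$ is $(t,s)\mapsto(a_1,\,b_1,\,a_2-b_2)=(2t,\ \b'(t,s),\ 1-\tfrac12\b''(t,s))$, and its differential at the origin, with rows the partials of $a_1,b_1,a_2-b_2$ and columns the partials in $t,s_1,s_2$, is the $3\t3$ matrix
\[
\begin{pmatrix}
2 & 0 & 0\\
2 & \frac{\partial \overline{c_1}}{\partial s_1} & \frac{\partial \overline{c_1}}{\partial s_2}\\
-3c_3 & -\frac{\partial \overline{c_2}}{\partial s_1} & -\frac{\partial \overline{c_2}}{\partial s_2}
\end{pmatrix},
\]
whose determinant equals $-2\big(\frac{\partial \overline{c_1}}{\partial s_1}\frac{\partial \overline{c_2}}{\partial s_2}-\frac{\partial \overline{c_1}}{\partial s_2}\frac{\partial \overline{c_2}}{\partial s_1}\big)$. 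Thus $_2j^k\Phi$ is transverse to $LC$, hence to all of $C$, $I(2)$, $LI$, $V(2)$, if and only if the second inequality in the statement holds.

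The multilocal group is where the real difficulty lies. A direct check shows that $_2j^k\Phi$ degenerates along the diagonal: since $\a=t^2$ the condition $a_0=a_0'$ forces $t_2=\pm t_1$, and the differentials of $a_0-a_0'$ and $b_0-b_0'$ vanish at $t_1=t_2=0$, $s=0$, so the composite defining map of each multilocal stratum has non-surjective differential there and no transversality can be read off. The remedy I would use is to resolve the diagonal. On the genuine double-point locus $t_2=-t_1=:-t$ the condition $b_0=b_0'$ becomes $\b_s(t)-\b_s(-t)=2t\big(\overline{c_1}(s)+\overline{c_3}(s)t^2+O_s(t^4)\big)=0$, so after dividing out the diagonal the node locus is $R(t,s):=\overline{c_1}(s)+\overline{c_3}(s)t^2+O_s(t^4)=0$. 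The first inequality $\frac{\partial \overline{c_1}}{\partial s_1}(0,0)\neq0$ is exactly what lets one solve $R=0$ for $s_1$ and exhibit the node locus as a graph over $(t,s_2)$, unfolded versally by the parameters; this is the Minkowski analogue of the $\mathcal{A}_e$-versality of the ordinary cusp used in the non-lightlike case (Theorem \ref{teo:bif-cusp}), which there governed the creation and genericity of the self-intersection. On this resolved locus I would then express the extra equation of each multilocal stratum: for $Tc$ the tangency condition reduces to $\overline{c_1}(s)+3\overline{c_3}(s)t^2+\cdots=0$, which together with $R=0$ forces $t=0$, so for $s\neq0$ the node is never a tacnode; for $LT$, $IT$ and $VT$ one substitutes $R=0$ into the reduced lightlike, inflection and vertex equations and checks that the reduced Monge--Taylor map crosses each substratum transversally, the vertex and lightlike cases also invoking the second inequality (and the inclusion $LI(k)\subset V(2k)$ of Theorem \ref{teo:IL-V} to align the $V$ and $LI$ branches).

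The main obstacle is precisely this resolution: because the bi-jet map collapses along the diagonal, one cannot verify multilocal transversality at the origin, and must instead divide by the diagonal, pass to the node locus, and keep careful track of which inequality controls each reduced condition; once the reduced maps are in hand the remaining verifications are routine rank computations. The converse is then immediate: transversality to $LC$ forces the second inequality, while the transverse creation of the node---equivalently transversality to the multilocal strata---forces $\frac{\partial \overline{c_1}}{\partial s_1}(0,0)\neq0$, so transversality to all nine strata holds if and only if both inequalities do.
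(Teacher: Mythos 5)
Note first that the paper itself gives no proof of Theorem~\ref{prop:trans:cusp:light} beyond ``standard lengthy calculations'' deferred to \cite{Tese}, so your attempt cannot be matched line-by-line against a printed argument; judged on its own merits, your central computations are sound where you actually carry them out --- the $3\times 3$ Jacobian for the stratum $LC$ and its determinant $-2\bigl(\frac{\partial \overline{c_1}}{\partial s_1}\frac{\partial \overline{c_2}}{\partial s_2}-\frac{\partial \overline{c_1}}{\partial s_2}\frac{\partial \overline{c_2}}{\partial s_1}\bigr)$ are correct, as are the reduction of the double-point locus to $R(t,s)=\overline{c_1}(s)+\overline{c_3}(s)t^2+O_s(t^4)=0$ and the elimination of $Tc$. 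But there is a genuine gap in your treatment of the local strata: the propagation ``$A\subset B$ gives $T_pA\subseteq T_pB$, so transversality to $LC$ yields transversality to $I(2)$, $LI$, $V(2)$'' fails because these are not smooth submanifolds through the cusp jet $p$. Every term of both defining equations of $V(2)$ contains at least two factors vanishing at $p$ (where $a_1=b_1=0$), so both differentials vanish identically there and $\overline{V(2)}$ is singular along $C$; no tangent-space inclusion is available. Worse, the two $I(2)$ equations have independent differentials $da_1-db_1$ and $c_3\,da_1$ at $p$, so their zero set coincides with $C$ locally and the stratum $I(2)$ proper is \emph{empty} near $p$ (consistent with Theorem~\ref{teo:est-cusp-light}) rather than a submanifold containing $C$. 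Transversality to $V(2)$ must therefore be verified at points of the stratum \emph{near} but not at $p$, either by establishing Whitney $(a)$-regularity of the stratification along $LC$ (which you neither state nor check) or by explicit parametrization of the preimage curves, as is done in the proof of Theorem~\ref{teo:est-cusp-light} --- where the two $V(2)$ branches and the $LI$ branch come out mutually tangent via Theorem~\ref{teo:IL-V}, a degeneracy your tangent-space argument would silently miss.

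The second gap concerns exactly the part you flag as the difficulty: the $LT$ and $VT$ verifications are only asserted (``routine rank computations''), and your ``only if'' for the first inequality does not follow as claimed. The normal form (\ref{eq:def:cusp:light}) is symmetric under relabeling $s_1\leftrightarrow s_2$, and transversality to the nine strata is invariant under this relabeling, whereas the condition $\frac{\partial\overline{c_1}}{\partial s_1}(0,0)\neq 0$ is not; so transversality to the multilocal strata can only force the invariant condition $\nabla\overline{c_1}(0,0)\neq(0,0)$ --- which is already implied by the determinant condition --- and solving $R=0$ for a parameter needs only this gradient. To recover the statement as printed you must make explicit the normalization under which $s_1$ is singled out as the $\mathcal{A}_e$-versality direction (cf.\ the remark following the theorem in the paper), and then actually perform the rank computations for $LT$ and $VT$ along the resolved node locus, recording which of the two inequalities each one uses. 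As it stands, your proposal is a correct skeleton in the expected ``lengthy calculation'' paradigm, with the $LC$ and node computations done, but the local-strata reduction rests on a false smoothness assumption and the multilocal half of the equivalence is unproven.
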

\begin{proof}
The proof follows by standard lengthy calculations and is omitted (see \cite{Tese} for details). 
\end{proof}

The condition $\frac{\partial c_1}{\partial s_1}(0,0)\neq 0$ is
exactly the condition to $\g_s$ be an $\mathcal{A}_e$-versal
deformation of $\g$.

\begin{corollary}\label{def:FRLS-generic}
Let $\g$ be a lightlike ordinary cusp and $\g_s$ be a $2$-parameter
deformation of $\g_0 = \g$ as in {\rm(\ref{eq:def:cusp:light})}. Then $\g_s$ is an \textit{FRLS}-generic deformation if, and only if, $\g_s$ is
an $\mathcal{A}_e$-versal deformation of $\g_0$ and $\frac{\partial
\overline{c_1}}{\partial s_1}(0,0)\frac{\partial
\overline{c_2}}{\partial s_2}(0,0) - \frac{\partial
\overline{c_1}}{\partial s_2}(0,0)\frac{\partial
\overline{c_2}}{\partial s_1}(0,0)\neq 0$.
\end{corollary}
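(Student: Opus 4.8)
The plan is to obtain the corollary as a formal consequence of Theorem \ref{prop:trans:cusp:light} together with Definition \ref{def:FRLS-def.}, the only substantive step being to identify one of the transversality conditions with $\mathcal{A}_e$-versality. First I would unwind the definition: by Definition \ref{def:FRLS-def.}, $\g_s$ is \textit{FRLS}-generic exactly when the family of Monge-Taylor maps it induces is transverse to the \textit{FRLS}-stratification. Because a lightlike ordinary cusp develops self-intersections as soon as it is deformed, the correct ambient space is the bi-jet space ${}_2J^k(1,2)$, and the relevant strata are precisely the local strata $LC, C, I(2), LI, V(2)$ and the multi-local strata $IT, LT, VT, Tc$ written out at the start of this section. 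Thus the first step records that \textit{FRLS}-genericity of $\g_s$ means transversality of ${}_2j^k\Phi$ to exactly these nine strata at the central jet ${}_2j^k\phi_{\g_0}(0,0)$, which lies in the deepest stratum $LC$.

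Second, I would invoke Theorem \ref{prop:trans:cusp:light}: transversality of ${}_2j^k\Phi$ to $LC, C, I(2), LI, V(2), IT, LT, VT$ and $Tc$ holds if and only if $\frac{\partial \overline{c_1}}{\partial s_1}(0,0)\neq 0$ and $\frac{\partial \overline{c_1}}{\partial s_1}(0,0)\frac{\partial \overline{c_2}}{\partial s_2}(0,0) - \frac{\partial \overline{c_1}}{\partial s_2}(0,0)\frac{\partial \overline{c_2}}{\partial s_1}(0,0)\neq 0$. Combining with the first step, $\g_s$ is \textit{FRLS}-generic if and only if both conditions hold. To finish, I would apply the observation recorded immediately after Theorem \ref{prop:trans:cusp:light}, namely that for a family of the form (\ref{eq:def:cusp:light}) the single condition $\frac{\partial \overline{c_1}}{\partial s_1}(0,0)\neq 0$ is exactly the $\mathcal{A}_e$-versality condition for the ordinary-cusp singularity of $\g_0$; this is the classical infinitesimal versality criterion, read off from the fact that $(t^2,t^3)$ has $\mathcal{A}_e$-codimension $1$ with miniversal direction controlled by the coefficient $\overline{c_1}$. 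Substituting this reformulation into the pair of conditions yields precisely the two conditions of the corollary: $\mathcal{A}_e$-versality of $\g_s$ together with the non-vanishing of the Jacobian determinant.

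The one point I would take care to justify rather than assert is that transversality to the finite list of nine strata really is equivalent to transversality to the whole \textit{FRLS}-stratification, including all intersections demanded by Definition \ref{def:FRLS-def.}. Here I would argue by the same incidence reasoning used in the lightlike-inflection case (compare the remark following Theorem \ref{teo:gener.trans} and the containment $LI(k)\subset V(2k)$ of Theorem \ref{teo:IL-V}): every listed stratum contains the deepest stratum $LC$ in its closure, each geometric stratum is already forced into the list through such containments, and any further intersection is of higher codimension than the dimension of the source of ${}_2j^k\Phi$, so that transversality to it reduces to the image avoiding it, a condition already entailed by the two displayed inequalities. Once this completeness is checked, the corollary is immediate; the genuinely heavy computation, that is, the verification of the transversality criterion itself, is the content of Theorem \ref{prop:trans:cusp:light}, and it is the step I would expect to be the main obstacle, although it may be taken as given here.
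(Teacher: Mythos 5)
Your proposal is correct and follows essentially the paper's own (implicit) route: the corollary is derived directly from Theorem \ref{prop:trans:cusp:light} combined with the observation stated immediately before the corollary that $\frac{\partial\overline{c_1}}{\partial s_1}(0,0)\neq 0$ is exactly the $\mathcal{A}_e$-versality condition for the lightlike ordinary cusp, which is precisely your argument. Your final paragraph, verifying that transversality to the nine listed strata already accounts for the full \textit{FRLS}-stratification of Definition \ref{def:FRLS-def.} (via closure containments such as $LI(k)\subset V(2k)$ and codimension counting against the source dimension of ${}_2j^k\Phi$), makes explicit a point the paper leaves tacit; it is a sensible addition rather than a deviation.
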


By Corollary \ref{def:FRLS-generic}, if $\g_s$ is a $2$-parameter deformation
as in (\ref{eq:def:cusp:light}) and is \textit{FRLS}-generic, we can
take $\g_s(t) = (t^2,\b_s(t))$, with
\begin{equation}\label{eq:cusp-light}
\b_s(t) = s_1t + (s_2 + 1)t^2 + \overline{c_3}(s)t^3 +
\overline{c_4}(s)t^4 +  O_s(t^5).
\end{equation}

\begin{theorem}\label{teo:est-cusp-light}
Let $\g_s$ be an \textit{FRLS}-generic $2$-parameter deformation of a
lightlike ordinary cusp $\g$ as in {\rm(\ref{eq:cusp-light})}. Then,
the stratification of the parameter space $s = (s_1,s_2)\in\R^2,0$
of $\g_s$ (see Figure \ref{fig:def-cusp-light}, center) consists
of the origin ($LC$) and the following curves:
\begin{itemize}
\item[$C$:] $s_1 = 0$
\item[$LI$:] $(s_1,s_2) = (3c_3t^2+O(t^3),-3c_3t+O(t^2))$
\item[$V(2)$:]  $\left\{\begin{array}{l}
     (s_1,s_2) = \displaystyle (-\frac{2\sqrt{5} - 5}{25c_3}t^2 + O(t^3), t + O(t^2)) \\
     (s_1,s_2) = \displaystyle (\frac{2\sqrt{5} + 5}{25c_3}t^2 + O(t^3), t + O(t^2))
   \end{array}\right.$
\item[$LT$:]   $(s_1,s_2) = (-c_3t^2+c_3c_{301}t^3+O(t^4),-c_3t+(c_3c_{301}-2c_4)t^2+O(t^3))$
\item[$VT$:]   $(s_1,s_2) = (-c_3t^2+c_3c_{301}t^3+O(t^4),-c_3t+(c_3c_{301}+2c_3-2c_4)t^2+O(t^3))$
\end{itemize}
where $c_{301}$ is the coefficient of $s_2$ in $\overline{c_3}(s)$. The strata $I(2), IT$ and $Tc$ are empty.
\end{theorem}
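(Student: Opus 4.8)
The plan is to read off the stratification by pulling each stratum of Section~8 back under the family of Monge--Taylor (bi-)jet maps: one substitutes the jets of $\g_s$ into the defining equations and solves for $(s_1,s_2)$ as a function of the source parameter. For the normal form (\ref{eq:cusp-light}) the first factor is rigid, $a_0=t^2$, $a_1=2t$, $a_2=1$, and $a_j=0$ for $j\>3$, while the second factor is obtained by differentiating $\b_s$, giving $b_1=\b_s'(t)=s_1+2(s_2+1)t+3\overline{c_3}(s)t^2+\cdots$, $b_2=\frac{1}{2}\b_s''(t)=(s_2+1)+3\overline{c_3}(s)t+\cdots$, $b_3=\overline{c_3}(s)+4\overline{c_4}(s)t+\cdots$, and so on. Because $a_2=1$ and $a_j=0$ for $j\>3$, every stratum equation collapses to a tractable relation in $(t,s_1,s_2)$, and by Corollary~\ref{def:FRLS-generic} the genericity hypothesis keeps the relevant Jacobians nonzero, so the implicit function theorem yields genuine smooth branches.

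I would treat the local strata in increasing order of difficulty. The cusp condition $a_1=b_1=0$ forces $t=0$ and then $b_1=s_1=0$, giving $C\colon s_1=0$; adjoining $a_2-b_2=0$ yields $s_2=0$, so $LC$ is the origin. The inflection equation $a_1b_2-a_2b_1=t\b_s''(t)-\b_s'(t)$ becomes $-s_1+3\overline{c_3}(s)t^2+8\overline{c_4}(s)t^3+\cdots=0$; imposing in addition the relevant lightlike component $a_1-b_1=0$ (relevant because the limiting tangent at the cusp is along $(1,1)$) gives $s_2=-3c_3t+O(t^2)$ and $s_1=3c_3t^2+O(t^3)$, which is the stated $LI$ branch. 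The stratum $V(2)$ is obtained by feeding the jets into its two defining equations; the resulting system has two solution branches, governed by a quadratic whose roots sum to $\frac{2}{5c_3}$ and multiply to $\frac{1}{125c_3^2}$, namely $\frac{5\mp 2\sqrt5}{25c_3}$, and these are exactly the leading coefficients of $s_1$ in the two stated branches (both with $s_2=t+O(t^2)$).

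For the multilocal strata I would exploit that any self-intersection of $\g_s(t)=(t^2,\b_s(t))$ satisfies $t_1=-t_2=t$ (from $t_1^2=t_2^2$) together with $\b_s(t)=\b_s(-t)$, i.e.\ the vanishing of the odd part, $s_1+\overline{c_3}(s)t^2+\cdots=0$. Intersecting this locus with the lightlike condition on one branch gives $s_2=-c_3t+\cdots$, $s_1=-c_3t^2+\cdots$ ($LT$); intersecting it with the vertex equation on one branch gives $VT$, which by Theorem~\ref{teo:IL-V} shares the same leading terms and deviates only at the next order (the extra $+2c_3$ in the $s_2$-coefficient). Emptiness of the remaining strata follows by confronting incompatible leading coefficients: $I(2)$ forces $a_1b_3-a_3b_1=2tb_3=0$ with $b_3(0)=c_3\neq0$, hence $t=0$; $IT$ needs the inflection relation $s_1=3c_3t^2+\cdots$ together with the self-intersection relation $s_1=-c_3t^2+\cdots$; and $Tc$ needs the tangency relation $s_1=-3c_3t^2+\cdots$ together with $s_1=-c_3t^2+\cdots$. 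In each case $c_3\neq0$ excludes a nontrivial solution.

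The main obstacle will be the $V(2)$ and $VT$ computations and the higher-order coefficients of $LT$ and $VT$: the vertex equations are polynomials of substantial degree in the jets, and recovering the stated $t^3$ (resp.\ $t^2$) corrections requires substituting the expansions $\overline{c_3}(s)=c_3+c_{301}s_2+\cdots$ and $\overline{c_4}(s)=c_4+\cdots$ consistently, which is precisely where $c_{301}$ and $c_4$ enter the parametrizations. This bookkeeping is lengthy but mechanical once the leading orders above are fixed, and the genericity of $\g_s$ guarantees throughout that each branch is a genuine smooth curve and that the two $V(2)$ sheets, together with $LT$ and $VT$, are pairwise distinct, completing the stratification.
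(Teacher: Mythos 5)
Your overall strategy is exactly the paper's: the stratification of the parameter space is obtained as the projection of the pre-image, under the Monge--Taylor (bi-)jet map, of the jet-space strata listed in Section 8, and your treatments of $C$, $LC$, $LI$, $LT$ and of the empty strata $I(2)$, $IT$, $Tc$ reproduce the paper's computations (the paper writes out only $LI$, $LT$ and $V(2)$ and declares the rest similar). The step that fails as written is your claim that substituting the jets into the two defining equations of $V(2)$ yields a system with \emph{two} solution branches governed by a quadratic. It has three. Since $LI\subset V(2)$ (Theorem \ref{teo:IL-V}), the $LI$ curve satisfies both $V(2)$ equations identically --- at a point with $b_1=a_1$ and $b_2=a_2$ every term of both equations has a vanishing factor --- so the pulled-back system contains, besides the two genuine $V(2)$ branches, the curve $(s_1,s_2)=(\frac{1}{3c_3}t^2+O(t^3),\,t+O(t^2))$, which is exactly the $LI$ stratum reparametrized. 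The paper handles this by writing the numerator $G(t,s)$ of $\kp_s^\prime$ as $\Rl$-induced from $F=u_0+u_1t+u_2t^2+t^4$, taking the resultant $R$ of $G$ and $\frac{\partial G}{\partial t}$, finding \emph{three} regular, pairwise tangent curves in the zero set of $R$, and then invoking Theorem \ref{teo:IL-V} to identify the third as $LI$ and discard it from $V(2)$. Your quadratic, with root sum $\frac{2}{5c_3}$ and product $\frac{1}{125c_3^2}$, is precisely what survives after factoring the root $\frac{1}{3c_3}$ out of the leading-order cubic; that factorization, and the reason the discarded branch belongs to $LI$ rather than to $V(2)$, must appear explicitly, since otherwise the computation as you describe it would report a three-component $V(2)$ and the stated theorem would come out wrong.

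A second, smaller defect: you justify the agreement of the leading terms of $LT$ and $VT$ by Theorem \ref{teo:IL-V}, but that theorem concerns the local inclusion $LI(k)\subset V(2k)$ and says nothing about multilocal strata. The coincidence has a more pedestrian source: both $LT$ and $VT$ lie over the self-intersection locus $t_1=-t_2=t$, $s_1+\overline{c_3}(s)t^2+O_s(t^4)=0$, which already pins down $s_1=-c_3t^2+\cdots$, and substituting this into either the lightlike condition $2t-\b_s^\prime(t)=0$ or the vertex condition at the parameter $t$ forces $s_2=-c_3t+\cdots$ at first order; the two conditions diverge only at the $t^2$-coefficient of $s_2$, producing the $+2c_3$ discrepancy. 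With the appeal to Theorem \ref{teo:IL-V} replaced by this expansion (part of the ``mechanical bookkeeping'' you defer, where $c_{301}$ and $c_4$ indeed enter as you say), your scheme coincides with the paper's proof.
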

\begin{proof}
The stratification of the parameter space of $\g_s$ is the projection to that space of the pre-image by the Monge-Taylor map of the
stratification of the jet space by phenomena of codimension
$\< 2$. We present here the calculations for some stratum, the others follows in a similar way.

Consider the stratum $LI$ given by $a_1\pm b_1 = 0$ and
$a_1b_2 - a_2b_1 = 0$, that is, $b_1 = \mp a_1$ and $a_1(b_2 \pm
a_2) = 0$. If $a_1 = 0$ then $b_1 = 0$ and, consequently, we are on the
cusp stratum. Hence, $b_2 \pm a_2 = 0$ so $LI = \{a_1+b_1 = 0, a_2+b_2 = 0\}\cup\{a_1-b_1 = 0,
a_2-b_2 = 0\}$. Note that $_2j^k\Phi$ does not intersect the first component of $LI$. Taking the
pre-image of the second component by $_2j^k\Phi$ we get $\{2t -
\b_s^\prime(t) = 0, 1 - \frac{1}{2}\b_s^{\prime\prime}(t) = 0\}$ and this gives $LI$ parametrized by $(s_1,s_2) = (3c_3t^2 + O(t^3),-3c_3t + O(t^2))$.

For the stratum $LT$ we take its pre-image by $_2 j^k\Phi$ and we
get $t_1^2-t_2^2 = 0$, $\b_s(t_1)-\b_s(t_2) = 0$ and
$2t_1 \pm \b_s^\prime(t_1) = 0$, that is,  $t_1 = -t_2 = t$,
$\b_s(t)-\b_s(-t) = 0$ and $2t \pm \b_s^\prime(t) = 0$. 
A calculation shows now that $s_1 = -c_3t^2 + c_3c_{301}t^3+O(t^4)$ and $s_2 = -c_3t+(c_3c_{301}-2c_4)t^2+O(t^3)$.

The stratum $V(2)$ is given by the zeros of multiplicity $2$ of the
family $G(t,s)$ obtained by taking the numerator of $\kp_s^\prime(t)$. As
$G$ is a deformation of an $A_3$ singularity, $G$ is $\Rl$-induced
by $F(t, u_0,u_1,u_2) = u_0 + u_1t + u_2t^2 + t^4$, that is, we can
write $G(t,s_1,s_2) = a_0(s_1,s_2) + a_1(s_1,s_2)t + a_2(s_1,s_2)t^2 + t^4,$ for some smooth map-germs $a_0,a_1,a_2$. Thus, $V(2)$ is the
zero set of the resultant $R$ of $G$ and $\frac{\partial G}{\partial
t}$ with respect to $t$. We have
$R(s_1,s_2) =
16a_0a_2^4-4a_1^2a_2^3-128a_0^2a_2^2+144a_0a_1^2a_2-27a_1^4+256a_0^3$.

Analysing $R$, we find that $V(2)$ consists of germs of three regular and tangent curves
parametrized by
\begin{eqnarray*}
(s_1,s_2) &=& \displaystyle (-\frac{2\sqrt{5} - 5}{25c_3}t^2 + O(t^3), t + O(t^2)), \\
(s_1,s_2) &=& \displaystyle (\frac{2\sqrt{5} + 5}{25c_3}t^2 + O(t^3), t + O(t^2)),\\
(s_1,s_2) &=& \displaystyle (\frac{1}{3c_3}t^2 + O(t^3), t +
O(t^2)).
\end{eqnarray*}
By Theorem \ref{teo:IL-V}, $LI\subset V(2)$ and the last component
above represents $LI$. Therefore, the stratum $V(2)$ consists only of the first above two curves.
\end{proof}

\begin{theorem}\label{teo:LC} Let $\g$ be a germ of a lightlike ordinary cusp at $t_0$.
\begin{itemize}
\item[\text{(i)}] The caustic of $\g$ is the union of the tangent line
of $\g$ at $t_0$ and a curve with a lightlike ordinary cusp
singularity at $\g(t_0)$ as shown in Figure
\ref{fig:def-cusp-light-ev} $\textcircled{1}$.

\item[\text{(ii)}] The bifurcations of an \textit{FRLS}-generic $2$-parameter
family $\g_s$ with $\g_0=\g$ are as shown in Figure
\ref{fig:def-cusp-light}.

\item[\text{ (iii)}] For an \textit{FRLS}-generic $2$-parameter family $\g_s$
with $\g_0=\g$, the bifurcations in the caustic are as shown in
Figure \ref{fig:def-cusp-light-ev}.

\item[\text{ (iv)}] Any \textit{FRLS}-generic $2$-parameter family of $\g$ is \textit{FRLS}-equivalent to the model family $\a_u(t) = (t^2, u_1t + (1+u_2)t^2 + t^3)$.
\end{itemize}
\end{theorem}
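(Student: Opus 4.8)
The plan is to follow the template already used for the cusp case in Theorem~\ref{teo:bif-cusp} and for the lightlike inflections in Theorems~\ref{teo:IL} and~\ref{teo:IL(2)}, reducing parts (ii) and (iii) to the stratification of the parameter space furnished by Theorem~\ref{teo:est-cusp-light}. For part (i) I would compute the caustic directly from the family of distance squared functions. Writing $\g(t)=(t^2,\b(t))$ with $\b(t)=t^2+c_3t^3+O(t^4)$ and $c_3\neq0$, one has $d_u(t)=-(t^2-u_1)^2+(\b(t)-u_2)^2$; isolating $u_1$ from $d_u''=0$ and substituting into $d_u'=0$ gives the single relation $4t^3-t(\b'(t))^2+(\b(t)-u_2)(\b'(t)-t\b''(t))=0$. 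Since $\b'(0)=0$, this relation holds for every $u_2$ at $t=0$, where $\b''(0)=2$ forces $u_1=u_2$; thus the line $u_1=u_2$, which is exactly the (lightlike) tangent line of $\g$ at $t_0$, is one component of the caustic. For $t\neq0$ one solves successively for $u_2$ and $u_1$, obtaining a residual branch $(u_1,u_2)=(5t^2+O(t^3),5t^2+O(t^3))$ whose expansion exhibits a lightlike ordinary cusp at $\g(t_0)$, as in Figure~\ref{fig:def-cusp-light-ev}~$\textcircled{1}$. The essential point, as in Theorems~\ref{teo:IL}(i) and~\ref{teo:IL(2)}(i), is that a lightlike vector $v$ satisfies $v^\perp=\pm v$, so the tangent and limiting normal directions coincide; alternatively, (i) follows from Theorem~3.6 in \cite{reeve}.

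For part (ii) I would take $\g_s$ \textit{FRLS}-generic in the normal form (\ref{eq:cusp-light}) and read the bifurcations of the curve off the stratification of Theorem~\ref{teo:est-cusp-light}, which partitions $(s_1,s_2)$ into the origin $LC$ and the curves $C,LI,V(2),LT,VT$ (with $I(2),IT,Tc$ empty). In each connected region of the complement I would count the inflections, vertices, lightlike points and self-intersections as the zeros of, respectively, $\kp_s$, the numerator $G(t,s)$ of $\kp_s'$, the functions $\b_s'(t)\mp 2t$, and $\b_s(t)-\b_s(-t)$ with $t\neq0$, exactly as in the proof of Theorem~\ref{teo:bif-cusp}(ii). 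The inward/outward type of each vertex is then settled by the sign of $\kp_s\kp_s''$ at the critical points of $\kp_s$, using the graph of $\kp_s$ together with the notions of inward and outward vertex; and the relative position of lightlike points and self-intersections is obtained by comparing the defining equations $\b_s'(t)=\pm2t$ and $\b_s(t)=\b_s(-t)$, as for the non-lightlike cusp. Assembling this data over all regions yields Figure~\ref{fig:def-cusp-light}.

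For part (iii) I would assemble the caustic from its local and multilocal models along each stratum: away from the cusp the caustic is the evolute, so Proposition~\ref{prop:vert-ord} governs ordinary vertices, Proposition~\ref{prop:vert-ord2} forces a swallowtail transition of the caustic across the $V(2)$ stratum, the behaviour at lightlike points is controlled by the result of \cite{Minkowski} recalled in Section~3, and the multilocal strata $LT$ and $VT$ contribute the transverse features at self-intersections; combining these with the local model of part (i) produces Figure~\ref{fig:def-cusp-light-ev}. For part (iv) I would verify that the model $\a_u(t)=(t^2,u_1t+(1+u_2)t^2+t^3)$ is \textit{FRLS}-generic by the criterion of Corollary~\ref{def:FRLS-generic}: here $\overline{c_1}(u)=u_1$ and $\overline{c_2}(u)=u_2$, so $\partial\overline{c_1}/\partial u_1(0,0)=1\neq0$ and the Jacobian $\partial\overline{c_1}/\partial u_1\cdot\partial\overline{c_2}/\partial u_2-\partial\overline{c_1}/\partial u_2\cdot\partial\overline{c_2}/\partial u_1=1\neq0$. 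Since the analysis in (ii)--(iii) used only that $\g$ has a lightlike ordinary cusp and that the family is \textit{FRLS}-generic, $\a_u$ serves as an \textit{FRLS}-model and every \textit{FRLS}-generic $2$-parameter family is \textit{FRLS}-equivalent to it.

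The step I expect to be the main obstacle is the simultaneous bookkeeping in parts (ii)--(iii). Because the strata $V(2)$, $LT$ and $VT$ are mutually tangent at the origin (Theorem~\ref{teo:est-cusp-light}), the regions they bound are thin and their boundaries must be correctly ordered in order to pin down the relative positions of the (up to three) vertices, the lightlike points, the self-intersections and the inflections, and in particular to decide which vertices are inward. Equally delicate on the caustic side is the transition across $V(2)$: one must confirm that the deformed curve indeed carries a vertex of order $2$ there, so that Proposition~\ref{prop:vert-ord2} applies and the caustic undergoes a genuine swallowtail transition realized by sections of the swallowtail bifurcation set, which near the tangent strata are non-transverse and require the same care as the sections analyzed in Theorem~\ref{teo:IL(2)}(ii).
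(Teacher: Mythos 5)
Your proposal is correct and follows essentially the same route as the paper: part (i) by the identical direct computation with $d_u(t)=-(t^2-u_1)^2+(\b(t)-u_2)^2$ yielding the lightlike line $u_1=u_2$ plus a branch $(5t^2+O(t^3),5t^2+O(t^3))$ with a lightlike ordinary cusp; parts (ii)--(iii) by counting lightlike points, inflections, vertices and self-intersections from the same defining equations and propagating the configuration through the stratification of Theorem \ref{teo:est-cusp-light} using the results of sections 5--7; and part (iv) by the same model-family argument via Corollary \ref{def:FRLS-generic}. The only deviations are cosmetic: the paper anchors the configuration on the cusp stratum $s_1=0$ and then goes around the origin rather than counting region by region, it writes $G$ for the numerator of $\kp_s$ and $H$ for that of $\kp_s^\prime$ (you swap this), and the bound on vertices is $4$ (zeros of the $A_3$-deformation $H$), not the ``up to three'' mentioned in your closing remark.
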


\begin{figure}[h!]
\centering
\includegraphics[scale = 0.65]{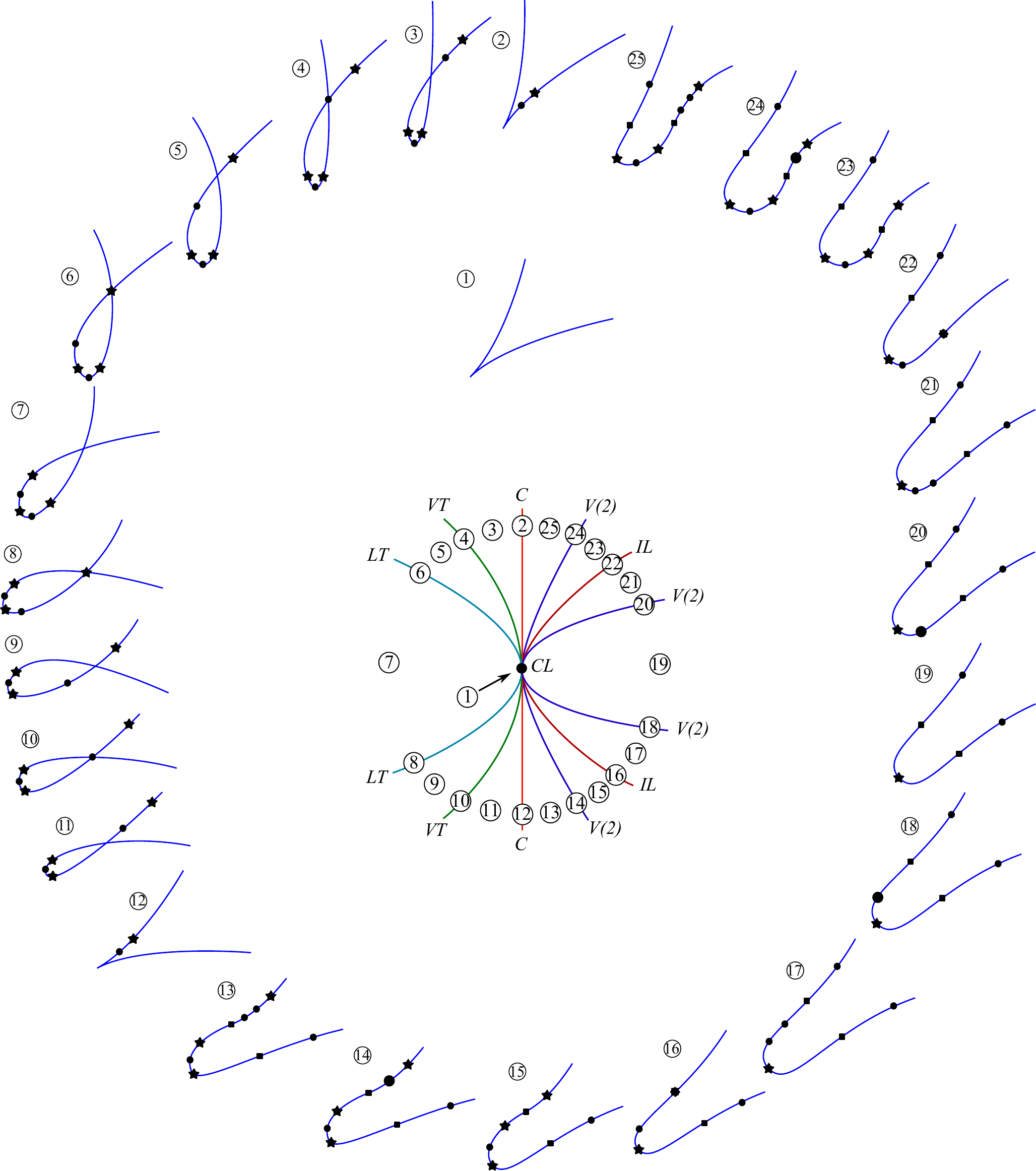}
\caption{\small{FRLS-generic deformation of a curve with a lightlike ordinary
cusp.}}\label{fig:def-cusp-light}
\end{figure}

\begin{figure}[h!]
\centering
\includegraphics[scale = 0.65]{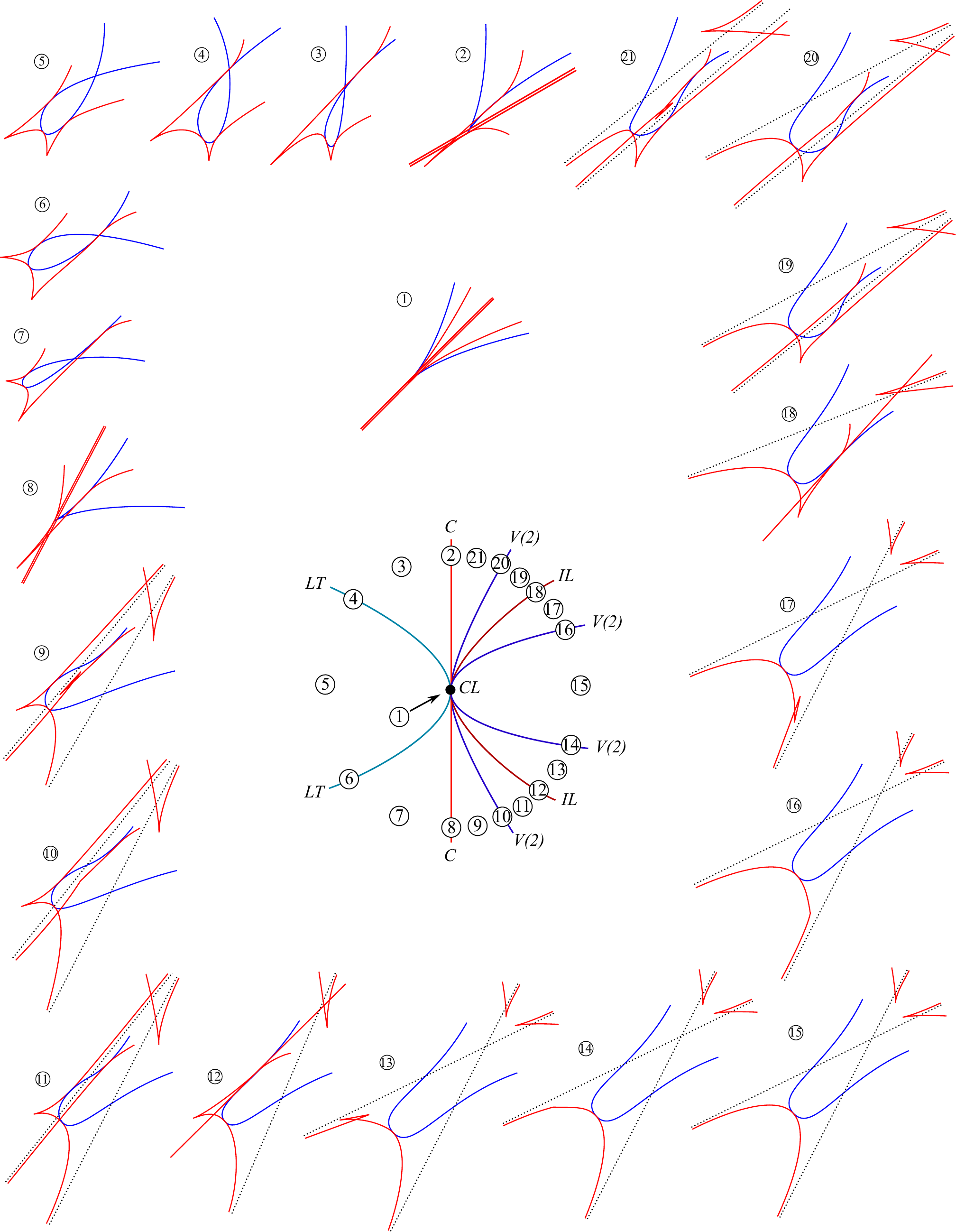}
\caption{\small{\textit{FRLS}-generic deformations of the caustic of a curve with a
lightlike ordinary cusp.}}\label{fig:def-cusp-light-ev}
\end{figure}

\begin{proof}

(i) We take $\g(t) = (t^2,\b(t)) = (t^2,t^2+c_3t^3+c_4t^4+O(t^5))$ at $t_0 = 0$.
Thus, $d_u(t) = -(t^2-u_1)^2+(\b(t)-u_2)^2$. Equations $d_u^{\prime\prime} = 0$ and $d_u^{\prime}
= 0$ give
$$t^2(4t+\frac{(\b^{\prime\prime}(t)t-\b^\prime(t))}{t^2}u_2-\frac{(\b^{\prime\prime}(t)t-\b^\prime(t))}{t^2}\b(t)-\frac{\b^\prime(t)^2t}{t^2})
= 0.$$ As $t = 0$ is a root of the above equation, the line $u_1 =
u_2$ is part of the caustic of $\g$.

For $t\neq 0$, we get $\displaystyle u_2 =
\frac{-4t^3+(\b^{\prime\prime}(t)t-\b^\prime(t))\b(t)+\b^\prime(t)^2t}{\b^{\prime\prime}(t)t-\b^\prime(t)}$.
Therefore, the caustic of $\g$ is the union of the lightlike line
$u_1 = u_2$ and the curve parametrized by $\displaystyle(5t^2 +
(9c_3-\frac{16c_4}{3c_3})t^3+O(t^4),5t^2+(4c_3-\frac{16c_4}{3c_3})t^3+O(t^4))$.
Comparing this lightlike ordinary cusp with $\g$ we conclude that
the relative position of the two curves is as shown in Figure
\ref{fig:def-cusp-light-ev} $\textcircled{1}$.

(ii) The lightlike points are given by $h^\pm(t,s) = \b_s^\prime(t)\pm 2t =
0$. The
family $h^+$ is regular with $h^+(0,0,0) = 0$, then $h^+$ has one
zero for any $s$. The family $h^-$ is an $\mathcal{R}$-versal
deformation of an $A_1$ singularity and, consequently, it has at
most 2 zeros. Therefore, $\g_s$ has at most 3 lightlike points.

For inflections, we use the numerator of $\kp_s$ given by $G(t,s) =
-2s_1 + 6\overline{c_3}(s)t^2 + O_s(t^3)$ which is a deformation of
an $A_1$ singularity. Thus $\g_s$ has at most 2 inflections.

For vertices, we note that the numerator of $\kp_s^\prime$ is given
by $H(t,s) =
-8t^3\b_s^{\prime\prime\prime}(t)+2t\b_s^\prime(t)^2\b_s^{\prime\prime\prime}(t)
-24t\b_s^\prime(t)+24t^2\b_s^{\prime\prime}(t)+6\b_s^\prime(t)^2\b_s^{\prime\prime}(t)-6t\b_s^\prime(t)\b_s^{\prime\prime}(t)^2$
which is a deformation of an $A_3$ singularity. Then $\g_s$ has at
most 4 vertices.

To obtain the configuration of the lightlike points, inflections and
vertices in $\g_s$ we use the results obtained in the previous
sections and the fact that changes occur only when we cross a curve
in the stratification of the parameter space given in Theorem
\ref{teo:est-cusp-light}. 

Firstly, we find the configuration on the
cusp stratum, that is, when $s_1 = 0$.
In this case, $h^+(t,0,s_2) = t(4 + 2s_2 + 3\overline{c_3}(s_2)t +
O_{s_2}(t^2))$ and $h^-(t,0,s_2) = t(2s_2 + 3\overline{c_3}(s_2)t +
O_{s_2}(t^2))$. Therefore, using $h^-$, we
get $t = -\frac{2}{3c_3}s_2 + O(s_2^2)$ as the unique lightlike point
of $\g_{s}$ on the cusp stratum.

For inflections, we have $G(t,0,s_2) = t^2(6\overline{c_3}(s_2) +
O_{s_2}(t))$ which implies that the two inflections of $\g_s$ are
concentrated at the cusp point.

For vertices, we have $H(t,0,s_2) = t^3(-48c_3s_2-180c_3^2t +
O_2(t,s_2))$, that is, $\g_{s}$ has 3 vertices concentrated at the
cusp point and another given by $t = - \frac{4}{15c_3}s_2+O(s_2^2)$.

The points are positioned in the following order L - V - C when $s_2>0$ and C - V - L
when $s_2<0$. Moreover, the vertex is inward, since
$sgn(\kp_{s_2}(t)) = sgn(G(t,s_2)) = sgn(c_3)$, for all $t$
sufficiently close to $0$.

Going around the origin in the $(s_1,s_2)$-space and using the results of the sections $5, 6$ and $7$ we obtain the configuration of $\g_s$ in each stratum as
shown in Figure \ref{fig:def-cusp-light}.

(iii) The statement follows from (ii) and the results in sections $5,6$ and $7$.

(iv) From Theorem \ref{teo:est-cusp-light}, the stratifications
of the parameter spaces of $\g_s$ and $\a_u$ are homeomorphic.
Moreover, the calculations in (ii) and (iii) depend only on the
fact that the curve has a lightlike ordinary cusp and the family is
\textit{FRLS}-generic, conditions that $\a_u$ satisfies.
\end{proof}

\section{\bf Geometric deformations of curves with a ramphoid cusp singularity}

Consider $\g$ a curve with a non-lightlike ramphoid cusp singularity
$\mathcal{A}_h$-equivalent to $(t^2,t^4+t^5)$ (see \cite{nunu} for
the $\mathcal{A}_h$-equivalence). The strata of interest are:
\begin{itemize}
\item[$RC$:] $a_1 = 0, \ b_1 = 0, \ a_2b_3 - a_3b_2 = 0$
\item[$C$:] $a_1 = 0, \  b_1 = 0$
\item[$I(2)$:] $a_1b_2 - a_2b_1 = 0, \ a_1b_3 - a_3b_1 = 0$
\item[$V(2)$:] $(a_1^2-b_1^2)(a_1b_3 - a_3b_1) + 2(b_1b_2 -
a_1a_2)(a_1b_2 - a_2b_1) = 0,\
2(a_1b_2-a_2b_1)[-(-a_1^2+b_1^2)(-a_2^2+b_2^2)+5(b_1b_2-a_1a_2)^2]
+(-a_1^2+b_1^2)[(4a_2b_3+4a_3b_2)a_1^2-9(a_2a_3+b_2b_3)a_1b_1
+(4a_2b_3+5a_3b_2)b_1^2] + 2(-a_1^2+b_1^2)^2(a_1b_4-a_4b_1) = 0$
\item[$LI$:] $a_1\pm b_1 = 0, \ a_1b_2 - a_2b_1 = 0$
\item[$IT$:] $a_0-a_0^\prime = 0, \ b_0-b_0^\prime = 0, \ a_1b_2 -
a_2b_1 = 0$
\item[$VT$:] $a_0-a_0^\prime = 0, \ b_0-b_0^\prime = 0, \
(a_1^2-b_1^2)(a_1b_3 - a_3b_1) + 2(b_1b_2 - a_1a_2)(a_1b_2 - a_2b_1)
= 0$
\item[$LT$:] $a_0-a_0^\prime = 0, \ b_0-b_0^\prime = 0, \ a_1\pm b_1 = 0$
\item[$Tc$:] $a_0-a_0^\prime = 0, \ b_0-b_0^\prime = 0, \ a_1b_1^\prime
-a_1^\prime b_1 = 0.$
\end{itemize}


Suppose the limiting tangent direction of $\g$ at the ramphoid cusp is timelike, the spacelike case is similar. We
take $\g(t) = (t^2,\b(t)) = (t^2,c_4t^4+c_5t^5+O(t^6))$, with
$c_4,c_5\neq 0$. Let $\g_s$ be a $2$-parameter deformation of $\g$
which can be taken in the form $\g_s(t) = (t^2,\b_s(t))$, where
\begin{equation}\label{eq:cusp-ramph}
 \b_s(t) = \overline{c_1}(s)t+\overline{c_2}(s)t^2+\overline{c_3}(s)t^3+\overline{c_4}(s)t^4+\overline{c_5}(s)t^5+O_s(t^6)
\end{equation}
with $\overline{c_1}(0) = \overline{c_2}(0) = \overline{c_3}(0) = 0$
and $\overline{c_k}(0) = c_k$, with $k\> 4$.

The strata $RC, C$ and $Tc$ are all the strata in $ _2J^k(1,2)$ of
codimension $\< 3$ that come from the $\mathcal{A}$-equivalence.
Hence, from the Mather's Theorem, to ensure the transversality of
$_2j^k\Phi$ with respect to such strata we need the
$\mathcal{A}_e$-versality of $\g_s$. We can show that $\g_s$ as in
(\ref{eq:cusp-ramph}) is an $\mathcal{A}_e$-versal deformation of
$\g$ if, and only if, $\frac{\partial \overline{c_1}}{\partial
s_1}(0)\frac{\partial \overline{c_3}}{\partial s_2}(0) -
\frac{\partial \overline{c_1}}{\partial s_2}(0)\frac{\partial
\overline{c_3}}{\partial s_1}(0)$. Therefore, supposing $\g_s$ an
$\mathcal{A}_e$-versal deformation of $\g$, we can take $\b_s(t) =
s_2t+\overline{c_2}(s)t^2+s_1t^3+\overline{c_4}(s)t^4+\overline{c_5}(s)t^5+O_s(t^6)$.

\begin{theorem}\label{teo:est-cusp-ramp}
Let $\g_s$ be an $\mathcal{A}_e$-versal $2$-parameter deformation of
a ramphoid cusp $\g$ $\mathcal{A}_h$-equivalent to $(t^2,t^4+t^5)$.
Then, the family of Monge-Taylor bi-jet maps $_2j^k\Phi$ is
transverse to the above strata and the stratification of the
parameters space of $\g_s$ (given in Figure \ref{fig:def-cusp-ramph}
center) consists of the origin ($RC$) and the following curves:
\begin{itemize}
\item[] $C$: \ $s_2 = 0$
\item[] $I(2)$: \ $(s_1,s_2) = (-4c_4t+O(t^2),-4c_4t^3+O(t^4))$
\item[] $V(2)$: \ $(s_1,s_2) = (10c_5t^2+40c_6t^3+O(t^4),5c_5t^4+24c_6t^5+O(t^6))$
\item[] $IT$: \ $(s_1,s_2) = (-2c_4t+O(t^2),2c_4t^3+O(t^4))$
\item[] $VT$: \ $(s_1,s_2) = (2c_5t^2+8c_6t^3+O(t^4),-3c_5t^4-8c_6t^5+O(t^6))$
\item[] $Tc$: \ $(s_1,s_2) = (-2c_5t^2+O(t^3),c_5t^4+O(t^5))$.
\end{itemize}
The strata $LI$ and $LT$ are empty.
\end{theorem}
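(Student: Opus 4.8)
The plan is to adapt the scheme of Theorems~\ref{prop:trans:cusp:light} and~\ref{teo:est-cusp-light}: prove the transversality of $_2j^k\Phi$ and the explicit stratification simultaneously, by computing the pre-image of each stratum under $_2j^k\Phi$ and projecting it to the $(s_1,s_2)$-plane. With $\g_s(t)=(t^2,\b_s(t))$ and $\b_s(t)=s_2t+\overline{c_2}(s)t^2+s_1t^3+\overline{c_4}(s)t^4+\overline{c_5}(s)t^5+O_s(t^6)$, the jet coordinates along the curve are $a_0=t^2$, $a_1=2t$, $a_2=1$, $a_i=0$ for $i\geq 3$, and $b_i=\frac{1}{i!}\b_s^{(i)}(t)$. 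For the $\mathcal{A}$-invariant strata $RC$, $C$ and $Tc$, transversality is equivalent, by Mather's theorem, to the $\mathcal{A}_e$-versality of $\g_s$, which is the standing hypothesis; transversality to the remaining geometric strata is then obtained a posteriori, since each pre-image will turn out to be a regular curve of the expected codimension.

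For the local strata I substitute these coordinates into the defining equations. The cusp stratum $a_1=b_1=0$ forces $t=0$ and $\b_s'(0)=s_2=0$, giving $C:\ s_2=0$. The stratum $I(2)$, i.e. $a_1b_2-a_2b_1=t\b_s''(t)-\b_s'(t)=0$ together with $a_1b_3-a_3b_1=\tfrac{1}{3}t\b_s'''(t)=0$, reduces for $t\neq 0$ to $\b_s'''(t)=0$, which gives $s_1=-4c_4t+O(t^2)$, and substituting into $t\b_s''-\b_s'=0$ gives $s_2=-4c_4t^3+O(t^4)$, as claimed. The stratum $V(2)$ is the delicate one. Let $H(t,s)=(a_1^2-b_1^2)(a_1b_3-a_3b_1)+2(b_1b_2-a_1a_2)(a_1b_2-a_2b_1)$ be the numerator of $\kp_s'$. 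A short computation shows that the $t^4$-terms of $H_0$ cancel and $H_0(t)=20\,c_5t^5+O(t^6)$, so $H_0$ has an $A_4$-type zero at the origin. Since a vertex of order $2$ is a double zero of $H_s$ in $t$, I would describe $V(2)$ as $\{H_s(t)=\partial_tH_s(t)=0\}$ and solve this system for $(s_1,s_2)$ in terms of the vertex location $t$, obtaining the stated curve. By Theorem~\ref{teo:IL-V} one must separate the genuine $V(2)$-branch from the branch corresponding to $LI\subset V(2)$; here this causes no difficulty, since $LI$ will be shown empty.

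For the multilocal strata I impose the self-intersection conditions $a_0=a_0'$, $b_0=b_0'$, which for $\a=t^2$ force $t_1=-t_2=t$ and the vanishing of the odd part $\b_s(t)-\b_s(-t)=2t\,(s_2+s_1t^2+\overline{c_5}(s)t^4+\cdots)$, i.e. $s_2=-s_1t^2-c_5t^4+\cdots$. Adjoining to this the branchwise inflection condition $t\b_s''(t)-\b_s'(t)=0$, the branchwise vertex condition $H_s(t)=0$, and the tangency condition $a_1b_1'-a_1'b_1=0$ yields, after eliminating $s_2$ and solving for a parameter in $t$, the curves $IT$, $VT$ and $Tc$ respectively; the leading coefficients are cross-checked against the self-intersection relation, for instance $Tc$ gives $s_1=-2c_5t^2+\cdots$ and hence $s_2=c_5t^4+\cdots$. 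Finally, the emptiness of $LI$ and $LT$ I establish directly: at a lightlike point $\b_s'(t)=\pm2t$, so the inflection condition becomes $t(\b_s''(t)\mp 2)=0$, forcing $\b_s''(t)=\pm2$ for $t\neq 0$, while the self-intersection relation forces the factor $2\overline{c_2}(s)\mp 2+O(t,s)$. Since $\b_s''(0,0)=2\overline{c_2}(0)=0$ and $2\overline{c_2}(0)\mp 2=\mp 2\neq 0$, neither can hold for $(t,s)$ near the origin with $t\neq 0$.

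The main obstacle is the $V(2)$ computation. Because $H_0$ has an $A_4$ (rather than an $A_3$) zero, the family $H$ is not $\Rl$-versal, and the double-root locus must be extracted either through the resultant of $H_s$ and $\partial_tH_s$ carried out to high order, or through the $\Rl(X)$-induction used for the lightlike cusp; tracking the dependence on $c_5$ and $c_6$ accurately enough to pin down the leading coefficients $10c_5,40c_6,5c_5,24c_6$, while correctly discarding the $LI$-branch guaranteed by $LI\subset V(2)$, is the substantial and error-prone step of the argument.
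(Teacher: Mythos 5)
Your proposal is correct and takes essentially the same route as the paper, which proves Theorem \ref{teo:est-cusp-ramp} exactly by the scheme of Theorems \ref{prop:trans:cusp:light} and \ref{teo:est-cusp-light} that you adapt: Mather's theorem plus $\mathcal{A}_e$-versality for the $\mathcal{A}$-invariant strata $RC$, $C$, $Tc$, and elimination in the pre-images of the geometric strata under ${}_2j^k\Phi$, with $V(2)$ extracted as the double-root locus of the numerator of $\kp_s^\prime$ (a deformation of an $A_4$ singularity, as you note). The computations you do exhibit --- $C$, $I(2)$, $IT$, $Tc$, the expansion $H_0(t)=20c_5t^5+O(t^6)$, and the emptiness of $LI$ and $LT$ --- all check out against the stated parametrizations, and solving $\{H_s=\partial_tH_s=0\}$ parametrically in the double-root location is a harmless variant of the resultant computation used in the lightlike-cusp case.
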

\begin{proof}
It follows similarly to the proof of Theorem \ref{prop:trans:cusp:light} and \ref{teo:est-cusp-light} (see \cite{Tese} for details).
\end{proof}

From Theorem \ref{teo:est-cusp-ramp}, the strata $V(2)$ and $VT$
have a ramphoid cusp singularity if $c_6\neq 0$. This is a new
condition that will be required for the concept of \textit{FRS}-genericity.

\begin{corollary}\label{def:ramphoid}
Consider the curve $\g(t) = (t^2,\b(t)) =
(t^2,c_4t^4+c_5t^5+c_6t^6+O(t^7))$ with $c_4\neq 0,c_5\neq 0$ and
$c_6\neq 0$. Then a $2$-parameter deformation $\g_s$ of $\g$
is \textit{FRS}-generic if, and only if, $\g_s$ is an $\mathcal{A}_e$-versal
deformation of the ramphoid cusp singularity of $\g$ at $t = 0$.
\end{corollary}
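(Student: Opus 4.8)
The plan is to obtain the equivalence by combining two facts already established in the paper: the Thom-Mather correspondence between transversality to the $\mathcal{A}$-invariant strata and $\mathcal{A}_e$-versality, and the explicit stratum analysis of Theorem \ref{teo:est-cusp-ramp}. By Definition \ref{def:FRLS-def.}, to say that $\g_s$ is \textit{FRS}-generic is to say that the family of Monge-Taylor bi-jet maps ${}_2j^k\Phi$ is transverse to the full stratification of ${}_2J^k(1,2)$ by the strata $RC, C, I(2), V(2), LI, IT, VT, LT, Tc$; since the lightlike strata $LI$ and $LT$ are empty in this configuration by Theorem \ref{teo:est-cusp-ramp}, only the remaining strata need be considered.

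First I would dispose of the implication \textit{FRS}-generic $\Rightarrow$ $\mathcal{A}_e$-versal. Transversality to the whole stratification forces, in particular, transversality to the three $\mathcal{A}$-invariant strata $RC$, $C$ and $Tc$. As noted just before Theorem \ref{teo:est-cusp-ramp}, these are exactly all the strata of codimension $\leq 3$ arising from the $\mathcal{A}$-equivalence, so Mather's theorem identifies this transversality with the $\mathcal{A}_e$-versality of $\g_s$. No computation is needed here beyond the codimension count.

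For the converse, $\mathcal{A}_e$-versal $\Rightarrow$ \textit{FRS}-generic, I would invoke Theorem \ref{teo:est-cusp-ramp} directly: under the versality hypothesis, ${}_2j^k\Phi$ is transverse not merely to $RC, C, Tc$ but to every stratum in the list, with the geometric strata appearing with the parametrizations recorded there. This is precisely the step where the assumption $c_6 \neq 0$ enters. By the remark following Theorem \ref{teo:est-cusp-ramp}, $c_6 \neq 0$ is the non-degeneracy guaranteeing that $V(2)$ and $VT$ carry their generic ramphoid cusp structure, so that the transversality computation of the theorem is valid as stated and the stratification of the parameter space is the expected one. Transversality to the entire stratification is exactly \textit{FRS}-genericity, which closes the equivalence.

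The hard part is this converse implication, since a priori transversality to the non-$\mathcal{A}$-invariant geometric strata $V(2)$ and $VT$ could impose conditions strictly stronger than $\mathcal{A}_e$-versality. What makes the two notions coincide is that, for a ramphoid cusp with $c_6 \neq 0$, these geometric strata impose no constraint beyond versality---this is exactly the content of Theorem \ref{teo:est-cusp-ramp}. The hypothesis $c_6 \neq 0$ is thus the genericity of the curve $\g$ itself that keeps its geometry as tame as the $\mathcal{A}$-analysis alone predicts.
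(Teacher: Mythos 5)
Your proposal is correct and takes essentially the paper's own (implicit) route: the paper deduces this corollary exactly as you do, combining the remark before Theorem \ref{teo:est-cusp-ramp} that $RC$, $C$ and $Tc$ are all the strata of codimension $\leq 3$ coming from $\mathcal{A}$-equivalence (so Mather's theorem converts transversality to them into $\mathcal{A}_e$-versality) with Theorem \ref{teo:est-cusp-ramp} itself, which supplies transversality to the remaining geometric strata from versality alone. The only slip is your placement of $c_6\neq 0$: Theorem \ref{teo:est-cusp-ramp} requires only $\mathcal{A}_h$-equivalence to $(t^2,t^4+t^5)$ (i.e.\ $c_4,c_5\neq 0$), so $c_6\neq 0$ does not validate the transversality computation; rather, as the remark following that theorem states, it guarantees that the parameter-space strata $V(2)$ and $VT$ retain their stable ramphoid-cusp structure, which is why the paper builds this condition into the class of curves for which \textit{FRS}-genericity is defined.
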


\begin{theorem} Let $\g$ be a germ of a non-lightlike ramphoid cusp $\mathcal{A}_h$-equivalent to $(t^2,t^4+t^5)$ at
$t_0$.
\begin{itemize}
\item[\text{(i)}] The caustic of $\g$ is the union of the tangent line of $\g$ at $t_0$
and a smooth curve with an ordinary inflection as shown in Figure
\ref{fig:def-cusp-ramph-evoluta} $\textcircled{1}$.

\item[\text{(ii)}] The bifurcations of an \textit{FRS}-generic $2$-parameter family
$\g_s$ with $\g_0=\g$ are as shown in Figure
\ref{fig:def-cusp-ramph}.

\item[\text{(iii)}] For an \textit{FRS}-generic $2$-parameter family $\g_s$ with
$\g_0=\g$, the bifurcations in the caustic are as shown in Figure
\ref{fig:def-cusp-ramph-evoluta}.

\item[\text{(iv)}] Any \textit{FRS}-generic $2$-parameter family of $\g$ is \textit{FRS}-equivalent to
the model family $\a_u(t) = (t^2, u_2t + u_1t^3 + t^4 + t^5 + t^6)$.
\end{itemize}
\end{theorem}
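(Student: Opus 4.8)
The plan is to follow the pattern of the proofs of Theorems~\ref{teo:bif-cusp} and \ref{teo:LC}: compute the caustic of the germ directly for part~(i), and then read off the bifurcations in each chamber of the parameter space from the stratification already obtained in Theorem~\ref{teo:est-cusp-ramp}. For part~(i) I take $\g(t)=(t^2,\b(t))$ with $\b(t)=c_4t^4+c_5t^5+c_6t^6+O(t^7)$ and form the distance squared function $d_u(t)=-(t^2-u_1)^2+(\b(t)-u_2)^2$. Because $\g$ is singular at $t=0$ one has $d_u'(0)=0$ for every $u$, so the caustic condition $d_u'=d_u''=0$ reduces at $t=0$ to $d_u''(0)=4u_1=0$, which yields the line component. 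For $t\neq 0$ the equations $d_u'=d_u''=0$ form a linear system in $(u_1,u_2)$ whose determinant is a nonzero multiple of $\b'(t)-t\b''(t)$; solving it gives a smooth parametrization $t\mapsto(u_1(t),u_2(t))$, and a leading-order expansion shows that $u_1$ vanishes to order $3$ in $t$ with coefficient a nonzero multiple of $c_5$, while $u_2$ tends to a constant with nonvanishing $t$-derivative. Hence the second component is a smooth curve on which $u_1$ is, to leading order, a nonzero multiple of the cube of the regular coordinate $u_2-u_2(0)$, i.e.\ an ordinary inflection (this is where $c_5\neq0$ is used).

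For part~(ii) I keep the \textit{FRS}-generic normal form $\g_s(t)=(t^2,\b_s(t))$ of Theorem~\ref{teo:est-cusp-ramp} and introduce the three families that detect the geometry: the numerator of $\kp_s$ (zeros $=$ inflections), the numerator of $\kp_s'$ (zeros $=$ vertices), and the odd part $h(t,s)=\frac{1}{2}(\b_s(t)-\b_s(-t))$ (nonzero zeros $=$ self-intersections); the two lightlike points are detected separately by $\b_s'(t)\mp2t$. For $\g_0$ the numerator of $\kp_0$ vanishes to order $3$ (an $A_2$ singularity) and, after the $t^4$-terms cancel, the numerator of $\kp_0'$ vanishes to order $5$ (an $A_4$ singularity, again using $c_5\neq0$); this bounds the numbers of inflections and vertices and fixes the maximal configuration. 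I would then compute the configuration on one wall---for instance the cusp stratum $s_2=0$, where the inflections and most of the vertices concentrate at the cusp---and propagate it around the origin, using Theorem~\ref{teo:est-cusp-ramp} to know exactly which wall ($C$, $I(2)$, $V(2)$, $IT$, $VT$ or $Tc$) is being crossed and hence which pair of special points is created or destroyed. The inward/outward type of each vertex is decided by the sign of $\kp_s\kp_s''$ through Proposition~\ref{prop:vert-ord}, and the relative order of the points along $\g_s$ is pinned down by comparing the leading terms of their defining equations, exactly as in Theorems~\ref{teo:bif-cusp} and \ref{teo:LC}. This gives Figure~\ref{fig:def-cusp-ramph}.

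Part~(iii) then follows by assembling the caustic from the local models established earlier: an ordinary vertex contributes a cusp of the evolute (Proposition~\ref{prop:vert-ord}), an ordinary inflection sends the evolute to infinity along the normal modeled on $xy=1$ (Proposition~\ref{prop:inf-ord}), a vertex of order $2$ produces the swallowtail transition (Proposition~\ref{prop:vert-ord2}), and the germ at the ramphoid cusp is the configuration found in~(i); gluing these according to the chamber data of~(ii) produces Figure~\ref{fig:def-cusp-ramph-evoluta}. For part~(iv) I would verify that the model $\a_u(t)=(t^2,u_2t+u_1t^3+t^4+t^5+t^6)$ is an $\mathcal{A}_e$-versal deformation of its ramphoid cusp, hence \textit{FRS}-generic by Corollary~\ref{def:ramphoid}; since every step of (i)--(iii) uses only that the curve has a non-lightlike ramphoid cusp and that the family is \textit{FRS}-generic, and since Theorem~\ref{teo:est-cusp-ramp} yields homeomorphic stratifications of the two parameter spaces, the homeomorphism required by Definition~\ref{def:FRLS-equivalent} can be built and the \textit{FRS}-equivalence follows.

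The main obstacle will be the vertex bookkeeping in part~(ii). The numerator of $\kp_s'$ is only a \emph{non-versal} two-parameter deformation of an $A_4$ singularity, and the vertex stratum $V(2)$ itself acquires a ramphoid cusp (this is exactly where the extra genericity condition $c_6\neq0$ enters), so extracting the precise number of vertices in each chamber requires realizing this family as $\Rl$-induced from the versal unfolding of $A_4$ and running the $\Rl(X)$-classification against the associated discriminant, the same strategy used in the proof of Theorem~\ref{teo:IL(2)}. Simultaneously controlling the relative positions of the inflections, vertices, self-intersections and the two lightlike points across every wall is the delicate point; by comparison the direct caustic computation in~(i) and the versality check in~(iv) are routine.
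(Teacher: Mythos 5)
Your proposal is correct and follows essentially the same route as the paper's proof: part (i) is the same elimination computation yielding the line $u_1=0$ together with the curve $(\frac{5c_5}{8c_4}t^3+O(t^4),\,-\frac{1}{2c_4}+\frac{15c_5}{16c_4^2}t+O(t^2))$, part (ii) uses the same detecting families ($h^{\pm}$, the numerators of $\kp_s$ and $\kp_s^\prime$ as deformations of $A_2$ and $A_4$ singularities) with the configuration computed on the cusp stratum $s_2=0$ and propagated around the origin across the walls of Theorem \ref{teo:est-cusp-ramp}, and (iii)--(iv) are assembled exactly as in the paper. The one point where you anticipate more machinery than is needed is the vertex bookkeeping: the paper does not run the $\Rl(X)$-classification against the $A_4$ discriminant here (that device is used only for Theorem \ref{teo:IL(2)}), because Theorem \ref{teo:est-cusp-ramp} already parametrizes $V(2)$ and $VT$ (which is where $c_6\neq 0$ enters), and the exact chamber-by-chamber counts then follow from the cusp-stratum computation $H(t,s_1,0)=12t^3(2s_1-10c_5t^2+\cdots)$, giving the order V-I-C-V for $s_1>0$ and C-I for $s_1<0$, combined with wall-crossing.
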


\begin{figure}[h!]
\centering
\includegraphics[scale = 0.75]{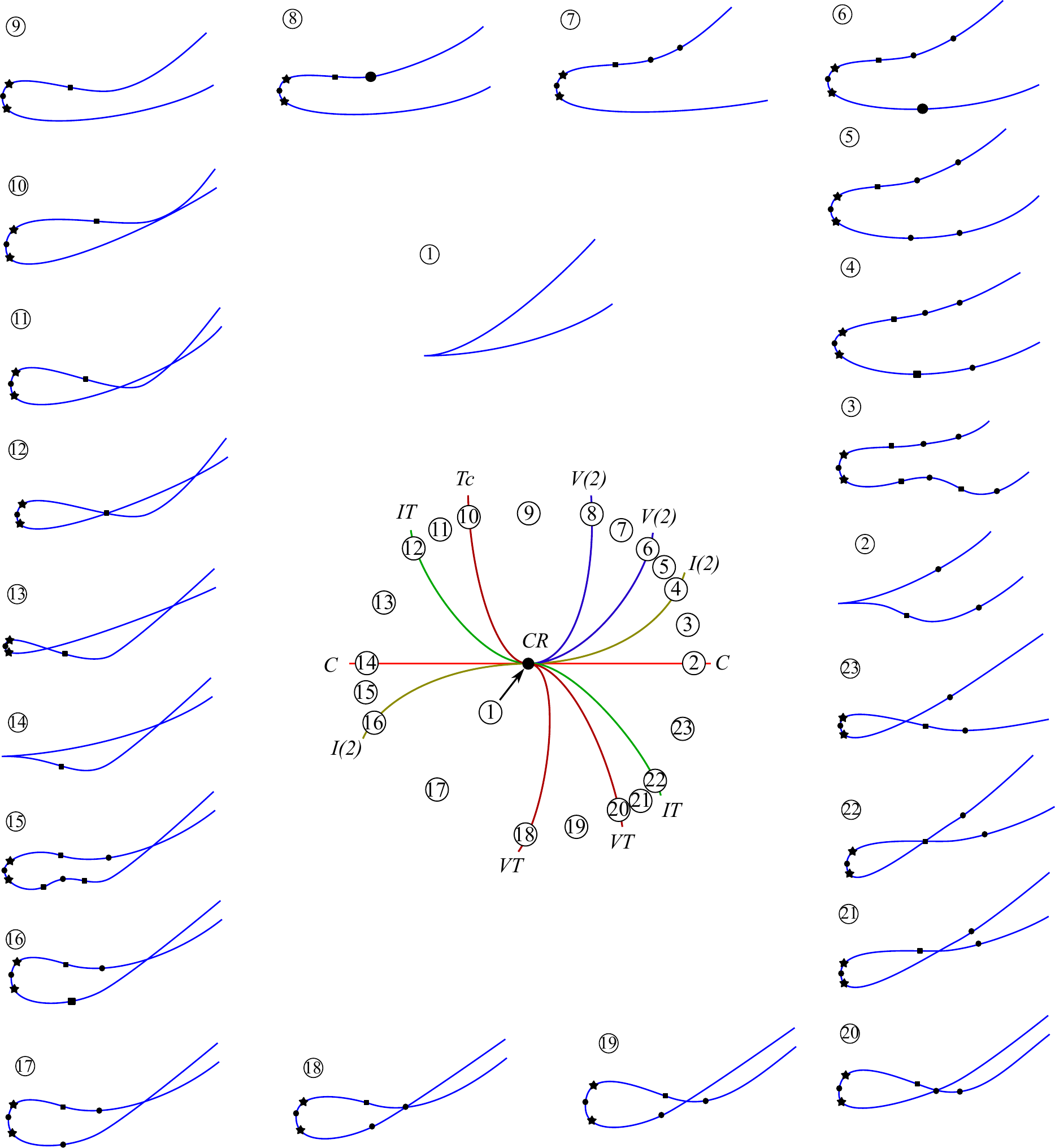}
\caption{\small{\textit{FRS}-generic deformations of a curve with ramphoid
cusp.}}\label{fig:def-cusp-ramph}
\end{figure}

\begin{figure}[h!]
\centering
\includegraphics[scale = 0.65]{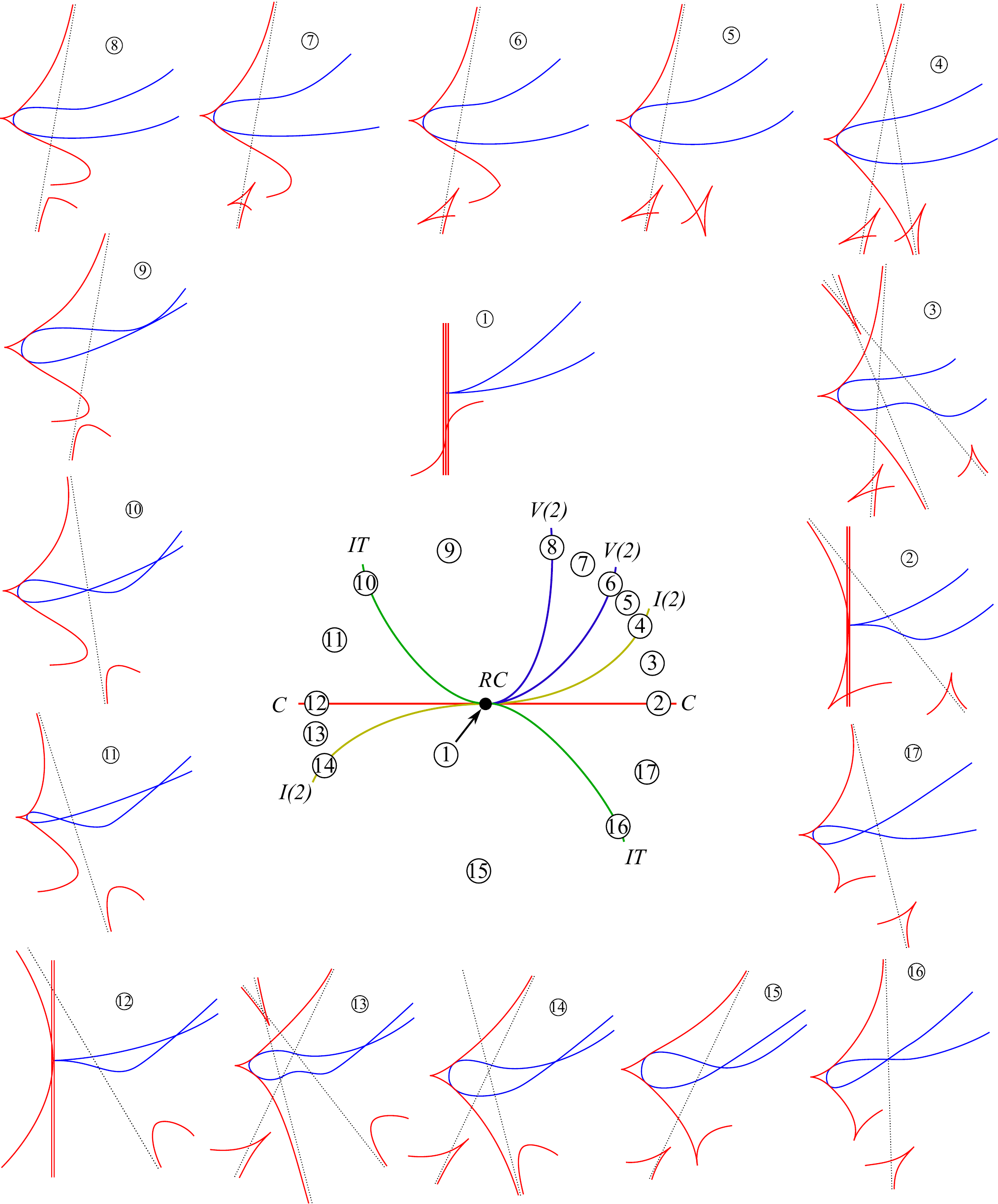}
\caption{\small{\textit{FRS}-generic deformations of the caustic of a curve with a
ramphoid cusp.}}\label{fig:def-cusp-ramph-evoluta}
\end{figure}

\begin{proof}

(i) The proof is similar to that of item (i) of the Theorem
\ref{teo:LC}. We obtain that the caustic of $\g$ is the union of the
line $u_1 = 0$ and a curve given by
$(\displaystyle\frac{5c_5}{8c_4}t^3+O(t^4),-\frac{1}{2c_4}+\frac{15c_5}{16c_4^2}t+O(t^2))$
as shown in Figure \ref{fig:def-cusp-ramph-evoluta}
$\textcircled{1}$.

(ii) The lightlike points of $\g_s$ are given by
$h^\pm(t,s) =\b_s^\prime(t)\pm 2t = 0$, which are regular. Note that $h^+$ and $h^-$
have only one zero each for each fixed $s\in\R^2,0$, given by $t_1
= -\frac{1}{2}s_2+O_2(s_1,s_2)$ and $t_2 =
\frac{1}{2}s_2+O_2(s_1,s_2)$, respectively. Therefore, $\g_s$ has
two lightlike points for all $(s_1,s_2)\in\R^2,0$, with $s_2\neq 0$.

For inflections we consider the numerator of $\kp_s$ given by $G(t,s) =
-2s_2 + 6s_1t^2+8\overline{c_4}(s)t^3 + 15\overline{c_5}(s)t^4 +
24\overline{c_6}(s)t^5 + O_s(t^6)$ which is a deformation of an
$A_2$ singularity. Thus $\g_s$ can have at most $3$ inflections.

For vertices we consider the numerator of $\kp_s^\prime$ given by
$H(t,s) =
2t\b_s^{\prime\prime\prime}(t)(-4t^2+\b_s^\prime(t)^2)-3(-2\b_s^\prime(t)
+
2t\b_s^{\prime\prime}(t))(-4t+\b_s^\prime(t)\b_s^{\prime\prime}(t))$
which is a deformation of an $A_4$ singularity. Then $\g_s$ can have
at most $5$ vertices.

To obtain the relative position of lightlike points, inflections and
vertices in $\g_s$ we use the results in sections $5$ and $7$ and the
fact that changes only occur when we cross a stratum in the parameters space in Theorem
\ref{teo:est-cusp-ramp}. We start by considering the configuration
of $\g_s$ on the cusp stratum, that is, when $s_2 = 0$.

In this case $h^+(t,s_1,0) = t[2 + 2\overline{c_2}(s_1) + 3s_1t +
4\overline{c_4}(s_1)t^2+O_{s_1}(t^3)]$ and $h^-(t,s_1,0) =
t[-2 + 2\overline{c_2}(s_1) + 3s_1t +
4\overline{c_4}(s_1)t^2+O_{s_1}(t^3)]$. Thus, $\g_{s}$ does not have
lightlike points outside the cusp point on the cusp stratum.

We have $G(t,s_1,0) = t^2( 6s_1 + 8\overline{c_4}(s_1)t +
15\overline{c_5}(s_1)t^2 + O_s(t^3))$, that is, $\g_{s}$ has two
inflections concentrated at the cusp point and another one given by
$t_0 = \displaystyle-\frac{3}{8c_4}s_1 + O(s_1^2)$.

For vertices, we have $H(t,s_1,0) =
12t^3(2s_1+O(s_1)s_1-10c_5t^2+O(s_1)t^2+O_{s_1}(t^3))$ which implies
that the vertices of $\g_{s}$ are given by $s_1 = 5c_5t^2+O(t^3)$.
Hence, we have $2$ vertices for $s_1>0$ and none for $s_1<0$.
Moreover, as $c_4$ and $c_5$ are fixed (and suppose, without loss of generality, to be positives), then
$\displaystyle\frac{3s_1}{8c_4} < \sqrt{\frac{s_1}{5c_5}}$, which
implies that the relative position of the relevant points is V-I-C-V for $s_1>0$ and C-I
for $s_1<0$. The first vertex is outward and the second is inward.
In fact, the curvature function is given by
$\displaystyle\kp_{s_1}(t) = \frac{6s_1t^2 +
O_{s_1}(t^3)}{|4t^2(-1+\overline{c_2}(s_1))+O_{s_1}(t^3)|^\frac{3}{2}}
= \frac{3s_1}{4}\frac{1}{|t|}( 1 + O_{s_1}(t))$ and as $s_1 > 0$,
the curvature function goes to infinity when $t$ tends to $0$. As
the inflections of $\g_s$ are zeros of $\kp_s$ and vertices are
maximum or minimum points of $\kp_s$, then we can conclude that the
vertices are minimum points of $\kp_s$ (that is,
$\kp_s^{\prime\prime}>0$) and, consequently, we get
$\kp_s\kp_s^{\prime\prime} <0$ at the first vertex and
$\kp_s\kp_s^{\prime\prime} >0$ at the second.

Going around the origin in the $(s_1,s_2)$-space and using the configuration on
the cusp stratum and the results in sections $5$ and $7$ we get the
configuration of $\g_s$ in each stratum as shown in Figure
\ref{fig:def-cusp-ramph}.

(iii) It follows from (ii) and the results in sections $5$ and $7$.

(iv) From Theorem \ref{teo:est-cusp-ramp}, the stratifications
in the parameters spaces of $\g_s$ and $\a_u$ are homeomorphic, see
Figure \ref{fig:def-cusp-ramph} center. Moreover, the calculations in (ii) and (iii) depend only on the fact that the curve has a ramphoid cusp $\mathcal{A}_h$-equivalent to
$(t^2,t^4+t^5)$ and the family is \textit{FRS}-generic, conditions that $\a_u$ satisfies.
\end{proof}




\section*{Acknowledgements}

%

The author was supported by the FAPESP doctoral grant 2015/16177-2. 

The author would like to thank his supervisor Farid Tari for the great teachings, useful discussions and valuable comments.

\let\thefootnote\relax\footnotetext{\\ Alex Paulo Francisco\\
Universidade Federal do Paran{\'a}, Campus Pontal do Paran{\'a} - CEM\\
Av. Beira-Mar, s/n, caixa postal 61, Pontal do Sul, Pontal do Paran{\'a} - PR, Brazil\\
Email: alexpf@ufpr.br}

\end{document}